\shorttitle{Sample‑Path Large Deviations for Functionals of PCP} 
\pgfplotsset{compat=1.17} 
\providecommand{\A}{}
\providecommand{\D}{}
\providecommand{\R}{}
\providecommand{\M}{}
\providecommand{\N}{}
\providecommand{\P}{}
\providecommand{\T}{}
\renewcommand{\A}{\mathbb{A}}
\renewcommand{\D}{\mathbb{D}}
\renewcommand{\R}{\mathbb{R}}
\renewcommand{\N}{\mathbb{N}}
\renewcommand{\M}{\mathbb{M}}
\renewcommand{\T}{\mathbb{T}}
\renewcommand{\P}{\mathbb{P}}
\newcommand{\norm}[1]{\left\lVert#1\right\rVert}
\newcommand*\diff{\mathop{}\!\mathrm{d}}
\newcommand{\E}[1]{{\mathbb E}\left[#1\right]}		
\newcommand{\V}[1]{{\text{Var}}\left[#1\right]}
\newcommand{\p}[1]{{\mathbb P}\left(#1\right)}
\newcommand{\I}[1]{{\mathbf 1}_{\{#1\}}}
\newcommand{\defeq}{\vcentcolon=}
\newcommand{\eqdef}{=\vcentcolon}
\newcommand\cB{\mathcal B}
\newcommand\cC{\mathcal C}
\newcommand\cF{\mathcal F}
\newcommand\cG{\mathcal G}
\newcommand\cI{\mathcal I}
\newcommand\cK{\mathcal K}
\newcommand\cM{\mathcal M}
\newcommand\cN{\mathcal N}
\newcommand\cO{\mathcal O}
\newcommand\cP{\mathcal P}
\newcommand\smallO{
  \mathchoice
    {{\scriptstyle\mathcal{O}}}
    {{\scriptstyle\mathcal{O}}}
    {{\scriptscriptstyle\mathcal{O}}}
    {\scalebox{.7}{$\scriptscriptstyle\mathcal{O}$}}
  }
 \newcommand\mfM{{\mathfrak{M}}} 
  \newcommand\mfK{{\mathfrak{K}}} 
\newcommand{\convv}{\xrightarrow{\text{\upshape\tiny v}}}
\begin{document}

\title{Sample-Path Large Deviations for Functionals of Poisson Cluster Processes} 

\authorone[LPSM, Sorbonne Université \& DO, Université de Lausanne]{Fabien Baeriswyl} 
\authortwo[LPSM, Sorbonne Université]{Olivier Wintenberger} 

\addressone{Quartier UNIL-Chamberonne, Bâtiment Anthropole, Bureau 3090.1, CH-1015 Lausanne} 
\emailone{fabien.baeriswyl@sorbonne-universite.fr} 

\addresstwo{LPSM (Bureau 15-16 206), Sorbonne Université, 4 place Jussieu, 75005, Paris, FRANCE} 
\emailtwo{olivier.wintenberger@sorbonne-universite.fr} 

\begin{abstract}
We establish sample-path large deviation principles for the centered cumulative functional of marked Poisson cluster processes in the Skorokhod space equipped with the $M_1$ topology, under joint regular variation assumptions on the marks and the offspring distributions governing the propagation mechanism. These findings can also be interpreted as hidden regular variation of the cluster processes’ functionals, extending the results in \cite{dtw22} to cluster processes with heavy-tailed characteristics, including mixed Binomial Poisson cluster processes and Hawkes processes. Notably, by restricting to the adequate subspace of measures on $\D([0,1], \R_+)$, and applying the correct normalization and scaling to the paths of the centered cumulative functional, the limit measure concentrates on paths with multiple large jumps. 
\end{abstract}

\keywords{point processes; regular variation; hidden regular variation}

\ams{60F10; 60G55}{60G70}    

\section{Introduction}\label{section:intro}

The study of regular variation, as an analytical concept describing the asymptotic behaviour of functions, originates in the early work of \cite{landau11, polya17}, and was thoroughly developed and formalised by Jovan Karamata in the 1930s (see \cite{karamata33}). Regular variation plays a crucial role in analytic number theory and modern probability theory, as emphasised in the monograph of \cite{bgt89}. In extreme value theory, it helps characterising the Fréchet maximum domain of attraction; see e.g. \cite{resnick87}, \cite{resnick07}. It also has important applications in risk theory and insurance mathematics, as underlined in \cite{ekm13}, \cite{mikosch09}.

In its simplest form, regular variation is defined as follows: let $X$ be an $\mathbb{R}^d$-valued random vector defined on a probability space $(\Omega, \mathcal{F}, \mathbb{P})$. $X$ is said to be regularly varying if there exists a sequence $\{a_n\}$, with $a_n \rightarrow \infty$, such that
$$ n \p{a_n^{-1} X \in \cdot} \convv \mu(\cdot), \quad \text{as } n \to \infty,$$
where the limiting measure $\mu(\cdot)$ is a Radon measure, i.e., a Borel measure assigning finite mass to compact sets of $\R^d$.  

The convergence is understood in the vague topology on $ \R^d \setminus \{\bf {0}\} $, excluding the origin due to the possible singularity at zero. The limit measure $\mu(\cdot)$ necessarily satisfies an $\alpha$-homogeneity property for some $\alpha > 0$: that is,
$$\mu(u \cdot) = u^{-\alpha} \mu(\cdot), \quad \text{for all } u > 0,$$ which justifies the polynomial decay rate quantifying tail sets, such as the complementary of $[0, r]^d$. This makes regular variation a natural tool for modelling the heavy-tailed behaviour of random variables and vectors.

It is natural to extend the notion of regular variation beyond simple random variables and vectors to more complex objects in infinite-dimensional spaces. For point measures, this was introduced as early as \cite{hl06}, who laid the foundational framework for $M_0$-convergence of measures on metric spaces. For stationary time series, this was studied in \cite{bs09} via the introduction of the tail process, and later in \cite{dhs18}, which clarified its relation to the tail measure and established a one-to-one correspondence. See also the comprehensive monographs \cite{ks20,mw24} for regularly varying time series. 

Regular variation of marked point processes is the main focus of \cite{dtw22}. Recently, a general formulation of regular variation on topological spaces has been proposed in \cite{bmm25}, which unifies and extends earlier approaches by identifying the key structural properties needed for regular variation in abstract settings.

One may ask whether it is possible to remove larger subsets of the state space than just the origin $\{\bf 0\}$, as originally introduced when dealing with $M_0$-convergence. The answer is positive, and this idea was first formalised in \cite{lrr14} for metric spaces, giving rise to the notion of hidden regular variation, which emerges when regular variation concentrated on the axes ``hides'', in fact, regular variation away from the axes at a finer scale. An early account of the phenomenon can be found in \cite{resnick02}, and for a modern textbook treatment with many examples, see \cite{resnick24}. Hidden regular variation of Lévy processes with regularly varying increments was developed in \cite{rbz19}, where connections with classical sample-path large deviations are made explicit. For Lévy processes with Weibull increments, see \cite{bbrz20}. In the context of dynamical systems with heavy-tailed perturbations, sample-path deviations are obtained in \cite{wr23}. Hidden regular variation of cluster sizes in multivariate Hawkes processes was obtained in \cite{blwz25}, where the authors study multitype branching representations and show how multiple large jumps can jointly contribute to extreme cluster sizes. In a very recent work, \cite{b2025}, building on \cite{blwz25}, establish a sample path large deviation principle for Lévy processes driven by multivariate heavy-tailed Hawkes processes. 

In this work, we extend the hidden regular variation framework developed in \cite{dtw22} to the setting of Poisson cluster processes. These processes are constructed from a base (immigration) Poisson point process, where each point may generate a random number of offspring points, forming a branching structure. The primary examples we consider are the mixed Binomial Poisson cluster process (with a single generation of offsprings) and the Hawkes process (with multiple generations); see Example~6.3 and the related discussions in \cite{dvj03,dvj08} for a thorough overview.

Our focus is on the regular variation properties of centered cumulative functionals associated with marked Poisson cluster processes, which can be interpreted as sample path large deviation principles. Our analysis is based on a joint regular variation assumption on the drivers of the propagation mechanism of the marked clusters. Building on the recent results of \cite{bcw23}, we establish a functional extension of the large deviations stated in Proposition~10 of that paper, specifically for partial sums of the marks of the Poisson cluster processes. This transition from scalar to functional settings introduces stronger assumptions on the distribution of offspring waiting times. However, these conditions also enable us to retrieve and extend some results in \cite{bcw23}, notably proving a non-functional version of Proposition~10 therein. 

These results contribute to the broader body of limiting results for Poisson cluster processes, an active topic for several decades. Early asymptotic results appear in \cite{westcott73}, while large deviation principles are developed in \cite{bt07}, and functional central limit theorems as well as moderate deviation principles are studied in \cite{gw20}. Central limit theorems for Poisson cluster processes were also studied in \cite{bwz19}, under a range of regular variation assumptions on their propagation mechanisms. 

Among Poisson cluster processes, the Hawkes process -- first introduced in \cite{hawkes71} -- has received particular attention due to its wide range of applications. In seismology, it models aftershock sequences (see \cite{ogata88}); in finance, it captures features like volatility clustering and extreme losses, with applications such as Value-at-Risk estimation (see \cite{cd05}). Large deviation principles have been derived for the linear Hawkes process in \cite{bt07}, extended to nonlinear variants in \cite{zhu13}, and to marked versions in \cite{kz15}. Functional limit theorems for multivariate Hawkes processes have been developed in \cite{bdh13}, providing Gaussian approximations useful for statistical inference.

The paper is structured as follows: in Section~\ref{section:not}, we introduce Poisson cluster processes, (hidden) regular variation in the space of measures, and Skorokhod’s $M_1$ topology. Section~\ref{section:aggregated} presents slight extensions of the regular variation principles established in \cite{dtw22} for Poisson point processes and their summation functionals, applying them to compound versions of the cluster processes. In Section~\ref{section:res}, we establish an $M_1$ approximation for a general centered cumulative functional of the cluster marks, leading to a sample path large deviation principle that can be interpreted as hidden regular variation. In Section~\ref{section:check}, we verify that the two main submodels of Poisson cluster processes from Subsection~\ref{section:intropp} satisfy the general assumptions and conditions identified in Section~\ref{section:res} for the regular variation properties to transfer to cluster processes. The proofs of Subsection~\ref{subsection:slutsky} and Section~\ref{section:res} are given in Sections~\ref{section:proof2} and \ref{section:proof4}, respectively.


\section{Notations, Measure Spaces, Processes of Interest, and Preliminaries}\label{section:not}

\subsection{General notations}
We write $f(x) = \smallO(g(x))$ if, for two functions $f(\cdot), g(\cdot)$, it holds that
$$
\lim_{x \rightarrow \infty} \frac{|f(x)|}{|g(x)|} = 0.
$$
We write $f(x) = \cO(g(x))$ if there exist $C > 0$ and $x_0$ such that $|f(x)| \le C |g(x)|$ for all $x \ge x_0$, and $f(x) \sim g(x)$ if
$$
\lim_{x \rightarrow \infty} \frac{f(x)}{g(x)} = 1.
$$

We write $X_n = \cO_p(a_n)$ for a sequence of random variables $\{X_n\}_{n \ge 1}$ if, for every $\varepsilon > 0$, there exist $C > 0$ and $n_0 \in \N$ such that
$$
\mathbb{P}(|X_n| > C a_n) < \varepsilon\,, \qquad \text{for all } n \ge n_0.
$$
We write $X_n = \smallO_p(a_n)$ if
$$
\frac{X_n}{a_n} \rightarrow 0 \text{ in probability}, \, \text{ as } n \rightarrow \infty.
$$

$\text{Leb}(\cdot)$ denotes the Lebesgue measure. $\delta_x(\cdot)$ denotes the Dirac measure at $x$, i.e. $\delta_x(A) = 1$ if $x \in A$ and $\delta_x(A) = 0$ otherwise. We let $(\Omega, \cF, \P)$ denote a general probability space, usually supporting all of our objects except if specified differently.

\subsection{Measure spaces and general point processes}\label{subsection:measpp}

We introduce measure spaces and point processes as in Section 2 in \cite{lrr14} and Section 2 in \cite{dtw22}, but we keep it brief and encourage readers to redirect to the aforementioned literature for more details. 

In full generality, we let $(E, \cB(E),  d)$ be a complete, separable metric space, where $\cB(E)$ is the Borel $\sigma-$algebra on $E.$ The open ball at $x$ of radius $r$ is denoted by $B_{x, r} \defeq \{y \in E: d(x, y) < r\}.$ For any closed set $F \subset E$, let $F^r \defeq \{x \in E: d(x, F) < r\},$ where $d(x, F) \defeq \inf_{y \in F} d(x, y).$ For a set $A \subset E$, $d(A, F) = \inf_{x \in A, y \in F} d(x, y). $  The class of bounded, continuous functions on $E$ is denoted by $\cC_b(E).$ Finite Borel measures on $E$ are denoted by $\M_b(E).$  $\cC(E \setminus F) \subset \cC_b(E \setminus F)$ is the subclass of bounded, continuous functions such that each element in $\cC(E \setminus F)$ vanishes on $F^r$ for some $r > 0$. $\M(E \setminus F)$ denotes the class of Borel measures on $E \setminus F$ whose restrictions to $E \setminus F^r$ are finite for each $r > 0.$ Convergence in $\M(E \setminus F)$ is defined as follows.
\begin{defn}[Section 2 in \cite{lrr14}]\label{def:convm}
    A sequence $\{\mu_n\}_{n \ge 1}$ converges to $\mu \in \M(E \setminus F)$ if, for all $f \in \cC(E \setminus F)$, $$\int_{E \setminus F} f(x)  \mu_n(\diff x) \rightarrow \int_{E \setminus F} f(x) \mu(\diff x), \text{ as } n \rightarrow \infty.$$ We write $\mu_n \rightarrow \mu$ in $\M(E \setminus F)$, as $n \rightarrow \infty.$ 
\end{defn}
\begin{rem}
      Let $\mu^{(r)}, \nu^{(r)}$ be the finite restrictions of the measures $\mu, \nu$ to $E \setminus F^{r}.$ In Theorem 2.2 in  \cite{lrr14}, the authors show that the metric $\rho_{\M(E \setminus F)}(\cdot, \cdot)$, defined by $$\rho_{\M(E \setminus F)}(\mu, \nu) \defeq \int_{0}^{\infty} e^{-r} \rho(\mu^{(r)}, \nu^{(r)}) \big(1+\rho(\mu^{(r)}, \nu^{(r)})\big)^{-1} \diff r,$$ metrizes convergence in $\M(E \setminus F)$, where $\rho(\cdot, \cdot)$ is the Prokhorov metric. 
\end{rem}

Given $E$ and a closed subset $F \subset E$, it is possible to define the closed subset $\cN(E \setminus F) \subset \M(E \setminus F)$ (abbreviated $\cN$ for brevity) of $\N$-valued measures that are point measures, i.e. if $\pi(\cdot) \in \cN(E \setminus F)$, then it admits representation $\pi(\cdot) = \sum_{i \in I} \delta_{x_i}(\cdot)$ for $I$ a countable index set, and $x_i \in E \setminus F.$ The set of point measures with at most $k$ points is denoted by $$\cN_k(E \setminus F) \defeq \Big\{\pi(\cdot) = \sum_{i=1}^p \delta_{x_i}(\cdot): 0 \le p \le k, \, x_1, \ldots, x_p \in E \setminus F \Big\}.$$

While we do not discuss this in full details, convergence in $\M(\cN(E \setminus  F) \setminus \cN_k(E \setminus F))$ -- written $\M(\cN \setminus \cN_k)$ for brevity -- which is convergence of Borel measures on the subset consisting of point measures with at least $k+1$ points, is characterised by the associated convergence of modified Laplace functionals of the point processes, see Theorem 2.5 in \cite{dtw22}. 

\subsection{Poisson cluster process and submodels}\label{section:intropp}
Poisson cluster processes are natural extensions of Poisson processes, in which events occur in clusters. For this family of processes, an immigration or background Poisson process triggers offspring processes, which can further -- in multi-generational models -- trigger new offsprings. We formally introduce the general marked Poisson cluster process, keeping the spirit of the presentation and notations from \cite{bwz19}. 

To do so, let the complete, separable metric space in Subsection~\ref{subsection:measpp} be specialised to $E \defeq \R_+ \times \A$, where $(\A, \cB(\A))$ is a measurable space for the marks endowed with its Borel $\sigma-$algebra $\cB(\A)$. We recall that a point process in $E$, defined on $(\Omega, \cF, \P)$, is a random point measure, i.e. a measurable map $N(\cdot): (\Omega, \cF, \P) \rightarrow (\cN(E), \cM_{\cN}(E))$, where $\cM_{\cN}(E)$ is the trace $\sigma-$algebra induced by $\cM(E)$, the $\sigma-$algebra generated by the integration maps $\mu \rightarrow \mu f \defeq \int f \diff \mu$, for nonnegative, measurable $f(\cdot)$ on $E$. 

We first introduce the temporal shift operator, which is an operator solely acting on the temporal coordinate of a marked point measure. 

\begin{defn}
    For any point measure $m(\cdot) = \sum_{n \in I} \delta_{t_n, x_n}(\cdot) \in \cN(E)$, for some countable index set $I$, the temporal-shift operator $\theta_t$ is defined by $$\theta_t m(\cdot) = \sum_{n \in I} \delta_{t_n + t, x_n}(\cdot).$$
\end{defn}

\begin{defn}\label{def:pcpp}
         Let $N(\cdot) \in \cN(E)$ be the marked point process, defined on $(\Omega, \cF, \P)$, as the superposition $$N(\cdot) \defeq \bigoplus_{i=1}^{\infty} (\delta_{\Gamma_i, A_{i0}}(\cdot) + \theta_{\Gamma_i}G_{A_{i0}}(\cdot)),$$ where 
    \begin{itemize}
        \item[1.] $N_0(\cdot) \defeq \sum_{i=1}^{\infty} \delta_{\Gamma_i, A_{i0}}(\cdot) \in \cN(E)$ is a marked, homogeneous Poisson point process in $E$, defined on $(\Omega, \cF, \P)$, with intensity measure $\nu_{N_0}(\cdot) \defeq (\lambda \text{Leb} \otimes F)(\cdot)$, for $\lambda > 0$, $\{A_{i0}\}_{i \ge 1}$ an i.i.d. sequence with common distribution $F(\cdot),$ independent from $\{\Gamma_i\}_{i \ge 1};$ and
        \item[2.] for each $i \ge 1,$ $G_{A_{i0}}(\cdot) \in \cN(E)$ is a marked point process in $E$, defined on $(\Omega, \cF, \P)$, with representation $$G_{A_{i0}}(\cdot) \defeq \sum_{j=1}^{K_i} \delta_{W_{ij}, A_{ij}}(\cdot),$$ and such that $K_i$ an $\N-$valued random variable with $\E{K_i} < \infty$, $\{W_{ij}\}_{i \ge 1, j \ge 1}$ is a nonnegative sequence of r.v.s., and $\{A_{ij}\}_{j \ge 0}$ is an i.i.d. sequence, with common distribution $F(\cdot)$. 
    \end{itemize}
    $N(\cdot) \in \cN(E)$ is called a Poisson cluster process in $E.$
\end{defn}

\begin{rem}
    The Poisson cluster process in Definition~\ref{def:pcpp} admits full representation $$N(\cdot) = \sum_{i=1}^{\infty} \sum_{j=0}^{K_i} \delta_{\Gamma_i + W_{ij}, A_{ij}}(\cdot),$$ where, by convention, $W_{i0} \defeq 0$ for each $i \ge 1.$ Its existence is a direct consequence of its construction, and the additional observation that $$\E{N([0, T] \times \A)} = \E{\sum_{i=1}^{\infty} \delta_{\Gamma_i, A_{i0}}([0, T] \times \A)} + \E{\sum_{i=1}^{\infty}\theta_{\Gamma_i} G_{A_{i0}}([0, T] \times \A)} < \infty$$ since the first term is simply a marked Poisson point process $N_0(\cdot)$ (hence, a locally finite point process) and the cluster part is finite by the (sufficient, see Section 6 in \cite{dvj03}) assumption on the cluster size. This implies that the process $N(\cdot)$ is a.s. finite. 
\end{rem}

\begin{rem}
    It is also possible to define the $i$th cluster point process $C_i(\cdot) \in \cN(E)$, on $(\Omega, \cF, \P)$, as $$C_i(\cdot) = \delta_{\Gamma_i, A_{i0}}(\cdot) + \theta_{\Gamma_i}G_{A_{i0}}(\cdot),$$ which helps to define, in the following subsections, the two submodels of interest in this work.  
\end{rem}

\subsubsection{Mixed Binomial Poisson Cluster Process.}

This model corresponds to Example 6.3a in \cite{dvj03}. Here, the clusters are only made up of an immigrant event potentially generating a stream of first-generation offsprings.  

\begin{defn}\label{def:mbpp}
    Let $N_{MB}(\cdot) \in \cN(E)$ be the Poisson cluster process, defined on $(\Omega, \cF, \P)$, in Definition~\ref{def:pcpp}, with its $i$th cluster point process $C_i(\cdot) \in \cN(E)$ defined by $$C_i(\cdot) = \delta_{\Gamma_i, A_{i0}}(\cdot) +  \theta_{\Gamma_i}G_{A_{i0}}(\cdot) = \delta_{\Gamma_i, A_{i0}}(\cdot) + \theta_{\Gamma_i}\sum_{j=1}^{K_{A_{i0}}} \delta_{W_{ij}, A_{ij}}(\cdot)$$ where 
    \begin{enumerate}
        \item[1.] $\left\{K_{A_{i0}}, \{W_{ij}\}_{j \ge 1}, \{A_{ij}\}_{j \ge 0}\right\}_{i \ge 1}$ is an i.i.d. sequence, such that (generically) $\E{K_A} < \infty$ and $\E{W}~<~\infty$;
        \item[2.] for each $i \ge 1$, $\{A_{ij}\}_{j \ge 1}$ is independent from both $K_{A_{i0}}$ and $\{W_{ij}\}_{j \ge 1};$
        \item[3.] for each $i  \ge 1$, the sequence $\{W_{ij}\}_{j \ge 1}$ is conditionally independent given $A_{i0}$, and conditionally independent from $K_{A_{i0}}$ given $A_{i0}$. 
    \end{enumerate}
    $N_{MB}(\cdot) \in \cN(E)$ is called a mixed Binomial Poisson cluster process in $E$. 
\end{defn}
\begin{rem}
    An important implicit consequence of the above definition is that the number of elements in the cluster $K_{A_{i0}}$ depends on the ancestral mark $A_{i0}$, as clearly emphasised in the notation of $K$. 
\end{rem}

\subsubsection{Hawkes process}
This model corresponds to Example 6.3c in \cite{dvj03}. Although the Hawkes process is classically defined through its conditional intensity function (see e.g. \cite{hawkes71, bm96}), qualifying as a point process with stochastic intensity in the sense e.g. of \cite{bremaud20}, we begin by presenting its from its branching structure perspective, originally established in \cite{ho74}. A key feature of the Hawkes process is its self-similarity: each point, whether an immigrant or an offspring, may itself act as an immigrant and generate its own offspring(s), leading to a multi-generational branching structure.

In this framework, the self-excitation mechanism is governed by a measurable fertility function denoted $h(\cdot, \cdot)$, that will be a regularly varying random variable in Section~\ref{section:check}. 

\begin{defn}\label{def:hawkesclus}
   Let $N_H(\cdot) \in \cN(E)$ be a Poisson cluster process, defined on $(\Omega, \cF, \P)$, as in Definition~\ref{def:pcpp}, with its $i$th cluster point process $C_i(\cdot) \in \cN(E)$ defined by $$C_i(\cdot) = \delta_{\Gamma_i, A_{i0}}(\cdot) + \theta_{\Gamma_i} G_{A_{i0}}(\cdot) = \delta_{\Gamma_i, A_{i0}}(\cdot) +  \theta_{\Gamma_i}\sum_{j=1}^{L_{A_{i0}}} \left(\delta_{W_{ij},A_{ij}}(\cdot) + \theta_{W_{ij}}G_{A_{ij}}(\cdot) \right)$$ where
    \begin{itemize}
        \item[1.] for each $i \ge 1,$ given $A_{i0}$, the first generation offspring process $N_{A_{i0}}(\cdot) \defeq \sum_{j=1}^{L_{A_{i0}}} \delta_{W_{ij}, A_{ij}}(\cdot) \in \cN(E)$ is an inhomogeneous Poisson process in $E$, defined on $(\Omega, \cF, \P)$, with intensity measure $$v_{N_{A_{i0}}}(\cdot) \defeq (h \text{Leb} \otimes F)(\cdot) $$ where $h(\cdot, \cdot)$ is a measurable fertility function such that $\E{\kappa_A} \defeq \int_0^{\infty} \E{h(s, A)} \diff s < 1$, and $F(\cdot)$ is a probability distribution on $\A$;
        \item[2.] $\{G_{A_{ij}}\}_{j \ge 1}$ is an i.i.d. sequence of point processes defined on $(\Omega, \cF, \P)$, conditionally independent from $N_{A_{i0}}(\cdot)$ given $\{A_{ij}\}_{j \ge 1}$, and distributed as $G_{A_{i0}}(\cdot).$
    \end{itemize}
    $N_H(\cdot) \in \cN(E)$ is called a linear marked Hawkes process in $E$. 
\end{defn}

\begin{rem}\label{rem:gw}
The condition $\E{\kappa_A} \defeq \int_0^{\infty} \E{h(s, A)} \diff s < 1$ guarantees that the clusters are a.s. finite. This is the so-called subcriticality condition in the Galton-Watson terminology -- which underpins the genealogical structure of the Hawkes process -- and is essential for the existence of the marked Hawkes process (see Section 6 in \cite{dvj03}). As mentioned above, the interpretation of the Hawkes process as a Galton–Watson process was first established in \cite{ho74}.
\end{rem}

\begin{rem}
    Note that the total cluster size of the $i$th cluster $K_i$ in this case is a sum of the form $$K_i = 1 + L_{A_{i0}} + \sum_{j=1}^{L_{A_{i0}}} L_{A_{i0j}} + \cdots$$ where $L_{A_{i0}}$ is the number of first-generation offsprings and, for each $j \ge 1$, $L_{A_{i0j}}$ is the number of first-generation offsprings of the $j$th first-generation offspring, and so on. 
\end{rem}

\subsection{(Hidden) Regular variation}
Following exposition in \cite{lrr14} and \cite{dtw22}, we introduce next regular variation and hidden regular variation as convergence in $\M(E \setminus F),$ see Subsection~\ref{subsection:measpp}. We first need the notions of scaling and cones. 
\begin{defn}[Section 2.2 in \cite{dtw22}]
    A scaling on a complete, separable metric space is a multiplication by positive scalars, acting from $(0, \infty) \times E$ to $E$ and such that 
    \begin{enumerate}
        \item $1x = x, $ for all $x \in E$;
        \item $u_1(u_2 x) = (u_1 u_2) x, $ for all $u_1, u_2 > 0$ and $x \in E$.
    \end{enumerate}
    A cone is a Borel subset $F \subset E$ such that, if $x \in F$, then $ux \in F$ for all $u >0.$ It is assumed that $F$ is a closed cone with 
    $$ d(x, F) < d(ux, F), \quad \text{for all } u > 1, \; x \in E \setminus F.$$
\end{defn}
\begin{rem}
    The scaling $T$ acts as a dilation of a set, that is, for $A \in \cB(E \setminus F)$, we denote by $TA$ the set $$TA \defeq \{Tx: x \in A \}.$$ 
\end{rem}

We recall the definition of regular variation for sequences, random elements in $E \setminus F$, and measures on $\M(E \setminus F),$ upon recalling Definition~\ref{def:convm}:

\begin{defn}[Definition 2.2 in \cite{dtw22}]\label{def:rv}
    \begin{enumerate}[label=(\alph*)]
    Let $(E, \cB(E), d)$ be a complete, separable metric space.
        \item\label{def:rv-1} A sequence $v(\cdot)$ is regularly varying with index $\alpha > 0$ if, for all $\delta > 0$,  $$\frac{v(\delta T)}{v(T)} \sim \delta^{\alpha}, \text{ as } T \rightarrow \infty.$$ 
        \item\label{def:rv-2} $X: (\Omega, \cF, \P) \rightarrow (E, \cB(E))$ is regularly varying on $E \setminus F$ with index $\alpha > 0$ if there exist a regularly varying sequence $v(\cdot)$ with index $\alpha > 0$, a scaling $T$, and a non-null measure $\mu(\cdot) \in \M(E \setminus F)$ such that $$v(T) \p{T^{-1} X \in \cdot} \rightarrow \mu(\cdot) \text{ in } \M(E \setminus F),\, \text{ as } T \rightarrow \infty. $$ We denote it by $X \in \text{RV}(E \setminus F, v(\cdot), \mu).$
        \item A measure $\nu(\cdot) \in \M(E \setminus F)$ is regularly varying with index $\alpha > 0$ if there exist a regularly varying sequence $v(\cdot)$ with index $\alpha > 0$, a scaling $T$, and a non-null measure $\mu(\cdot) \in \M(E \setminus F)$ such that $$v(T) \nu(T \cdot) \rightarrow \mu(\cdot) \text{ in } \M(E \setminus F),\, \text{ as } T \rightarrow \infty.$$ We denote it by $\nu \in \text{RV}(E \setminus F, v(\cdot), \mu).$  
    \end{enumerate}
\end{defn}
\begin{rem}
    Alternatively, for the $E-$valued random element $X$ in Definition~\ref{def:rv}\ref{def:rv-2}, if $E = \R$, and for sets of the form $(x, \infty)$, it is well known that the tail of $X$ may be written as an asymptotic equivalence of the form $\p{X > (Tx, \infty)} \sim (Tx)^{-\alpha} \ell_X(Tx), \text{ as } T \rightarrow \infty$, for $\ell_X(\cdot)$ a slowly varying function; hence, considering Definition~\ref{def:rv}\ref{def:rv-1}, it essentially means that $v(T) \p{X > (Tx, \infty)} \sim x^{-\alpha} \ell_X(Tx)/\ell_X(T) = \cO(1),$ as $T \rightarrow \infty,$ a fact we will repeatedly use in the proofs. 
\end{rem}
\begin{rem}\label{rem:rvvec}
    If the $E$-valued element considered in Definition~\ref{def:rv}\ref{def:rv-2} is a nonnegative random vector $(X,Y)$, then by Theorem 1.1 in \cite{bdm02} all linear combinations $t_1 X + t_2 Y$ are also (potentially) regularly varying, for $t_1, t_2 \in \R_+$. In fact, positively homogeneous maps (such as sums, projections on coordinates,...) are regularly varying, provided that the relevant subspace is charged by the limiting measure $\mu(\cdot)$, see e.g. Subsection 3.2.5.2 in \cite{mw24}. This will be used in the proofs of the results of Section~\ref{section:res}, where these considerations are fully detailed in the specific settings of regularly varying components for the processes defined in Section~\ref{section:intropp}. 
\end{rem}
 

\begin{defn}[Section 2.2 in \cite{dtw22}]\label{def:hrv}
    Let $X$ be an $E-$valued random element. $X$ has hidden regular variation of order $k \ge 1$ with index $\alpha_k > 0$ if there exists a regularly varying sequence $v(T)$ with index $\alpha > 0$ (and hence, $v(\cdot)^k$ is regularly varying with index $k\alpha > 0$) such that $$v(T)^k \p{T^{-1} X \in \cdot} \rightarrow \mu_{k}(\cdot) \text{ in } \M(E \setminus F_k), \, \text{ as } T \rightarrow \infty,$$ for some cone $F_k$ belonging to an increasing family $\{F_k\}_{k \ge 1}$, and with $\mu_k(\cdot)$ disjoint from $\mu_l(\cdot)$ for $k \neq l$.
\end{defn}

\subsection{Skorokhod \texorpdfstring{$M_1$}{M1} topology on \texorpdfstring{$\D([0,1], \R)$}{D([0,1], R)}}\label{m1}

Let $\D([0,1], \R)$ be the space of $\R-$valued, right-continuous with left-hand limits functions, referred to as càdlàg functions. Heuristically, the classical $J_1$ topology on $\D([0,1], \R)$ considers two càdlàg functions to be close if their discontinuities (or jumps) occur at nearly the same times and have similar magnitudes. For the cluster point processes of interest in this work, we need to go beyond this framework and consider the $M_1$ topology, first introduced in \cite{skorohod56} alongside the $J_1$ topology. A comprehensive account of the $M_1$ topology can be found in the monograph of \cite{whitt2002}; here, we only recall its construction and main features. Since the additive functionals considered in Section~\ref{section:res} are $\mathbb{R}-$valued, we do not distinguish between the weak and strong $M_1$ topologies as defined in \cite{whitt2002}, as they coincide in this specific case.

The key advantage of $M_1$ over the $J_1$ topology is its greater tolerance for how jumps are matched up: two càdlàg functions $x, y \in \D([0,1], \R)$ may be considered close not only if their jump times line up, but also if their jump sizes are slightly different. In particular, a single large jump in one process can be approximated, under $M_1$, by a cluster of smaller jumps in the other.

\begin{defn}[Equation (3.3) in \cite{whitt2002}]
    Let $x \in \D([0,1], \R)$. The graph $G_x$ of $x$ is defined by $$G_x \defeq \{(t, z) \in [0,1] \times \R : z \in [x(t-), x(t)]\}.$$
\end{defn}
\begin{rem}[Section 3.3 in \cite{whitt2002}]\label{rem:order}
    One can define a (total) order on $G_x$: for $(t_1, z_1), (t_2, z_2) \in G_x$, we say $(t_1, z_1) \le (t_2, z_2)$ if either
\begin{enumerate}
    \item $t_1 < t_2$; or
    \item $t_1 = t_2$ and $|x(t_1-) - z_1| \le |x(t_2-) - z_2|$.
\end{enumerate}
\end{rem}
\begin{defn}[Section 3.3 in \cite{whitt2002}]
    A parametric representation of $x \in \D([0,1], \R)$ is a continuous, nondecreasing function $(r, u)$ that maps $[0,1]$ onto $G_x$, where the order is to be understood in the sense of Remark~\ref{rem:order}. 
\end{defn}
In the above definition, the $r(\cdot)$ function, which is continuous from $[0,1]$ to $[0,1]$, is referred to as the temporal part of the parametric representation, while $u(\cdot)$, which is continuous from $[0,1]$ to $\R$, is referred to as its spatial part. 
\begin{defn}[Equation (3.4) in \cite{whitt2002}]
    Let $\cP(x)$ be the set of all parametric representations of $x \in \D([0,1], \R)$. The $M_1$ distance $d_{M_1}(\cdot, \cdot)$ between $x_1, x_2 \in \D([0,1], \R)$ is defined by  
\begin{equation}\label{eq:m1dist}
d_{M_1}(x_1, x_2) = \inf_{(r_j, u_j) \in \cP(x_j),\, j = 1,2} \{\|r_1 - r_2\|_{\infty} \vee \|u_1 - u_2\|_{\infty} \}.
\end{equation}
\end{defn}
\begin{rem}
    $d_{M_1}(\cdot, \cdot)$ is a proper metric, and a modification of it makes $\D([0,1], \R)$ a complete, separable metric space. Also, $d_{M_1} \le d_{J_1}$, where $d_{J_1}$ is Skorokhod $J_1$ distance. See Chapter 12 of \cite{whitt2002} for further details and properties. 
\end{rem}

Finally, we will denote by $\D_k([0,1] \times \R) \subset \D([0, 1]\times \R)$ the subset of càdlàg functions with at most $k$ discontinuities. Borel measures on the space of càdlàg processes with at least $(k+1)$ jumps ($\D([0,1], \R) \setminus \D_k([0,1] \times \R)$) will be denoted for simplicity by $\M(\D \setminus \D_k)$.

\subsection{Preliminary Lemma}\label{subsection:slutsky}

We introduce next a version of Slutsky's Theorem, for elements in $\M(E \setminus F)$, which is a counterpart of Proposition 2.4 in \cite{dtw22}. 

\begin{lem}\label{lem:slutsky}
    Let $(E,d)$ be a complete, separable metric space and let $Y_T^{(1)}$ and $Y_T^{(2)}$ be $E$-valued random variables. Let $F \subset E$ be a closed cone. Assume that $Y_T^{(1)} \in \text{RV}(E \setminus F, v(\cdot), \mu)$. 
    Suppose further that, for each $\epsilon > 0, r > 0,$ \begin{equation}\label{eq:lemc1}\limsup_{T \rightarrow \infty} v(x_T)\p{d(x_T^{-1} Y_T^{(1)}, x_T^{-1} Y_T^{(2)} ) > \epsilon, d(x_T^{-1} Y_T^{(1)}, F)>r} = 0\end{equation} and \begin{equation}\label{eq:lemc2}\limsup_{T \rightarrow \infty} v(x_T)\p{d(x_T^{-1} Y_T^{(1)}, x_T^{-1} Y_T^{(2)} ) > \epsilon, d(x_T^{-1} Y_T^{(2)}, F)>r} = 0.\end{equation} Then $Y_T^{(2)} \in \text{RV}(E \setminus F, v(\cdot), \mu)$, or, equivalently, $$v(x_T) \p{x_T^{-1}Y_T^{(2)} \in \cdot} \rightarrow \mu(\cdot) \text{ in } \M(E \setminus F), \text{ as } T \rightarrow \infty.$$
\end{lem}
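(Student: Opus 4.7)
The plan is to test convergence in $\M(E \setminus F)$ against bounded Lipschitz functions $f: E \to \R$ that vanish on $F^{r}$ for some $r > 0$. By a standard mollification argument, any $g \in \cC(E \setminus F)$ can be approximated uniformly by such Lipschitz functions vanishing on a slightly smaller $F^{r/2}$-neighbourhood (so that this subclass is convergence-determining for $\M(E \setminus F)$; cf.\ the discussion around Theorem~2.2 in \cite{lrr14}). Fix then a Lipschitz $f$ with constant $L$ and $\|f\|_\infty \le M$, vanishing on $F^r$. Since $Y_T^{(1)} \in \text{RV}(E \setminus F, v(x_T), \mu)$, we have $v(x_T) \E{f(x_T^{-1} Y_T^{(1)})} \to \int f\,d\mu$, and the claim reduces to
$$\Delta_T \defeq v(x_T) \E{\bigl|f(x_T^{-1} Y_T^{(2)}) - f(x_T^{-1} Y_T^{(1)})\bigr|} \longrightarrow 0.$$

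Because $f$ vanishes on $F^r$, the integrand is zero unless at least one of $x_T^{-1} Y_T^{(1)}, x_T^{-1} Y_T^{(2)}$ lies outside $F^r$; call this the support event $S_T$. I would split $\Delta_T$ according to whether $D_T \defeq d(x_T^{-1} Y_T^{(1)}, x_T^{-1} Y_T^{(2)})$ is at most a small $\epsilon > 0$ or not. On $\{D_T \le \epsilon\} \cap S_T$, the Lipschitz estimate gives integrand $\le L\epsilon$, contributing $L\epsilon \cdot v(x_T) \p{S_T}$. On $\{D_T > \epsilon\} \cap S_T$, the integrand is at most $2M$ while the probability is majorised by the sum of the two events appearing in \eqref{eq:lemc1} and \eqref{eq:lemc2}; hence this ``far'' contribution tends to $0$ by hypothesis.

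The main technical obstacle is to bound $v(x_T) \p{S_T}$ uniformly in $T$. The term $v(x_T) \p{d(x_T^{-1} Y_T^{(1)}, F) \ge r}$ converges, along continuity levels of $\mu$, to $\mu(\{d(\cdot, F) \ge r\}) < \infty$ by the assumed regular variation of $Y_T^{(1)}$. For the analogous $Y_T^{(2)}$-term, which is precisely what we wish to establish and cannot be invoked yet, I would exploit the reverse triangle inequality $|d(x,F) - d(y,F)| \le d(x,y)$ to obtain the inclusion
$$\{d(x_T^{-1} Y_T^{(2)}, F) \ge r\} \subset \{d(x_T^{-1} Y_T^{(1)}, F) \ge r/2\} \cup \{D_T > r/2,\, d(x_T^{-1} Y_T^{(2)}, F) \ge r\}.$$
The first set carries bounded mass under $v(x_T)\p{\cdot}$ by the regular variation of $Y_T^{(1)}$; the second vanishes under $v(x_T)$ by \eqref{eq:lemc2} applied with the threshold $r/2$. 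Hence $\limsup_T v(x_T) \p{S_T} \le C(r) < \infty$, and the close-event contribution is of order $L\epsilon \cdot C(r)$.

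Sending $T \to \infty$ first and then $\epsilon \to 0$ yields $\Delta_T \to 0$, from which $v(x_T) \E{f(x_T^{-1} Y_T^{(2)})} \to \int f\,d\mu$ and the conclusion follows. The genuine subtlety throughout is the asymmetry of the hypotheses: regular variation is only assumed on the $Y_T^{(1)}$ side, and the bridge between $Y_T^{(1)}$ and $Y_T^{(2)}$ provided by \eqref{eq:lemc1}--\eqref{eq:lemc2} must be used twice — once to eliminate the ``far'' contribution, and once, via the triangle inequality step above, to transfer finiteness of the support-event mass from $Y_T^{(1)}$ to $Y_T^{(2)}$ before that regular variation is available.
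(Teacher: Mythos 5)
You prove the same statement by a genuinely different route. The paper's proof is a direct adaptation of Theorem~3.1 in \cite{billingsley99} and Proposition~2.4 in \cite{dtw22}: it works at the level of $\mu$-continuity sets $B\in\cB(E\setminus F^r)$, establishes matching $\limsup$ and $\liminf$ bounds for $v(x_T)\p{x_T^{-1}Y_T^{(2)}\in B}$, passes from the thickened set $B^\epsilon$ (respectively the thinned cone $E\setminus F^{r+\epsilon}$) to $B$ (respectively $E\setminus F^r$) by monotone convergence in $\epsilon$, and uses condition~\eqref{eq:lemc2} to dispose of the far contribution in the upper bound and condition~\eqref{eq:lemc1} for the lower one. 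You instead test against bounded Lipschitz functions vanishing on a neighbourhood of $F$ (a convergence-determining subclass via the Portmanteau theorem in \cite{lrr14} together with an infimal-convolution mollification), collapse both portmanteau inequalities into a single estimate $\Delta_T = v(x_T)\E{|f(x_T^{-1}Y_T^{(2)}) - f(x_T^{-1}Y_T^{(1)})|}\to 0$, and split on the distance $D_T$ being small or large. Your approach buys a one-sided argument at the price of the preliminary reduction to Lipschitz test functions; it also correctly identifies, and circumvents with the $1$-Lipschitz property of $d(\cdot,F)$, the asymmetry of the hypotheses: to control $v(x_T)\p{d(x_T^{-1}Y_T^{(2)},F)\ge r}$ before $Y_T^{(2)}$ is known to be regularly varying, you transfer the finite mass across via $Y_T^{(1)}$ and kill the residual with condition~\eqref{eq:lemc2} at a shrunken threshold. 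Two small points worth tightening: the support event $S_T$ and the cone levels should be taken with strict inequalities to match the hypotheses exactly, or one should observe that the hypotheses hold for every $r>0$ so a shrunken threshold may always be inserted; and when invoking $v(x_T)\p{d(x_T^{-1}Y_T^{(1)},F)\ge r}\to\mu(\{d(\cdot,F)\ge r\})$ you should either fix a continuity level of $\mu$, or simply use the one-sided $\limsup$ bound against the closed set $\{d(\cdot,F)\ge r'\}$ for $r'<r$, which is all the argument requires. With those adjustments, your proof is sound and correct.
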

\begin{proof}
    The proof of this lemma is a standard adaptation of the proof of Theorem 3.1 in \cite{billingsley99}, and is relegated to Section~\ref{secp:slutsky}.
\end{proof}

\begin{rem}
    The above lemma applies to general metrics $d(\cdot, \cdot);$ in our setting, it will be used for $d = d_{M_1}$, Skorokhod's $M_1$ distance introduced in Subsection~\ref{m1}. Specifically, it allows us to show that, if we can establish convergence for a càdlàg functional $Y_T^{(1)}(\cdot)$ in $\M(\D \setminus \D_k)$ -- where $Y_T^{(1)}$ is a measurable mapping of a point process for which we have convergence in  $\M(\cN \setminus \cN_k)$ -- then the same convergence also holds for another càdlàg functional $Y_T^{(2)}(\cdot)$ in $\M(\D \setminus \D_k)$, provided that $Y_T^{(2)}$ is sufficiently close to $Y_T^{(1)}$, in the $M_1$ sense; this holds even if we do not have convergence of the underlying point process driving $Y_T^{(2)}$ in $\M(\cN \setminus \cN_k)$, which is precisely what we need. 
\end{rem}

\section{Aggregating the cluster process}\label{section:aggregated}

In this section, we state some slight extensions of known results from \cite{dtw22}, considering aggregated versions of the Poisson cluster submodels considered in Subsection~\ref{section:intropp}. More specifically, we superpose the clusters at their immigrant starting points; formally, we let $\Pi_T(\cdot) \in \cN([0, 1] \times \R_+)$ be the compound Poisson process, defined on $(\Omega, \cF, \P)$, by \begin{equation}\label{eq:pit}\Pi_T(\cdot) \defeq \sum_{i=1}^{N_0(T)} \delta_{\Gamma_i/T,\, D_i} (\cdot),\, \text{ for } T > 0, \text{ with } D_i \defeq \sum_{j=0}^{K_i} f(A_{ij}) \eqdef \sum_{j=0}^{K_i} X_{ij},\end{equation}
with $K_i$ the total cluster size of the $i$th cluster, and $f(\cdot): \A  \rightarrow \R_+$ is a measurable function transporting the marks to $\R_+$. Because $N_0(T) \defeq N_0([0,T] \times \R_+)$ is a Poisson random variable, the following two results are standard, and found in general form in the literature (see e.g. \cite{gut09}): 
\begin{enumerate}
    \item[] \textbf{Fact 1.} The sequence of random measures on $[0,1]$ $$\frac{1}{\E{N_0(T)}} \sum_{i=1}^{N_0(T)} \delta_{\Gamma_i / T}(\cdot) \rightarrow  \text{Leb}(\cdot)\Big|_{[0,1]} \text{ in probability, as } T \rightarrow \infty,  $$ for the Prohorov metric. 
    \item[] \textbf{Fact 2.} The family $\{(N_0(T)/T)^{k+1}\}_{T \ge 0}$ is uniformly integrable for each $k \ge 0$.
\end{enumerate}
\begin{rem}\label{rem:space}
    In the terminology of Section~\ref{section:intro}, and because we now consider marked point processes with generic marks $D$ in $\R_+$, we note that the underlying space over which our random objects live is $E \setminus F \equiv (\R_+ \times \R_+) \setminus (\R_+ \times \{0\}).$ In what follows, $$\M\big(\cN(E \setminus F) \setminus \cN_k(E \setminus F)\big) \equiv \M\Big(\cN\big((\R_+ \times \R_+) \setminus (\R_+ \times \{0\})\big) \setminus \cN_k\big((\R_+ \times \R_+) \setminus (\R_+ \times \{0\})\big)\Big) \eqdef \M(\cN \setminus \cN_k)$$ and  $$\M\big(\D([0,1], E \setminus F) \setminus \D_k([0,1], E \setminus F)\big) \equiv \M\Big(\D\big([0,1], \R_+ \setminus \{0\}\big) \setminus \D_k\big([0,1], \R_+ \setminus \{0\}\big)\Big) \eqdef \M(\D \setminus \D_k).$$
\end{rem}

For our first result to hold, we need to make two assumptions. First, we require a generic sum of the marks $D$ in Equation~\eqref{eq:pit} to be regularly varying: 
\begin{assumption}\label{assumption1}
    $D \in \text{RV}(\R_+ \setminus \{0\}, v(\cdot), \mu),$ with index $\alpha > 1$. 
\end{assumption}
\begin{rem}
    Note that the limiting measure $\mu(\cdot)$ admits a density with respect to the Lebesgue measure, i.e. that $\mu(\diff x) = \alpha x^{-\alpha - 1}$ for $\alpha > 1$, since $D$ lives on $\R_+.$
\end{rem}

In addition to Assumption~\ref{assumption1} above, we suppose that the scaling $T \equiv x_T$ in Definition~\ref{def:rv} -- referred later as the large deviation scaling -- and the speed of regular variation has a specific order, that depends on the regular variation index of $D:$

\begin{assumption}\label{assumption2}
    The scaling in Definition~\ref{def:rv} satisfies $$x_T^{-1} = \smallO(T^{-\max(1/\alpha,\, 1/2)}), \text{ as } T \rightarrow \infty, \text{ for } \alpha > 1.$$  
\end{assumption}
\begin{rem}
\begin{enumerate}
    \item Assumption~\ref{assumption2} indicates that we allow for a broad class of scalings $x_T$, not only linear scalings in $T$, thus covering a wide range of asymptotic regimes.
    \item Accordingly, the speed induced by the choice of the scaling $x_T$ is denoted by $v(x_T)$. In what follows, we let $v'(x_T) \defeq v(x_T)/T$ and it follows by Assumption~\ref{assumption2} that $v'(x_T) \rightarrow \infty, $ as $T \rightarrow \infty.$
\end{enumerate}
\end{rem}

The next result is an adaptation of Theorem 3.4 in \cite{dtw22}, proving (hidden) regular variation of the aggregated process in Equation~\eqref{eq:pit}: 

\begin{prop}[Theorem 3.4 in \cite{dtw22}]\label{prop:convpp}
    Consider the sequence of marked point processes $\{\Pi_T(\cdot)\}_{T \ge 0}$ in Equation~\eqref{eq:pit}. Suppose Assumptions~\ref{assumption1} and~\ref{assumption2} hold. Then, for each $k \ge 0$, $$v'(x_T)^{k+1} \p{x_T^{-1} \Pi_T \in \cdot} \rightarrow \mu^{*}_{k+1}(\cdot) \text{ in } \M(\cN \setminus \cN_k), \text{ as } T \rightarrow \infty, $$ where, for Borel $B \in \cB(\cN \backslash \cN_k)$, $$\mu_{k+1}^{*}(B) \defeq \frac{1}{(k+1)!} \int_{([0,1]\times \R_+)^{k+1}} \I{ \sum_{i=1}^{k+1}\delta_{(t_i, y_i) \in B } } \lambda^{k+1}\diff t_1 \cdots \diff t_{k+1}\mu(\diff y_1)\cdots \mu(\diff y_{k+1}).$$
\end{prop}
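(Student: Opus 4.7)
The strategy is to recognise $\Pi_T$ as a marked Poisson point process on $[0,1] \times \R_+$ whose intensity measure equals $\lambda T\,\text{Leb}|_{[0,1]} \otimes F_D$, where $F_D$ denotes the distribution of $D = \sum_{j=0}^{K} f(A_j)$. The clusters $\{C_i\}$ in Definition~\ref{def:pcpp} are i.i.d., so the marks $\{D_i\}$ are i.i.d.\ as well, and Assumption~\ref{assumption1} ensures these marks are regularly varying with index $\alpha > 1$. This places the statement squarely in the compound-Poisson framework already treated by Theorem~3.4 of \cite{dtw22}, of which the present proposition is a direct adaptation.

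Concretely, I would appeal to Theorem~2.5 of \cite{dtw22} to reduce convergence in $\M(\cN \setminus \cN_k)$ to convergence of the $k$-th modified Laplace functional. For a test function $g \in \cC(E \setminus (\R_+ \times \{0\}))$ supported away from $\{y = 0\}$, the Poisson structure yields the closed form
$$
\E{e^{-(x_T^{-1}\Pi_T)\,g}} = \exp\bigl(-\lambda T\, \Phi_T(g)\bigr), \qquad \Phi_T(g) = \int_0^1\!\!\int_{\R_+} \bigl(1 - e^{-g(s,\, y/x_T)}\bigr)\, ds\, F_D(dy).
$$
Assumption~\ref{assumption1} gives $v(x_T)\Phi_T(g) \to \int_0^1 \int (1 - e^{-g(s,u)})\, ds\, \mu(du)$, while Assumption~\ref{assumption2} forces $v'(x_T) = v(x_T)/T \to \infty$, so that $\lambda T\,\Phi_T(g) \sim \lambda/v'(x_T) \to 0$. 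Expanding the exponential and subtracting the Taylor polynomial up to order $k$ leaves $\sum_{l \geq k+1}(-\lambda T\Phi_T(g))^l/l!$, and multiplication by $v'(x_T)^{k+1}$ isolates the leading $l = k+1$ term whose limit equals
$$
\frac{(-1)^{k+1}}{(k+1)!} \Bigl(\lambda \int_0^1\!\!\int_{\R_+} \bigl(1 - e^{-g(s,u)}\bigr)\, ds\, \mu(du)\Bigr)^{k+1}.
$$
A direct factorial-moment identification recognises this expression as the $k$-th modified Laplace functional of the symmetrised $(k+1)$-fold product measure $\mu^*_{k+1}$ displayed in the statement.

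The main technical obstacle is the uniform control of the Taylor tail: all terms of order $l \geq k+2$ must vanish after the rescaling by $v'(x_T)^{k+1}$, which is where Assumption~\ref{assumption2} is used quantitatively, since each additional power contributes a factor $\sim 1/v'(x_T) \to 0$. Fact~2 on the uniform integrability of $(N_0(T)/T)^{k+1}$ justifies the interchange of limits and infinite sums, while Fact~1 on the convergence of the empirical arrival measure isolates the Lebesgue factor in the temporal coordinate. Once these controls are in place, the remaining combinatorics amount to a standard symmetrisation of a $(k+1)$-fold Poisson product, parallel to the manipulations in the proof of Theorem~3.4 of \cite{dtw22}.
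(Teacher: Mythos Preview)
Your proposal is correct and aligns with the paper's own treatment: the paper simply notes that the defining quantities in $\Pi_T$ satisfy the hypotheses of Theorem~3.4 in \cite{dtw22} and omits the argument entirely, whereas you have sketched what that adapted proof actually looks like via the Poisson Laplace functional and the modified-Laplace characterisation (Theorem~2.5 in \cite{dtw22}). One small remark: in the direct Laplace-functional route you outline, the exact exponential form $\exp(-\lambda T\Phi_T(g))$ already integrates out $N_0(T)$, so Facts~1 and~2 are not strictly needed there---they become essential only in the general (non-Poisson) framework of \cite{dtw22}, which is why the paper records them.
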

\begin{proof}
    The proof is a direction adaptation of the one of Theorem 3.4 in \cite{dtw22}, since the defining quantities in Equation~\eqref{eq:pit} satisfy all required assumptions therein, and is omitted for brevity. 
\end{proof}
\begin{rem}
\begin{enumerate}
    \item  Note that the scaling $x_T$ in Proposition~\ref{prop:convpp} only acts on the space component of $\Pi_T(\cdot)$, i.e. $x_T^{-1} \Pi_T(\cdot) = \sum_{i=1}^{N_0(T)} \delta_{\Gamma_i/T,\, x_T^{-1} D_i}(\cdot).$
    \item The result for $k=0$ in the case of the Poisson process was proved in Theorem A.1 of \cite{dhs18}.
\end{enumerate}
\end{rem}

We define the centered cumulative functional of $\Pi_T(\cdot)$, for $t \in [0,1]$, as a càdlàg process $\tilde{R}_T \in \D([0,1], \R)$ -- on $(\Omega, \cF, \P)$ -- by
\begin{equation}\label{eq:rttilded}
    \tilde{R}_T(t) \defeq \sum_{i=1}^{N_0(tT)} \left( D_i - \E{D} \right) \I{\Gamma_i/T \le t}, 
    \quad \text{for } T > 0.
\end{equation}

It is possible to prove that a slight extension of Theorem 4.3 in \cite{dtw22} holds for Equation~\eqref{eq:rttilded}, therefore showing a (hidden) regular variation principle for the cumulative functional. 

\begin{prop}\label{prop:rvd}
     Suppose Assumption~\ref{assumption1} and~\ref{assumption2} above hold. Then, for $k \ge 0,$ $$v'(x_T)^{k+1} \p{x_T^{-1} \tilde{R}_T \in \cdot} \rightarrow \mu_{k+1}^{\#}(\cdot) \text{ in } \M(\D \setminus \D_k), \text{ as } T \rightarrow \infty,$$ where, for Borel $B \in \cB(\D \setminus \D_k)$, $$\mu_{k+1}^{\#}(B) \defeq \frac{1}{(k+1)!} \int_{([0,1]\times \R_+)^{k+1}} \I{ (\sum_{i=1}^{k+1} y_i\I{t_i\le t})_{0\le t\le 1} \in B }  \lambda^{k+1}\diff t_1 \cdots \diff t_{k+1} \mu(\diff y_1)\cdots \mu(\diff y_{k+1}).$$
\end{prop}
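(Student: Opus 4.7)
The plan is a three-step truncation-plus-Slutsky argument: first, lift the point-process convergence of Proposition~\ref{prop:convpp} to convergence of a truncated summation functional via the continuous mapping theorem; second, verify through Lemma~\ref{lem:slutsky} that the centered cumulative functional is close to this truncated surrogate; third, let the truncation threshold tend to zero via a double-limit argument.

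\emph{Step 1 (Continuous mapping).} For each $\delta > 0$, introduce the summation functional $S^\delta \colon \cN \to \D$ defined on $\pi = \sum_i \delta_{(t_i, y_i)}$ by $S^\delta(\pi)(t) \defeq \sum_i y_i\, \I{y_i > \delta,\; t_i \le t}$. Applied to $x_T^{-1}\Pi_T$ (recall the scaling acts on marks only), this yields
\[
Y_T^{(1),\delta}(t) = x_T^{-1}\sum_{i \le N_0(tT)} D_i\, \I{D_i > \delta x_T}.
\]
The map $S^\delta$ is $M_1$-continuous at any point measure whose time coordinates are pairwise distinct and whose marks avoid the level $\delta$. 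Since the limit measure $\mu_{k+1}^{*}$ from Proposition~\ref{prop:convpp} charges only such configurations (the time marginals are absolutely continuous and the mark marginals have a power-law density under $\mu$), the continuous mapping theorem for $\M(\cdot\setminus\cdot)$-convergence gives
\[
v'(x_T)^{k+1}\, \p{Y_T^{(1),\delta} \in \cdot} \;\to\; \mu_{k+1}^{\#,\delta}(\cdot) \quad \text{in } \M(\D \setminus \D_k),
\]
where $\mu_{k+1}^{\#,\delta}$ is defined exactly as $\mu_{k+1}^{\#}$ but with each $\mu(\diff y_i)$ restricted to $(\delta,\infty)$. Monotone convergence then gives $\mu_{k+1}^{\#,\delta} \uparrow \mu_{k+1}^{\#}$ in $\M(\D \setminus \D_k)$ as $\delta \downarrow 0$.

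\emph{Step 2 (Slutsky approximation).} Setting $Y_T^{(2)} := x_T^{-1}\tilde R_T$, $\mu^- := \E{D\I{D \le \delta x_T}}$, and $M(\delta) := \E{D\I{D > \delta x_T}}$, the discrepancy decomposes as
\[
Y_T^{(2)}(t) - Y_T^{(1),\delta}(t) = -x_T^{-1}\!\!\sum_{i \le N_0(tT)}\!\!\big(D_i\I{D_i \le \delta x_T} - \mu^-\big) \;-\; \mu^- x_T^{-1}\big(N_0(tT) - \lambda t T\big) \;-\; x_T^{-1}N_0(tT)\, M(\delta).
\]
By Karamata's theorem $M(\delta) \sim \tfrac{\alpha}{\alpha-1}\delta x_T\, \p{D > \delta x_T}$, so the third term is a uniform drift of order $T M(\delta)/x_T = \cO(1/v'(x_T))$. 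The first two are centered compound-Poisson-type c\`adl\`ag martingales in $t$ whose running suprema are controlled by Doob's inequality combined with Rosenthal's inequality at moment order $p > 2(k+1)$, together with Karamata's bounds on the truncated moments of $D$. By the thinning property of the Poisson process, the large- and small-mark components of $\Pi_T$ are independent, so the conditioning event $\{d_{M_1}(Y_T^{(1),\delta}, \D_k) > r\}$ appearing in~\eqref{eq:lemc1}--\eqref{eq:lemc2} (of probability $\cO(\delta^{-(k+1)\alpha} v'(x_T)^{-(k+1)})$ by Proposition~\ref{prop:convpp}) factors away from the small-mark error; combining with the moment bounds yields the required decay at rate $v'(x_T)^{k+1}$, and Lemma~\ref{lem:slutsky} transfers the convergence from $Y_T^{(1),\delta}$ to $Y_T^{(2)}$ for each fixed $\delta$.

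\emph{Step 3 and main obstacle.} Steps 1 and 2 together yield $v'(x_T)^{k+1}\p{Y_T^{(2)} \in \cdot} \to \mu_{k+1}^{\#,\delta}$ in $\M(\D \setminus \D_k)$ for every fixed $\delta > 0$; since the left-hand side is independent of $\delta$ and $\mu_{k+1}^{\#,\delta} \uparrow \mu_{k+1}^{\#}$, a standard double-limit lemma for $\M$-convergence closes the argument. I expect the hardest step to be the Slutsky verification in Step 2 at the sharp rate $v'(x_T)^{-(k+1)}$: a naive second-moment Chebyshev bound on the small-mark error is sharp only when $k = 0$, and for $k \ge 1$ one must either invoke Rosenthal's inequality at an order $p$ depending on $k$ (or a Bernstein-type exponential concentration for the bounded centered summands) and then carefully absorb the resulting heavy-tail remainder $\cO(\delta^{p-\alpha}/v'(x_T))$ using the Poisson-thinning independence between large and small marks; balancing these two contributions via the slowly varying Karamata asymptotics for truncated heavy-tailed moments of $D$ is the most delicate point of the proof.
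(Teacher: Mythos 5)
The paper's proof of this proposition is only a one-line remark deferring to Theorem 4.3 of \cite{dtw22} with the random number of terms $N_0(tT)$ replacing a deterministic one; the operative technique is laid out in the paper's own Proposition~\ref{prop:t1}, which approximates $\tilde R_T$ on the risk event $\{d_{M_1}(x_T^{-1}\tilde R_T,\D_k)>r\}\subseteq\{D_{(N-k)}>2rx_T\}$ by the trajectory of its $(k+1)$ largest jumps, and then controls the centered residual via an order-statistics permutation argument, Etemadi's maximal inequality, and a truncation at $\delta x_T$ chosen small relative to $r$. Your overall intuition -- large jumps carry the limit, small centered jumps are negligible via Karamata-type bounds and Poisson independence of mark sizes -- is aligned with that strategy.

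However, the fixed-$\delta$ truncation-plus-Slutsky structure you propose has a genuine gap. For a fixed $\delta>0$, the Slutsky condition~\eqref{eq:lemc2} of Lemma~\ref{lem:slutsky} must hold for \emph{all} $r>0$, yet it fails when $r<\delta/2$: on the event that exactly $k+1$ of the marks $D_i$ fall in $(2r x_T,\delta x_T]$ and all others are $\le 2rx_T$, the process $x_T^{-1}\tilde R_T$ is still at $M_1$-distance of order $r$ from $\D_k$, while $Y_T^{(1),\delta}$ is identically $0\in\D_0\subset\D_k$ and the discrepancy is $\gtrsim r$. This event has $v'(x_T)^{k+1}$-scaled probability bounded away from $0$, so~\eqref{eq:lemc2} does not vanish. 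Relatedly, your Step~3 cannot be salvaged as written: if Step~2 really transferred the $\M(\D\setminus\D_k)$-convergence to $x_T^{-1}\tilde R_T$ with limit $\mu_{k+1}^{\#,\delta}$ for every fixed $\delta$, this would force $\mu_{k+1}^{\#,\delta_1}=\mu_{k+1}^{\#,\delta_2}$ (limits in the metrizable space $\M(\D\setminus\D_k)$ are unique), which is false because the two measures disagree on paths whose smallest jump lies in $(\delta_1,\delta_2)$, and such paths are bounded away from $\D_k$. The repair is to couple $\delta$ to $r$: first fix a test function vanishing on $\D_k^r$, bound the risk event by $\{D_{(N-k)}>2rx_T\}$ via Lemma~6.1 of \cite{dtw22}, compare $\tilde R_T$ to its top $(k+1)$ jumps on that event, and only then truncate the residual at $\delta x_T$ with $\delta<2r$, letting $\delta\downarrow 0$ \emph{after} $T\to\infty$ for the fixed $r$. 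Your ingredients (Doob/Fuk--Nagaev-Rosenthal for the centered small-mark sum, Karamata asymptotics for truncated moments, Poisson thinning) then all fit into that framing, and it is what Proposition~\ref{prop:t1} of the paper carries out explicitly.
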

\begin{proof}
    The proof is a straightforward adaptation of the one of Theorem 4.3 in \cite{dtw22}, except that the number of terms in $\tilde{R}_T$ is random ($N_0(tT)$) in our case. The complete proof is omitted for brevity. 
\end{proof}
\begin{rem}
\begin{itemize}
    \item  Note that, from the product form of the tail measure $\mu_{k+1}^{\#}(\cdot), $ the index of regular variation of $\tilde{R}_T(\cdot)$ is given by $(k+1)\alpha,$ for $k \ge 0.$ This means that, as we remove larger and larger subparts of $\D$ as $k$ grows, the heavy-tailedness of the additive functional decreases proportionally. 
    \item From Remark~\ref{rem:space}, the underlying space on which $\mu(\cdot)$ lives is $\R_+ \setminus \{0\}$, while Equation~\eqref{eq:rttilded} is $\mathcal{D}([0,1], \mathbb{R})$-valued. This should not be surprising, since Proposition~\ref{prop:rvd} is a sample-path large deviation principle and, asymptotically, the centering term becomes negligible. See Remark~\ref{rem:samplepath}. 
\end{itemize} 
\end{rem}

\section{Unfolding the aggregated cluster process}\label{section:res}

Equipped with the results from Section~\ref{section:aggregated}, we now aim to show that an analogue of Proposition~\ref{prop:rvd} holds for Poisson cluster processes. However, to extend the proof of Proposition~\ref{prop:convpp} to more complex processes, and to apply a continuous mapping theorem in order to derive a version of Proposition~\ref{prop:rvd} in our case, it is essential that the processes under consideration satisfy the uniform integrability condition stated in Fact~2 of Section~\ref{section:aggregated}. 

Recall that $N_H(\cdot) \in \cN(E)$ denotes the Hawkes process defined in Definition~\ref{def:hawkesclus}, and set $N_H(T) \defeq N_H([0,T])$. While the family $\{ N_H(T) / \E{N_H(T)} \}_{T \ge 0}$ is known to be uniformly integrable -- as a consequence of results from Chapters~1 and~2 in \cite{gut09} -- it remains an open question whether the family $\{ (N_H(T) / \E{N_H(T)})^{k+1} \}_{T \ge 0}$ is uniformly integrable for each $k \ge 0$. 

As a result, although Proposition~\ref{prop:convpp} can be extended to Poisson cluster processes in the case $k = 0$, it does not hold in general for $k \ge 1$, and we cannot obtain a version of Proposition~\ref{prop:rvd} directly in this setting. As noted in the introduction, the recent contribution by \cite{b2025} appears to circumvent this problem in the case of multivariate Hawkes processes.

In our case, we proceed via an indirect approach. Specifically, we apply Lemma~\ref{lem:slutsky} together with Proposition~\ref{prop:rvd} to establish convergence in $\M(\D \setminus \D_k)$ for the càdlàg process $\hat{R}_T(\cdot) \in \D([0,1], \R)$ defined, for $t \in [0,1]$, and on $(\Omega, \cF, \P)$, by \begin{equation}\label{eq:rthatz} \hat{R}_T(t) \defeq \sum_{i=1}^{N_0(tT)} \sum_{j=0}^{\mfK_i} Z_{ij} \I{(\Gamma_i + W_{ij})/T \le t} - \E{\sum_{i=1}^{N_0(tT)} \sum_{j=0}^{\mfK_i} Z_{ij} \I{(\Gamma_i + W_{ij})/T \le t }}, \end{equation} where, for each $i \ge 1$ and $j \ge 0$, the precise form of the sequence of i.i.d. nonnegative random variables ${Z_{ij}}$ and the sequence of i.i.d. $\N_0$-valued random variables ${\mfK_i}$ depends on the two submodels of interest introduced in Section~\ref{section:intropp}; detailed specifications are deferred to Section~\ref{section:check}. For the remainder of the section, we work under this general formulation, subject to additional assumptions stated below. The verification of these assumptions for each submodel is again deferred to Section~\ref{section:check}.

\begin{rem}
    The definition of $\hat{R}_T(\cdot)$ in Equation~\eqref{eq:rthatz} presupposes implicitely that, for each $i \ge 1$ and $D_i$ in Equation~\eqref{eq:rttilded}, if one is willing to use $\hat{R}_T(\cdot)$ as an approximation of $\tilde{R}_T(\cdot)$, it is possible to decompose, for $t \in [0,1], \, T > 0,$ 
\begin{equation}\label{eq:decomp}
    D_i \I{\Gamma_i/T \le t} = \sum_{j=0}^{\mfK_i} Z_{ij} \I{(\Gamma_i + W_{ij})/T \le t}.
\end{equation}
\end{rem}

In this section, our objective is to show that Equation~\eqref{eq:rthatz} provides a good approximation of Equation~\eqref{eq:rttilded} in the $M_1$ topology. We proceed in several steps. The first goal is to demonstrate that the process $\tilde{R}_T(\cdot)$, defined in Equation~\eqref{eq:rttilded}, can be approximated by the trajectory formed by its $(k+1)$ largest jumps, provided that we focus on the appropriate risk scenario at the $k$-th step. This scenario is controlled by intersecting the approximation with the event $\{ d_{M_1}(x_T^{-1} \tilde{R}_T, \D_k) > r \}$ for all $r > 0$.

We maintain the hypothesis that $D$ satisfies Assumption~\ref{assumption1} from Section~\ref{section:aggregated}, with a scaling sequence also fulfilling Assumption~\ref{assumption2} from the same section. Additionally, let $N_0(\cdot) \in \cN(E)$ denote the Poisson process defined in Section~\ref{section:aggregated}.

\begin{rem}
    In what follows, for each $i \in \{1, \ldots, N_0(T)\}$, we denote by by $D_{(N-i)}$ the $(N_0(T)-i)$th largest order statistics of the exchangeable sequence $\{D_{i}\}_{1 \le i \le N_0(T)}$. For simplicity, $N \defeq N_0(T)$ in the subscripts of the statements of the propositions below, and in their proofs in Section~\ref{section:proof4}.
\end{rem}
\newpage

\begin{prop}\label{prop:t1}
    Let $\{D_{i}\}_{i \ge 1}$ be an i.i.d sequence satisfying Assumption~\ref{assumption1}, with scaling sequence $x_T$ satisfying Assumption~\ref{assumption2}, independent from $N_0$. Then it holds, for each $k \ge 0$, and all $\epsilon, r > 0$, that $$v'(x_T)^{k+1}\p{d_{M_1}\Big(x_T^{-1}\tilde{R}_T,\,  x_T^{-1} \sum_{i=0}^{k} D_{(N-i)} \I{\Gamma_{(N-i)}/T \le t}\Big) > \epsilon,\,  d_{M_1}(x_T^{-1}\tilde{R}_T, \D_k) > r}= \smallO(1), \text{ as } T \rightarrow \infty.$$
\end{prop}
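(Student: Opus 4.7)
The plan is to bound the $M_1$ distance by the uniform one, split the residual path on an event that separates a controlled number of large jumps from a deterministically truncated centered compound-Poisson process, and finally invoke a Bennett / Fuk--Nagaev maximal inequality on that truncated process. Since $d_{M_1}(f,g)\le\|f-g\|_\infty$ for any càdlàg $f,g$, it suffices to show $v'(x_T)^{k+1}\p{\|\tilde R_T-T_{(k+1)}\|_\infty>\epsilon x_T}=\smallO(1)$, where $T_{(k+1)}(t)\defeq\sum_{i=0}^k D_{(N-i)}\I{\Gamma_{(N-i)}/T\le t}$; the intersection with $\{d_{M_1}(x_T^{-1}\tilde R_T,\D_k)>r\}$ can be dropped by inclusion and is retained in the statement only to mark the hidden regular-variation regime being probed. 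A direct expansion yields
\[
\tilde R_T(t)-T_{(k+1)}(t)=\sum_{i:\,D_i<D_{(N-k)}}(D_i-\E{D})\I{\Gamma_i\le tT}-\E{D}\,\#\{j\in\{0,\ldots,k\}:\Gamma_{(N-j)}\le tT\},
\]
whose last term is deterministically bounded by $(k+1)\E{D}=\smallO(x_T)$.

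Next, fix a small $\eta>0$ to be tuned later and split on the event $A_{T,\eta}\defeq\{\#\{i\le N_0(T):D_i>\eta x_T\}\le k+1\}$. On $A_{T,\eta}^c$ there are at least $k+2$ jumps exceeding $\eta x_T$, so Proposition~\ref{prop:convpp} applied with $k+1$ in place of $k$ to the slice $[0,1]\times(\eta,\infty)$ yields $\p{A_{T,\eta}^c}=\cO(v'(x_T)^{-(k+2)})$, whose product with $v'(x_T)^{k+1}$ vanishes. On $A_{T,\eta}$, the random index set $\{i:D_i<D_{(N-k)}\}$ is contained in $\{i:D_i\le\eta x_T\}$ together with at most $k+1$ indices belonging to the top $(k+1)$ but whose values nevertheless lie below $\eta x_T$; these extra indices contribute at most $(k+1)(\eta x_T+\E{D})$ to the uniform norm, which stays below $\epsilon x_T/4$ as soon as $\eta<\epsilon/(4(k+1))$.

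It remains to show that $v'(x_T)^{k+1}\p{\sup_{t\in[0,1]}|M_T(t)|>\epsilon x_T/4}=\smallO(1)$, where $M_T(t)\defeq\sum_{i=1}^{N_0(tT)}(D_i-\E{D})\I{D_i\le\eta x_T}$. Up to a deterministic drift of order $T\,\E{D\I{D>\eta x_T}}=\cO(T x_T^{1-\alpha}\ell(x_T))=\smallO(x_T)$ (by Karamata's theorem together with Assumption~\ref{assumption2}), $M_T$ is a pure-jump centered martingale whose jumps are bounded by $\eta x_T+\E{D}$ and whose predictable quadratic variation at time $1$ satisfies $\langle M_T\rangle_1=\cO(T\E{D^2\I{D\le\eta x_T}})=\cO(T x_T^{(2-\alpha)\vee 0}\ell(x_T))$. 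A Freedman / Bennett maximal inequality then produces a bound of the form $\p{\sup_{t\in[0,1]}|M_T(t)|>\epsilon x_T/4}=\cO(v'(x_T)^{-\epsilon/(C\eta)})$ for a constant $C=C(\alpha,\lambda,\E{D})>0$. The main obstacle is precisely this step: one must choose $\eta$ small enough that the Bennett exponent $\epsilon/(C\eta)$ strictly exceeds $k+1$, while also respecting the constraint $(k+1)\eta<\epsilon/4$ from the preceding paragraph; this is always achievable by taking $\eta$ of order $\epsilon/(k+1)$ with a sufficiently small absolute constant, upon which the two negligible contributions combine to give the claimed $\smallO(1)$ estimate.
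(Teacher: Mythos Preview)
Your opening move---dropping the intersection event $\{d_{M_1}(x_T^{-1}\tilde R_T,\D_k)>r\}$ on the grounds that it is ``only to mark the regime''---is not innocuous; the unconditional statement you then set out to prove is in general \emph{false}. Take $\alpha>2$ (so Assumption~\ref{assumption2} reads $x_T/\sqrt T\to\infty$) and let $x_T=\sqrt{T}(\log T)^{1/4}$, $k=0$. On the event $\{D_{(N)}\le\eta x_T\}$ (which has probability tending to $1$), the residual $\tilde R_T-D_{(N)}\I{\Gamma_{(N)}/T\le\cdot}$ differs from $\tilde R_T$ itself by at most $\eta x_T+\E{D}$; and $\tilde R_T(1)$ is a centred sum of $\sim\lambda T$ i.i.d.\ terms with finite variance, whose moderate-deviation probability of exceeding $\epsilon x_T$ is of order $\exp(-c\,x_T^2/T)=\exp(-c\sqrt{\log T})$. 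Since $v'(x_T)\sim T^{\alpha/2-1}(\log T)^{\alpha/4}$ here, $v'(x_T)\exp(-c\sqrt{\log T})\to\infty$. Thus $v'(x_T)\p{\|\tilde R_T-D_{(N)}\I{\cdot}\|_\infty>\epsilon x_T}$ does not vanish, and your strengthened target fails. The intersection event is precisely what excludes this moderate-deviation scenario: via Lemma~6.1 of \cite{dtw22} it forces $D_{(N-k)}>2r x_T$, so one is never on $\{D_{(N)}\le\eta x_T\}$.

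The same counterexample shows that your Bennett/Freedman estimate $\p{\sup_t|M_T(t)|>\epsilon x_T/4}=\cO(v'(x_T)^{-\epsilon/(C\eta)})$ is not correct for $\alpha>2$. With jump bound $b=\eta x_T$ and variance $V=\cO(T)$, Bennett gives an exponent proportional to $-(\epsilon/\eta)\log(x_T^2/T)$, i.e.\ a bound of order $(x_T^2/T)^{-\epsilon/(C\eta)}$; this is \emph{not} a power of $v'(x_T)$ when $x_T^2/T$ grows slowly (it equals $(\log T)^{1/2}$ in the example above, while $v'(x_T)$ grows polynomially in $T$). Your claimed bound happens to hold for $\alpha\in(1,2)$ because there $V\asymp T x_T^{2}\p{D>\eta x_T}$ and $ab/V\asymp v'(x_T)$, but it breaks for $\alpha>2$. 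The paper's proof avoids all of this by retaining the event $\{D_{(N-k)}>2r x_T\}$: after the probability-integral-transform/permutation reduction and Etemadi's inequality, the residual bound $CN_0(T)\p{D>\delta x_T}$ (plus, for $\alpha>2$, a Fuk--Nagaev exponential term handled via uniform integrability of $N_0(T)/T$) is multiplied by $\p{D_{(N-k)}>2rx_T\mid N_0(T)}\le N_0(T)^{k+1}\p{D>2rx_T}^{k+1}$, and the resulting product is $\cO(T/v(x_T))=\smallO(1)$. You cannot replicate that cancellation once the intersection event is discarded.
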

\begin{proof}
    The proof follows the lines of the proof of Theorem 4.3 in \cite{dtw22}, and is relegated to Subsection~\ref{app:pt1}. 
\end{proof}



Our next result uses the decomposition in Equation~\eqref{eq:decomp} to show that, in the $M_1$ topology, the trajectory of the $(k+1)$ largest jumps of the sequence $\{D_i\}_{i \ge 1}$ is well approximated by the combined trajectories of the corresponding $(k+1)$ decompositions, restricted to events observed within the time window $[0, T]$, for $T > 0$.

To this end, we first formulate additional assumptions on the quantities involved in Equation~\eqref{eq:rthatz}.

\begin{assumption}\label{assumption3}  
For each $i \ge 1$, the pair $(\mfK_i, \{W_{ij}\}_{j \ge 1})$ is a generally dependent random vector, where:
\begin{enumerate}
    \item $\mfK_i$ is an $\N$-valued random variable, with $\p{\mfK_i > x} \sim c \p{D_i > x}, $ as $x \rightarrow \infty$, for $c \in [0, \infty)$;
    \item $\{W_{ij}\}_{j \ge 1}$ is a sequence of nonnegative random variables, conditionally independent given some $\sigma-$algebra $\cF_{FG,i}$, with $\E{W} < \infty$. It is further conditionally independent of $\mfK_i$ given $\cF_{FG,i}$. 
\end{enumerate}
\end{assumption}
\begin{rem}
\begin{enumerate}
    \item Assuming $\p{\mfK_i > x} \sim c \p{D_i > x}, $ as $x \rightarrow \infty$, for $c \in [0, \infty)$ essentially means, by Assumption~\ref{assumption1}, that for each $i \ge 1$, $\mfK_i$ is either potentially regularly varying, with the same or a greater tail index than $D_i$, or that it is negligible in front of $D_i.$
    \item By convention, we set $W_{i0} \defeq 0$ for all $i \ge 1$, and we consider the extended sequence $\{W_{ij}\}_{j \ge 0}$.
    \item The notation $\cF_{FG,i}$ for this $\sigma$-algebra emphasises that the corresponding events are independent within the first generation, conditional on the events of the previous generation.  
\end{enumerate}
\end{rem}

\begin{assumption}\label{assumption4}  
For each $i \ge 1$, $\{Z_{ij}\}_{j \ge 0}$ is an independent sequence of nonnegative random variables, with, for each $j \ge 1$, $\p{Z_{ij} > x} \sim c \p{D_i > x}, $ as $x \rightarrow \infty,$ for $c \in [0, \infty).$
\end{assumption}

We need the following definition of the ``remainder terms'' after a time truncation $T > 0.$
\begin{defn}\label{def:remaindert}
    For a fixed $T>0$, the remainder term of the $i$th cluster, consisting of events occurring after time $T$ but triggered by points in $[0,T]$, is defined by \begin{equation}\label{eq:remaindert} D_i^{>T} \defeq \sum_{j=1}^{\mfM_i} Z_{ij} \I{Q_{ij} > T - \Gamma_i}\end{equation} where, for each $i \ge 1$, $\{Z_{ij}\}_{j \ge 1}$ is an i.i.d. sequence of nonnegative random variables, $(\mfM_i, \{Q_{ij}\}_{j \ge 1})$ is a generally dependent random vector, with $\mfM_i$ an $\N-$valued random variable and $\{Q_{ij}\}_{j \ge 1}$ is a sequence of nonnegative random variables. 
\end{defn}

About the remainder term of Equation~\eqref{eq:remaindert}, we formulate the following assumption. 

\begin{assumption}\label{assumption5}  
For quantities in Equation~\ref{eq:remaindert}, it holds, for each $i \ge 1$, that
\begin{enumerate}
    \item $\mfM_i$ satisfies $\p{\mfM_i > x} \sim c \p{D_i > x},$ as $x \rightarrow \infty$, for $c \in [0, \infty);$
    \item $\mfM_i$ and $\{Q_{ij}\}_{j \ge 1}$ are conditionally independent given some $\sigma-$algebra $\cF_{CI, i}$;
    \item $\{Q_{ij}\}_{j \ge 1}$ is a conditionally independent sequence given $\sigma-$algebra $\cF_{CI, i}$. 
\end{enumerate}

Furthermore, for each $i \ge 1, j \ge 1$, it holds that 
$$\p{Q_{ij} > T - \Gamma_i \mid \cF_{CI,i}} = \smallO(1) \text{ a.s.}, \quad \text{ as } T \to \infty.$$
\end{assumption}

\begin{rem}
\begin{enumerate}\label{rem:sigci}
    \item Assumption~\ref{assumption5} controls the duration of the $i$th cluster, preventing it from growing unboundedly as $T \rightarrow \infty$. This is a crucial condition when $W$ may be arbitrarily heavy-tailed; see Remark~2 in \cite{mr05}. It is in particular essential for the $M_1$ approximation in the forthcoming Proposition~\ref{prop:t2} to hold. 
    \item $\cF_{CI,i}$ ensures that the sequence $\{Q_{ij}\}_{1 \le j \le I}$, for some index set $I \subseteq \{1,2, \ldots, K_i\}$, is conditionally i.i.d. It is particularly needed in the proof of Proposition~\ref{prop:t2}, and the exact form of $\cF_{CI,i}$ for each submodel is given in Section~\ref{section:check}. 
\end{enumerate}
\end{rem}

    

The following assumption concerns waiting times of the triggered events in the clusters.

\begin{assumption}\label{assumption6}  
For $x_T$ satisfying Assumption~\ref{assumption2}, for each cluster $i \ge 1,$ and for each $j \in \{1, \ldots, K_i\}$, it holds that 
$$ x_T \, \p{Q_{ij} > \epsilon T} = \smallO(1), \quad \text{ as } T \rightarrow \infty.$$
\end{assumption}

\begin{rem}
    Assumption~\ref{assumption6} details the relationship between the distribution of $Q$ and the scaling sequence $x_T$, which determines the order of the large deviation regime under consideration. In Remark~\ref{rem:Worder} below, we give moment conditions on $Q$ in order for Assumption~\ref{assumption6} to hold for a fixed $x_T$. This assumption represents the precise cost required to obtain a functional version of the large deviation principle for the partial sums established in Proposition~10 of \cite{bcw23}. 
\end{rem}

As previously mentioned, our aim is to use the above set of assumptions to show that, in the $M_1$ topology, the process formed by the $(k+1)$ largest jumps can be effectively approximated by considering the trajectory of their decompositions over the interval $[0, T]$, as described in Equation~\eqref{eq:decomp}.

\begin{prop}\label{prop:t2}
     Let $\{D_{i}\}_{i \ge 1}$ be an i.i.d sequence satisfying Assumption~\ref{assumption1}, with scaling sequence $x_T$ satisfying Assumption~\ref{assumption2}, independent from $N_0$. Suppose further that Assumptions~\ref{assumption3} to Assumptions~\ref{assumption6} hold. Then, for each $k \ge 0$, and for all $\epsilon, r >0$, and as $T \rightarrow \infty,$
    {\footnotesize\begin{equation*} v'(x_T)^{k+1} \p{d_{M_1}\Big(x_T^{-1} \sum_{i=0}^{k} D_{(N-i)}  \I{\Gamma_{(N-i)}/T \le t},\,  x_T^{-1} \sum_{i=0}^{k} \sum_{j=0}^{\mfK_{(N-i)}} Z_{(N-i)j} \I{(\Gamma_{(N-i)} + W_{(N-i)j})/T \le t}\Big) > \epsilon,\,  d_{M_1}(x_T^{-1}\tilde{R}_T, \D_k) > r} = \smallO(1).\end{equation*} }
\end{prop}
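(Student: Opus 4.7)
The plan is to compare the two trajectories cluster-by-cluster in the $M_1$ topology, exploiting the well-known feature of $M_1$ that a single large jump can be approximated by several smaller jumps clustered nearby in time, provided the total mass is essentially preserved. Concretely, for each $i \in \{0, \ldots, k\}$, I would compare the one-jump path $t \mapsto D_{(N-i)} \I{\Gamma_{(N-i)}/T \le t}$ with its unfolded counterpart $t \mapsto \sum_{j=0}^{\mfK_{(N-i)}} Z_{(N-i)j} \I{(\Gamma_{(N-i)} + W_{(N-i)j})/T \le t}$. Using Equation~\eqref{eq:decomp} together with Definition~\ref{def:remaindert}, the second path equals the first minus a leakage term $D_{(N-i)}^{>T}$ plus a spread of the retained mass over a time window of length at most $\max_j W_{(N-i)j}/T$. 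An explicit parametric representation (as in Section~\ref{m1}) synchronising the two graphs yields the cluster-wise bound
\[
d_{M_1}\bigl(D_{(N-i)} \I{\Gamma_{(N-i)}/T \le \cdot},\; \sum_{j=0}^{\mfK_{(N-i)}} Z_{(N-i)j} \I{(\Gamma_{(N-i)} + W_{(N-i)j})/T \le \cdot}\bigr) \le \max_j \frac{W_{(N-i)j}}{T} + \frac{D_{(N-i)}^{>T}}{x_T}.
\]
By a triangle inequality applied over the (almost surely) disjoint temporal supports of the top $k+1$ clusters, the event $\{d_{M_1}(\cdots) > \epsilon\}$ is then contained in the union of $\{\max_{i \le k,\, j \le \mfK_{(N-i)}} W_{(N-i)j} > \epsilon T / (2(k+1))\}$ and $\{\max_{i \le k} D_{(N-i)}^{>T} > \epsilon x_T / (2(k+1))\}$.

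For the waiting-time event, I would drop the order-statistic restriction via exchangeability and apply a union bound over all clusters $i \le N_0(T)$ and offspring $j \le \mfK_i$. Conditioning on $\cF_{FG, i}$ (Assumption~\ref{assumption3}) renders $\{W_{ij}\}_j$ conditionally i.i.d.\ and independent of $\mfK_i$, so that Assumption~\ref{assumption6} applies one offspring at a time. Combining this with the tail-equivalence $\p{\mfK > x} \sim c\, \p{D > x}$ (which ensures $\E{\mfK} < \infty$ since $\alpha > 1$), with $\E{N_0(T)} = \lambda T$, and with the scaling of Assumption~\ref{assumption2}, would deliver $v'(x_T)^{k+1}\, \p{\text{waiting-time event}} = \smallO(1)$. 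For the leakage event, I would invoke Assumption~\ref{assumption5}: conditional on $\cF_{CI, i}$, the sequence $\{Q_{ij}\}_j$ is i.i.d.\ and independent of $\mfM_i$, with $\p{Q_{ij} > T - \Gamma_i \mid \cF_{CI, i}} = \smallO(1)$ almost surely. A conditional moment estimate on $D_i^{>T}$, together with the tail-equivalences of $\mfM$ and $Z$ to $D$ (Assumptions~\ref{assumption4} and~\ref{assumption5}) and a union bound over the top $k+1$ clusters, closes this part, also exploiting the conditioning on $\{d_{M_1}(x_T^{-1}\tilde{R}_T, \D_k) > r\}$ to control the size of the selected clusters.

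The main obstacle is the interplay between the selection of the top $k+1$ clusters by the order statistics of $\{D_i\}$ and the in-cluster quantities $(\mfK_i, \{W_{ij}\}_j, D_i^{>T})$, all of which are tail-comparable to $D_i$ by Assumptions~\ref{assumption3}--\ref{assumption5}. Since conditioning on $D_{(N-i)}$ being large may substantially inflate the conditional moments of $\mfK_{(N-i)}$ and $\mfM_{(N-i)}$, the union-bound strategy must be coupled carefully with the exchangeability of the unordered clusters and with the conditional independence structures provided by $\cF_{FG, \cdot}$ and $\cF_{CI, \cdot}$, in a manner analogous to the treatment of large jumps in Theorem 4.3 of \cite{dtw22}. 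A secondary subtlety is to verify that the $M_1$-parametric representation built in Step 1 remains valid when several top clusters' supports happen to overlap in time, which should be controllable with probability tending to one thanks to the asymptotic spreading of the immigration times $\Gamma_{(N-i)}/T$.
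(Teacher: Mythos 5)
Your high-level plan — cluster-wise parametric representations, $M_1$ bounded by temporal spread plus spatial leakage past $T$, then treat each piece — matches the paper's decomposition into $\T_{21}$ (leakage) and $\T_{22}$ (waiting times). However, the way you close each piece diverges from the paper's proof in ways that matter.

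The central gap is in your treatment of the waiting-time event. You propose to drop the restriction to the top $k+1$ clusters, union over all $i\le N_0(T)$ and $j\le\mfK_i$, and balance against $\E{N_0(T)}=\lambda T$, $\E{\mfK}<\infty$, and Assumption~\ref{assumption6}. That bound gives, after applying Assumption~\ref{assumption6},
\begin{equation*}
v'(x_T)^{k+1}\,\lambda T\,\E{\mfK}\,\p{W>\epsilon T}=\smallO\!\left(\frac{v(x_T)^{k+1}}{T^k\,x_T}\right),
\end{equation*}
and since $v(x_T)$ is regularly varying with index $\alpha$ in $x_T$ with $x_T\gg T^{\max(1/\alpha,1/2)}$, this quantity \emph{diverges} for every $\alpha>1$ and every $k\ge 0$ (already for $k=0$ it is $\asymp x_T^{\alpha-1}\to\infty$). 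You cannot afford to forget the intersection with $\{d_{M_1}(x_T^{-1}\tilde{R}_T,\D_k)>r\}$: the paper keeps it, replaces it by $\{D_{(N-k)}>2rx_T\}$ via Lemma~6.1 of \cite{dtw22}, and then factors into a conditional probability times $v'(x_T)^{k+1}\p{D_{(N-k)}>2rx_T\mid N_0}=\cO_p(1)$. The conditional factor is reduced to a single unordered cluster via the pull-out identity (Lemma~\ref{lem:index-pull-out}) and then bounded by a union bound \emph{within} that cluster; the key estimate $\E{x_T^{-1}K_i\mid D_i>2rx_T}=\cO(1)$ (Corollary~2.1.10 of \cite{ks20}, via joint regular variation of $(K_i,D_i)$) is exactly what lets the factor $x_T$ in Assumption~\ref{assumption6} cancel the conditionally inflated cluster size. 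Your plan never brings in this conditional moment bound, and your $\lambda T$-fold union bound over unconditioned clusters cannot substitute for it.

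For the leakage event you do retain the $\D_k$-conditioning, which is the right idea, but the step you call ``a conditional moment estimate on $D_i^{>T}$'' is far from routine. After conditioning on $\{D_i>2rx_T\}$, the count $\mfM_i$ is of order $x_T$, the summands $Z_{ij}$ are heavy-tailed with the same index as $D$, and $Z_{ij}$ enters both $D_i$ and $D_i^{>T}$, so a Markov/Hölder bound does not obviously deliver $\smallO(1)$. The paper devotes Lemma~\ref{lem:sbj} to precisely this point, adapting the principle-of-a-single-big-jump machinery of \cite{olvera12} to the present conditionally independent setting. Your sketch gestures at the ingredients (Assumptions~\ref{assumption3}--\ref{assumption5}, conditional i.i.d.\ structure under $\cF_{CI,i}$, exchangeability over top clusters) but identifies no mechanism strong enough to conclude; you would in effect need to rediscover Lemma~\ref{lem:sbj}.

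Two smaller points. First, your cluster-wise temporal bound is $\max_j W_{(N-i)j}/T$, whereas the paper's parametric construction (correctly, and necessarily for the Hawkes submodel) uses $\max_{1\le j\le K_{(N-i)}}Q_{(N-i)j}/T$, i.e.\ the last event time in the whole cluster, because the time-truncated sub-cluster sums $D^{\le tT}_{ij}$ continue to accrue mass after $\Gamma+W_{ij}$. Second, your worry about overlapping temporal supports of the top clusters is handled implicitly in the paper because the order-statistics computation and the pull-out lemma do not require the supports to be disjoint; you do not need to establish asymptotic disjointness.
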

\vspace{0.2cm}

\begin{rem}
    Note that, in the notation of Proposition~\ref{prop:t2}, the decomposition of the $(N_0(T)-i)$th largest cluster $D_{(N-i)}$ is identified, in notation, by $\sum_{j=0}^{\mfK_{(N-i)}} Z_{(N-i)j} \I{(\Gamma_{(N-i)} + W_{(N-i)j)/T} \le \cdot}$. In particular, this means that $\mfK_{(N-i)}$, $Z_{(N-i)j}$, $\Gamma_{(N-i)}$, and $W_{(N-i)j}$ do not correspond to the $(N_0(T)-i)$th order statistics of their respective sequences.
\end{rem}

\begin{proof}
    The proof uses the specificities of the $M_1$ topology presented in \cite{whitt2002}, and is relegated to Subsection~\ref{proof:t2}. 
\end{proof}

We need to introduce a final technical condition ensuring that the centerings corresponding to different components of the decomposed processes are deemed equivalent in the $M_1$ topology. 

\begin{assumption}\label{assumption7} Let $\{D_{i}\}_{i \ge 1}$ be an i.i.d sequence satisfying Assumption~\ref{assumption1}, with scaling sequence $x_T$ satisfying Assumption~\ref{assumption2}, independent from $N_0$. It holds, for quantities satisfying Assumptions~\ref{assumption3} and~\ref{assumption4}, for each $k \ge 0$ and for all $\epsilon > 0$, that {\footnotesize$$v'(x_T)^{k+1}\p{\sup_{0 \le t \le 1}\Bigg\lvert x_T^{-1} N_0(tT) \E{D} - x_T^{-1} \E{\sum_{i=1}^{N_0(tT)} \sum_{j=1}^{\mfK_i} Z_{ij} \I{(\Gamma_i + W_{ij})/T \le t}}\Bigg\rvert > \epsilon, d_{M_1}(x_T^{-1} \tilde{R}_T, \D_k) > r} = \smallO(1), \text{ as } T \rightarrow \infty.$$} 
\end{assumption}


With Propositions~\ref{prop:t1} and~\ref{prop:t2} at hand, we can now establish our main theorem: the (hidden) regular variation propoerty of Equation~\eqref{eq:rthatz} in $\M(\D \setminus \D_k)$ for each $k \ge 0$, by using the corresponding result for $\tilde{R}_T(\cdot)$ given in Proposition~\ref{prop:rvd}.

\begin{thm}\label{thm:clustrv}
Let $\{D_{i}\}_{i \ge 1}$ be an i.i.d sequence satisfying Assumption~\ref{assumption1}, with scaling sequence $x_T$ satisfying Assumption~\ref{assumption2}, independent from $N_0$. Suppose further that Assumptions~\ref{assumption3} to~\ref{assumption7} hold. Then, for $k \ge 0$, and $\hat{R}_T(\cdot)$ in Equation~\eqref{eq:rthatz}, 
    $$v'(x_T)^{k+1} \p{x_T^{-1} \hat{R}_T \in \cdot} \rightarrow \mu_{k+1}^{\#}(\cdot) \text{ in } \M(\D \setminus \D_k),\, \text{ as } T \rightarrow \infty, $$ with respect to the $M_1$ topology, where $\mu_{k+1}^{\#}(\cdot)$ is defined as in Proposition~\ref{prop:rvd}.
\end{thm}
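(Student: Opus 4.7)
The plan is to invoke Lemma~\ref{lem:slutsky} with $Y_T^{(1)} \defeq \tilde{R}_T$ and $Y_T^{(2)} \defeq \hat{R}_T$ in the metric space $\bigl(\D([0,1],\R), d_{M_1}\bigr)$ and closed cone $F = \D_k$. Proposition~\ref{prop:rvd} already yields $\tilde{R}_T \in \mathrm{RV}(\D\setminus\D_k, v'(x_T)^{k+1}, \mu_{k+1}^{\#})$, so only the two asymptotic negligibility conditions~\eqref{eq:lemc1}--\eqref{eq:lemc2} remain; they are handled identically.

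To bound $d_{M_1}(x_T^{-1}\tilde{R}_T, x_T^{-1}\hat{R}_T)$, I would insert two intermediate \emph{top-$(k+1)$} trajectories
$$S_k(t) \defeq \sum_{i=0}^k D_{(N-i)} \I{\Gamma_{(N-i)}/T \le t}, \qquad \hat{S}_k(t) \defeq \sum_{i=0}^k \sum_{j=0}^{\mfK_{(N-i)}} Z_{(N-i)j} \I{(\Gamma_{(N-i)} + W_{(N-i)j})/T \le t},$$
and split $d_{M_1}(x_T^{-1}\tilde{R}_T, x_T^{-1}\hat{R}_T)$ by the triangle inequality as $T_1+T_2+T_3$, with $T_1 \defeq d_{M_1}(x_T^{-1}\tilde{R}_T, x_T^{-1}S_k)$, $T_2 \defeq d_{M_1}(x_T^{-1}S_k, x_T^{-1}\hat{S}_k)$, and $T_3 \defeq d_{M_1}(x_T^{-1}\hat{S}_k, x_T^{-1}\hat{R}_T)$. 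Splitting $\{T_1+T_2+T_3 > \epsilon\}$ according to which summand exceeds $\epsilon/3$ and intersecting with $\{d_{M_1}(x_T^{-1}\tilde{R}_T, \D_k) > r\}$, Proposition~\ref{prop:t1} takes care of the $T_1$-event and Proposition~\ref{prop:t2} of the $T_2$-event, both at the correct speed $v'(x_T)^{k+1}$.

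The $T_3$-event is where the work lies. I would bound it via $d_{M_1} \le \|\cdot\|_\infty$ and exploit the pointwise identity from decomposition~\eqref{eq:decomp},
$$D_i\,\I{\Gamma_i/T \le t} - \sum_{j=0}^{\mfK_i} Z_{ij}\,\I{(\Gamma_i+W_{ij})/T \le t} = D_i^{>tT}\,\I{\Gamma_i \le tT},$$
to rewrite
$$(\hat{R}_T-\hat{S}_k)(t)-(\tilde{R}_T-S_k)(t) = -\!\!\sum_{i \notin \mathrm{top}} D_i^{>tT}\,\I{\Gamma_i \le tT} + \Bigl(N_0(tT)\E{D} - \E{\textstyle\sum_{i=1}^{N_0(tT)}\sum_{j=0}^{\mfK_i} Z_{ij}\,\I{(\Gamma_i+W_{ij})/T \le t}}\Bigr).$$
The centering mismatch on the right is precisely the object of Assumption~\ref{assumption7}; the non-top remainder sum is dominated up to negligible boundary terms by $\sum_{i\notin\mathrm{top}}\sum_{j\ge 1}Z_{ij}\I{W_{ij} > \epsilon T}$, which is $o_p(x_T)$ at the speed $v'(x_T)^{k+1}$ by a Markov-type estimate combining the tail bound on $\mfM$ from Assumption~\ref{assumption5} (and the conditional independence of $\{Q_{ij}\}$ given $\cF_{CI,i}$, used to factor the resulting expectation) with the displacement bound $x_T\,\P(Q_{ij}>\epsilon T)=o(1)$ from Assumption~\ref{assumption6}. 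The leftover sup-norm control of $\tilde{R}_T-S_k$ is obtained from the same centered-partial-sum arguments that already underlie Proposition~\ref{prop:t1}, since the non-top centered sum is $O_p(\sqrt{T})=o_p(x_T)$ under Assumption~\ref{assumption2}.

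The main obstacle is precisely this handling of $T_3$: the mere $M_1$-closeness of $\tilde{R}_T$ to $S_k$ given by Proposition~\ref{prop:t1} is by itself too weak to compare $\hat{R}_T-\hat{S}_k$ with $\tilde{R}_T-S_k$, and one has to upgrade to a sup-norm estimate while simultaneously controlling the offspring-displacement tails and the centering approximation. This is where the auxiliary Assumptions~\ref{assumption5}--\ref{assumption7}, absent from Proposition~\ref{prop:rvd}, genuinely pay for themselves and dictate the choice of hypotheses placed on the waiting times $W_{ij}$ and on the tail behaviour of $\mfK_i, \mfM_i$.
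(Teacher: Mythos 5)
Your overall strategy matches the paper exactly: invoke Lemma~\ref{lem:slutsky} with $(\tilde{R}_T, \hat{R}_T)$ in $(\D([0,1],\R), d_{M_1})$ and $F=\D_k$, insert the two top-$(k+1)$ trajectories $S_k$ and $\hat{S}_k$, split by the triangle inequality, and dispatch the first two pieces with Propositions~\ref{prop:t1} and~\ref{prop:t2}. The one-line observation that it suffices to control the $\limsup$ against the event $\{d_{M_1}(x_T^{-1}\tilde{R}_T,\D_k)>r\}$ alone (because $d_{M_1}(x_T^{-1}\hat{R}_T,\D_k)\le d_{M_1}(x_T^{-1}\tilde{R}_T,\D_k)$) is implicitly in both your outline and the paper. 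So the architecture is the same.

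Where you diverge is the third piece, and there is a genuine gap there. You rewrite
$(\hat{R}_T-\hat{S}_k)-(\tilde{R}_T-S_k)=-\sum_{i\notin\mathrm{top}}D_i^{>tT}\I{\Gamma_i\le tT}+[\text{centering mismatch}]$
and then claim the non-top remainder sum $\sum_{i\notin\mathrm{top}}D_i^{>tT}\I{\Gamma_i\le tT}$ is dominated, ``up to negligible boundary terms,'' by $\sum_{i\notin\mathrm{top}}\sum_{j}Z_{ij}\I{Q_{ij}>\epsilon T}$. The domination only holds for clusters with $\Gamma_i\le (t-\epsilon)T$; the ``boundary'' clusters with $\Gamma_i\in((t-\epsilon)T,tT]$ contribute roughly $\lambda\epsilon T\,\E{D}$ in mean, and this is \emph{not} $\smallO(x_T)$ in general: Assumption~\ref{assumption2} only gives $x_T\gg T^{\max(1/\alpha,1/2)}$, so for $\alpha>2$ one can have $x_T\ll T$, and the boundary mass is then larger than the threshold $\epsilon' x_T$. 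Hence the ``negligible boundary terms'' claim needs a separate argument that Assumption~\ref{assumption6} does not provide, and as written the step fails.

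The paper sidesteps the remainder $D_i^{>tT}$ for non-top clusters entirely. It bounds $d_{M_1}(x_T^{-1}\hat{S}_k,x_T^{-1}\hat{R}_T)$ by the sup-norm of $x_T^{-1}(\hat{R}_T-\hat{S}_k)$ and splits directly into (i) the centered non-top partial sums of $\sum_j Z_{ij}\I{(\Gamma_i+W_{ij})/T\le t}$, call it $\T_{31}$, and (ii) the centering mismatch $\T_{32}$. The key observation for $\T_{31}$ is the a.s. pointwise domination $\sum_{j=0}^{\mfK_i}Z_{ij}\I{(\Gamma_i+W_{ij})/T\le t}\le D_i\I{\Gamma_i/T\le t}$, which lets the moment/Etemadi/Fuk--Nagaev machinery from the proof of Proposition~\ref{prop:t1} run verbatim on the dominated summands, with only the asymptotically negligible term $x_T^{-1}\sum_{i=0}^k\E{D}=\smallO(1)$ to absorb. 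Assumption~\ref{assumption7} handles $\T_{32}$ exactly as you say. This route not only avoids your boundary issue, but also avoids the redundancy in your outline, where the sup-norm control of $\tilde{R}_T-S_k$ is re-derived inside $T_3$ even though it is already the content of $T_1$. If you want to salvage your decomposition, you would need to show the remainder sum over clusters started near $tT$ is negligible at the speed $v'(x_T)^{k+1}$, which essentially forces you to re-derive the partial-sum estimate anyway; the paper's domination argument is the cleaner way to package this.
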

\begin{proof}
For the approximation to hold, combining the result of Proposition~\ref{prop:rvd} and Lemma~\ref{lem:slutsky}, we want to show, for each $k \ge 0$, and for all $\epsilon, r > 0$, that it holds 
\begin{align*}
     \limsup_{T \rightarrow \infty}v'(x_T)^{k+1} \pi_{T}^{k+1}(\tilde{R}_T, \hat{R}_T) \defeq\limsup_{T \rightarrow \infty} v'(x_T)^{k+1}\p{d_{M_1}(x_T^{-1}\tilde{R}_T, x_T^{-1}\hat{R}_T) > \epsilon, d_{M_1}(x_T^{-1} \tilde{R}_T, \D_{k}) > r} = 0.
\end{align*}
This suffices, because $d_{M_1}(x_T^{-1} \hat{R}_T, \D_k) \le d_{M_1}(x_T^{-1} \tilde{R}_T, \D_k)$ for each $k \ge 0$.

Now, the joint probability in the above equation can further be decomposed, using the fact that $d_{M_1}$ is a proper metric (see Chapter 6 of \cite{whitt2002}); by a triangular inequality, it holds that
{\footnotesize
\begin{align} 
     &v'(x_T)^{k+1}  \pi_{T}(\tilde{R}_T, \hat{R}_T) 
     \le v'(x_T)^{k+1}\p{d_{M_1}\big(x_T^{-1}\tilde{R}_T,\,  x_T^{-1} \sum_{i=0}^{k} Z_{(N-i)} \I{\Gamma_{(N-i)} \le tT} > \epsilon,\,  d_{M_1}(x_T^{-1}\tilde{R}_T, \D_k) > r} \notag \\
    &\quad + v'(x_T)^{k+1} \p{d_{M_1}\Big(x_T^{-1} \sum_{i=0}^{k} Z_{(N-i)}  \I{\Gamma_{(N-i)} \le tT},\,  x_T^{-1} \sum_{i=0}^{k} \sum_{j=0}^{\mfK_{(N-i)}} Z_{(N-i)j} \I{\Gamma_{(N-i)} + W_{(N-i)j} \le tT}\Big) > \epsilon,\,  d_{M_1}(x_T^{-1}\tilde{R}_T, \D_k) > r}\notag \\
    &\quad + v'(x_T)^{k+1} \p{d_{M_1}\Big(x_T^{-1} \sum_{i=0}^{k} \sum_{j=0}^{\mfK_{(N-i)}} Z_{(N-i)j} \I{\Gamma_{(N-i)} + W_{(N-i)j} \le tT},\, x_T^{-1}\hat{R}_T  \Big) > \epsilon,\,  d_{M_1}(x_T^{-1}\tilde{R}_T, \D_k) > r}\notag \\
    &\eqdef \T_1 + \T_2 + \T_3. \notag 
\end{align}
}
The negligibility $\T_1 = \smallO(1), $ as $T \rightarrow \infty$, follows from Proposition~\ref{prop:t1}, while $\T_2= \smallO(1),$ as $T \rightarrow \infty$, follows from Proposition~\ref{prop:t2}. 

For $\T_3$, note that, by Lemma 6.1 in \cite{dtw22}, we can bound the second event in the probability of interest by $$\{d_{M_1}(x_T^{-1}\tilde{R}_T, \D_k) > r\} \subseteq \{D_{(N-k)} > 2rx_T \}.$$
   
For the first event, we bound the $M_1$ distance by its sup-norm (see Subsection~\ref{m1}), and the expression to consider boils down to
    {\footnotesize
\begin{align*}\label{eq:t3}
 &\sup_{0 \le t \le 1} \Bigg\lvert x_T^{-1} \sum_{i=1}^{N_0(tT)}  \sum_{j=1}^{\mfK_i} Z_{ij} \I{\Gamma_i + W_{ij} \le tT} - x_T^{-1} \E{\sum_{i=1}^{N_0(tT)} \sum_{j=1}^{\mfK_i} Z_{ij} \I{\Gamma_i + W_{ij} \le tT}} - x_T^{-1}\sum_{i=0}^{k}\sum_{j=0}^{\mfK_{(N-i)}} Z_{(N-i)j} \I{\Gamma_{(N-i)} + W_{(N-i)j} \le tT}\Bigg\rvert \\
    &\quad \le \sup_{0 \le t \le 1} \Bigg\lvert x_T^{-1} \sum_{i=1}^{N_0(tT)} \Big(\sum_{j=1}^{\mfK_i} Z_{ij} \I{\Gamma_i + W_{ij} \le tT} - \E{D} \Big) - x_T^{-1} \sum_{i=0}^{k}\Big(\sum_{j=0}^{\mfK_{(N-i)}} Z_{(N-i)j} \I{\Gamma_{(N-i)} + W_{(N-i)j} \le tT} - \E{D} \Big) - x_T^{-1} \sum_{i=0}^{k} \E{D} \Bigg\rvert \\
    &\qquad + \sup_{0 \le t \le 1} \Bigg\lvert x_T^{-1} N_0(tT) \E{D} - x_T^{-1} \E{\sum_{i=1}^{N_0(tT)} \sum_{j=1}^{\mfK_i} Z_{ij} \I{\Gamma_i + W_{ij} \le tT}}\Bigg\rvert \\
    &\quad \eqdef \T_{31} + \T_{32}, 
\end{align*}
}
where the upper bound is obtained using a triangular inequality. 

By another triangular inequality on the whole expression $\T_3$, again using the fact that $d_{M_1}$ is a proper metric, this means that we need to show, for each $k \ge 0$, all $\epsilon, r > 0$, that 
\begin{align*}
    v'(x_T)^{k+1} \Big(\p{\T_{31} > \epsilon/2, D_{(N-k)} > 2rx_T} + \p{\T_{32} > \epsilon/2, D_{(N-k)} > 2rx_T}\Big) = \smallO(1), \text{ as } T \rightarrow \infty.
\end{align*}

On the one hand, to control $\T_{31}$, note that 
\begin{align*}
    x_T^{-1} \Bigg(\sum_{i=1}^{N_0(tT)} \Big(\sum_{j=1}^{\mfK_i} Z_{ij} \I{\Gamma_i + W_{ij} \le tT} - \E{D} \Big) - \Big(\sum_{i=0}^{k}& \sum_{j=0}^{\mfK_{(N-i)}} Z_{(N-i)j} \I{\Gamma_{(N-i)} + W_{(N-i)j} \le tT} - \E{D} \Big) \Bigg) \\ &= x_T^{-1}\sum_{\substack{i=1 \\ i \neq (N), (N-1), \ldots (N-k)}}^{N_0(tT)} \Big(\sum_{j=0}^{\mfK_i} Z_{ij} \I{\Gamma_i + W_{ij} \le tT} - \E{D} \Big)
\end{align*}
and because $$\sum_{j=0}^{\mfK_i} Z_{ij} \I{\Gamma_i + W_{ij} \le tT} \le D_i \I{\Gamma_i/T \le t}\, \text{ a.s.}$$
the fact that, for each $k \ge 0$, and all $\epsilon, r > 0$  $$v'(x_T)^{k+1}\p{\T_{31} > \epsilon/2, D_{(N-k)} > 2rx_T} = \smallO(1), \text{ as } T \rightarrow \infty,$$ follows by using exactly the same proof as the one of Proposition~\ref{prop:t1}, modulo the extra (asymptotically negligible) term $x_T^{-1} \sum_{i=0}^k \E{D} = \smallO(1)$, as $T  \rightarrow \infty.$ 

On the other hand, by Assumption 7, for each $k \ge 0$, and for all $\epsilon, r > 0$, it holds that
$$v'(x_T)^{k+1}\p{\T_{32} > \epsilon/2, D_{(N-k)} > 2rx_T } = \smallO(1),\, \text{ as } T \rightarrow \infty,$$ which concludes the proof of the theorem. 

\end{proof}
\begin{rem}\label{rem:samplepath}
    Theorem~\ref{thm:clustrv} shows that, for each $k \ge 0$, the process $\hat{R}_T(\cdot)$ from Equation~\eqref{eq:rthatz}, is (hiddenly) regularly varying with speed $v'(x_T)^{k+1}$ and scaling $x_T$, since its rescaled distribution, as a measure on the set of càdlàg functions with at least $(k+1)$ points, satisfy Definition~\ref{def:rv}, Definition~\ref{def:hrv}. As emphasised in Section 2.2 of \cite{dtw22}, it can be formulated as a sample paths large deviation result, as in Theorem 3.2 in \cite{rbz19}. Indeed, write $\text{cl}A$ for the closure of a set $A$ and $\text{int}A$ for its interior; for any Borel set $A \subset \D \setminus \D_k$, define $$\cK(A) = \max\{k \ge 0: A \cap \D_k = \emptyset \} \text{ and } \cI(A) = \mu_{\cK(A)}^{\#}(A).$$ Then, Theorem~\ref{thm:clustrv} can be written, in an equivalent form, as $$\cI(\text{int} A)\le \liminf_{T \rightarrow \infty} v'(x_T)^{\cK(A)} \p{x_T^{-1} \hat{R}_T \in A} \le \limsup_{T \rightarrow \infty} v'(x_T)^{\cK(A)} \p{x_T^{-1} \hat{R}_T \in A} \le \cI(\text{cl}A) $$ for $A$ such that $\cK(A)$ is finite, and $A$ is bounded away from $\D_{\cK(A)}.$ Sample paths large deviations, as emphasised in the introduction, is a very active field of research, with seminal contribution due to \cite{pinelis81}. 
\end{rem}

\begin{rem}
    Theorem~\ref{thm:clustrv} establishes a hidden regular variation principle for each $k \ge 0$, which -- by the preceding remark -- can be interpreted as a sample path large deviation principle for summation functionals of Poisson cluster processes. The form of the limiting measure $\mu^{\#}_{k+1}(\cdot)$ reveals that, for each $k \ge 1$, it is supported on the space of càdlàg functions with exactly $(k+1)$ discontinuities. This indicates that, when controlling for the relevant ``risk scenarios'' -- for all $r > 0$, by the sets $\{d_{M_1}(x_T^{-1} \tilde{R}_T, \D_k) > r\}$ -- the limiting behaviour involves multiple large jumps in the tail measure. Interestingly, under Assumptions~\ref{assumption1} to~\ref{assumption7}, no additional asymptotic cost is incurred when moving from the case $k = 0$ to the case $k \ge 1$. This phenomenon, highlighted in the proofs relegated to Section~\ref{section:proof4}, arises from the precise control of the risk scenario of interest at each level $k$, and more specifically by the way we construct the parametric representations -- in the $M_1$ topology -- of the processes in Proposition~\ref{prop:t2}. 
\end{rem}

\section{Verifying the assumptions for submodels of Section~\ref{section:intropp}}\label{section:check}

\subsection{Mixed Binomial Poisson cluster process}

In this single-generation mixed Binomial Poisson cluster process introduced in Section~\ref{section:intropp}, the centered cumulative functional corresponding to Equation~\eqref{eq:rthatz}, as a càdlàg process in $\D([0,1], \R)$, defined on $(\Omega, \cF, \P)$, is given by
\begin{equation}\label{eq:cs1} \hat{R}^{MB}_T(t) \defeq \sum_{i=1}^{N_0(tT)} \sum_{j=0}^{K_{A_{i0}}} X_{ij}  \I{(\Gamma_i + W_{ij})/T \le t} - \E{\sum_{i=1}^{N_0(tT)} \sum_{j=0}^{K_{A_{i0}}} X_{ij}  \I{(\Gamma_i + W_{ij})/T \le t}}, \text{ for } t \in [0,1]. \end{equation}

We state the sufficient assumptions on the mixed Binomial Poisson process in order to obtain a corollary of Theorem~\ref{thm:clustrv}. We denote the following set of assumptions by \textbf{(A. MB.nf)}. 
\begin{enumerate}
    \item[] \textbf{Assumption 1.} For each $i \ge 1$, $(X_{i0}, K_{A_{i0}}) \in \text{RV}((\R_+ \setminus \{0\}) \times (\R_+ \setminus \{0\}), v(\cdot), \mu')$, with index $\alpha > 1$. 
    \item[] \textbf{Assumption 2.} Assumption~\ref{assumption2} is assumed for the order of the large deviation scaling sequence $x_T$. 
\end{enumerate}
In order to state a forthcoming corollary in non-functional setting, we isolate the following assumption that we denote by \textbf{(A. MB.f)}. 
\begin{enumerate}
    \item[] \textbf{Assumption 6.} For a each $i \ge 1, j \ge 1,$ $$ x_T\p{W_{ij} > \epsilon T} = \smallO(1), \text{ as } T \rightarrow \infty.$$
\end{enumerate}

Together, the set of assumptions \textbf{(A. MB.nf)} and \textbf{(A. MB.f)} is simply denoted by \textbf{(A. MB)}. 

We can now state the following corollary to Theorem~\ref{thm:clustrv}. Define $g(\cdot, \cdot): \R^2_+ \rightarrow \R_+$ by $g(x_0, k) = x_0 + \E{X} k.$ 

\begin{cor}\label{cor:mb}
For the mixed Binomial Poisson cluster process introduced in Section~\ref{section:intropp}, assume Assumptions \textbf{(A. MB)} hold. Then, for $k \ge 0$, and $\hat{R}^{MB}_T(\cdot)$ in Equation~\eqref{eq:cs1}, 
    $$v'(x_T)^{k+1} \p{x_T^{-1} \hat{R}^{MB}_T \in \cdot} \rightarrow \mu_{k+1}^{\#}(\cdot) \text{ in } \M(\D \setminus \D_k),\, \text{ as } T \rightarrow \infty, $$ with respect to the $M_1$ topology, and where, for Borel $B \in \cB(\D \setminus \D_k)$, $$\mu_{k+1}^{\#}(B) \defeq \frac{1}{(k+1)!} \int_{([0,1]\times \R_+ )^{k+1}} \I{ (\sum_{i=1}^{k+1} y_i\I{t_i\le t})_{0\le t\le 1} \in B }  \lambda^{k+1}\diff t_1 \cdots \diff t_{k+1} \mu(\diff y_1)\cdots \mu(\diff y_{k+1})$$ with $\mu(\cdot) \defeq \mu' \circ g^{-1}(\cdot) + \E{K_{A_{i0}}}\mu_{X_{i0}}(\cdot).$
\end{cor}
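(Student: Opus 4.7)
The plan is to derive the corollary directly from Theorem~\ref{thm:clustrv} by matching the bookkeeping of Section~\ref{section:res} to the mixed Binomial model and verifying Assumptions~\ref{assumption1}--\ref{assumption7}. Concretely, I take $\mfK_i \defeq K_{A_{i0}}$, $\mfM_i \defeq K_{A_{i0}}$, $Z_{ij} \defeq X_{ij} = f(A_{ij})$, $Q_{ij} \defeq W_{ij}$, and $\cF_{FG,i} \defeq \cF_{CI,i} \defeq \sigma(A_{i0})$. With these choices, the conditional-independence clauses in Assumptions~\ref{assumption3} and~\ref{assumption5} and the moment bound $\E{W} < \infty$ are built into Definition~\ref{def:mbpp}, Assumption~\ref{assumption2} is exactly (A.\,MB.nf).2, and Assumption~\ref{assumption6} is exactly (A.\,MB.f).

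The substantive work lies in Assumption~\ref{assumption1}: I must show that $D_i = X_{i0} + \sum_{j=1}^{K_{A_{i0}}} X_{ij}$ belongs to $\text{RV}(\R_+\setminus\{0\}, v(T), \mu)$ with $\mu = \mu'\circ g^{-1} + \E{K_{A_{i0}}}\mu_{X_{i0}}$. I would decompose
\[
D_i \;=\; g\bigl(X_{i0}, K_{A_{i0}}\bigr) + \sum_{j=1}^{K_{A_{i0}}}\bigl(X_{ij} - \E{X}\bigr) \;\eqdef\; G_i + S_i.
\]
Joint regular variation of $(X_{i0}, K_{A_{i0}})$ under (A.\,MB.nf).1, combined with the positive homogeneity of $g$ and Remark~\ref{rem:rvvec}, gives $G_i \in \text{RV}(\R_+\setminus\{0\}, v(T), \mu'\circ g^{-1})$. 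For $S_i$, I would apply a Breiman / Nagaev-type tail asymptotic for random sums of centered, regularly varying, i.i.d.\ summands -- using that the $X_{ij}$, $j \ge 1$, are i.i.d.\ copies of $X_{i0}$ independent of $K_{A_{i0}}$ (items~1--2 of Definition~\ref{def:mbpp}), that $\E{K_{A_{i0}}} < \infty$, and that $\alpha > 1$ -- to obtain $\p{S_i > x} \sim \E{K_{A_{i0}}}\,\p{X_{i0} > x}$. This yields the additive piece $\E{K_{A_{i0}}} \mu_{X_{i0}}$ in the vague limit. A Slutsky-type step on $G_i + S_i$, together with the observation that the two ``single large jump'' scenarios decouple asymptotically (joint large values have lower-order probability), produces the announced form of $\mu$. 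The tail equivalences required by Assumption~\ref{assumption3}.1 and Assumption~\ref{assumption4} then follow by marginalisation: $K_{A_{i0}}$ is regularly varying with index $\alpha$ (marginal of $\mu'$) and $Z_{ij}$ has the law of $X_{i0}$, so both are asymptotically constant multiples of $\p{D_i > x}$.

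The remaining checks are routine. The last clause of Assumption~\ref{assumption5}, $\p{W_{ij} > T - \Gamma_i \mid A_{i0}} \to 0$ a.s.\ as $T \to \infty$, is immediate since $\E{W_{ij}\mid A_{i0}} < \infty$ a.s.\ and $T - \Gamma_i \to \infty$ a.s. For Assumption~\ref{assumption7}, Fubini together with the conditional independence of $\{W_{ij}\}_{j \ge 1}$ given $\sigma(A_{i0})$ reduces the supremum inside the probability to boundary terms of the form $\E{K_{A_{i0}}}\E{X}\,\p{W_{ij} + \Gamma_i > tT}$, which are handled by a Markov inequality and (A.\,MB.f). The only genuinely delicate step is therefore Assumption~\ref{assumption1}: because the offspring marks $X_{ij}$ and the cluster size $K_{A_{i0}}$ live at the same tail index $\alpha$, they must be carefully disentangled, and the resulting $\mu = \mu'\circ g^{-1} + \E{K_{A_{i0}}} \mu_{X_{i0}}$ precisely records the two independent mechanisms producing a large cluster mass, namely a joint large value of $(X_{i0}, K_{A_{i0}})$ or a single large offspring mark.
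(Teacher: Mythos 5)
Your overall plan --- instantiate the general Assumptions~\ref{assumption1}--\ref{assumption7} for the mixed Binomial model and invoke Theorem~\ref{thm:clustrv} --- is exactly the paper's, and your model-specific assignments $\mfK_i = \mfM_i = K_{A_{i0}}$, $Z_{ij} = X_{ij}$, $Q_{ij} = W_{ij}$, $\cF_{FG,i} = \cF_{CI,i} = \sigma(A_{i0})$ match. For Assumption~\ref{assumption1} you re-derive the tail decomposition $\p{D_i > x} \sim \p{g(X_{i0},K_{A_{i0}}) > x} + \E{K_{A_{i0}}}\p{X_{i0} > x}$ via a Nagaev-type random-sum asymptotic for the centered sum $S_i$, whereas the paper simply cites Proposition~6 of \cite{bcw23}; this is a valid alternative. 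One small imprecision: for the last clause of Assumption~\ref{assumption5}, arguing ``$T-\Gamma_i \to \infty$ a.s.'' is not quite right, since $\Gamma_i$ is treated as a uniform arrival time on $[0,T]$. The paper instead averages $T-\Gamma_i$ (distributed as $\text{Unif}[0,T]$) inside the conditional probability and applies Markov, obtaining a bound of order $T^{-1}\E{W\mid \cF_{CI,i}}$.

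There is, however, a genuine gap in your treatment of Assumption~\ref{assumption7}. After applying Campbell's formula to the deterministic centering, the supremum splits into a Poisson fluctuation term $\sup_{0\le t\le 1}\lvert N_0(tT) - \lambda tT\rvert$ (appearing in two copies weighted by $\E{X}$ and $\E{X}\E{K_{A_0}}$) plus the ``late offspring'' remainder of order $\lambda tT \E{X}\E{K_{A_0}\,\p{W > tTU \mid \cF_{FG}}}$. Your sketch addresses only the second term and, incidentally, does not need (A.~MB.f) for it: the paper handles it with Markov and $\E{W}<\infty$ alone, reserving (A.~MB.f) exclusively for Assumption~\ref{assumption6}. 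More importantly, you never address the Poisson fluctuation term, which is the dominant one: you must control $v'(x_T)^{k+1}\p{\sup_{0 \le t \le 1} x_T^{-1}\lvert N_0(tT)-\lambda tT\rvert > \epsilon,\ D_{(N-k)} > 2rx_T}$, which the paper does by factoring via independence of $N_0$ from $D_{(N-k)}$ and then applying Doob's maximal inequality to the martingale $N_0(tT)-\lambda tT$, yielding a bound $\sqrt{\lambda T}/x_T = \smallO(1)$ by Assumption~\ref{assumption2}. Without that martingale step, your verification of Assumption~\ref{assumption7} is incomplete, so the appeal to Theorem~\ref{thm:clustrv} is not yet justified.
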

\begin{proof}
    The proof of this corollary consists of verifying the set of assumptions from Section~\ref{section:res}, and is relegated to Appendix~\ref{app:pcmb}. 
\end{proof}
\begin{rem}
    In Corollary~\ref{cor:mb}, $\mu_{X_{i0}}(\cdot)$ corresponds to the projection on the first coordinate of the measure $\mu'(\cdot).$
\end{rem}

\subsection{Hawkes process}

In the multi-generational Hawkes process introduced in Section~\ref{section:intropp}, the centered cumulative functional corresponding to Equation~\eqref{eq:rthatz}, as a càdlàg process in $\D([0,1], \R)$, defined on $(\Omega, \cF, \P)$, can be written as
\begin{equation}\label{eq:cs2}
\hat{R}^H_T(t) \defeq \sum_{i=1}^{N_0(tT)} \sum_{j=0}^{L_{A_{i0}}} D_{ij}^{\le tT} \, \I{(\Gamma_i + W_{ij})/T \le t} - \E{\sum_{i=1}^{N_0(tT)} \sum_{j=0}^{L_{A_{i0}}} D_{ij} \, \I{(\Gamma_i + W_{ij})/T \le t}}, \quad \text{for } t \in [0,1]. 
\end{equation}
This representation highlights that, starting from the initial immigration sequence $\{(\Gamma_i, A_{i0})\}_{1 \le i \le N_0(T)}$ in $[0,T]$, one may view the process as being composed of $L_{A_{i0}}$ independent self-similar sub-clusters emerging from each immigration event. The total contribution of each of these is captured by the family of independent sub-cluster sums $\{D_{ij}^{\le tT}\}_{i \ge 1, j \ge 0}$. Note that, for the above notation to make sense, we let $D_{i0}^{\le tT} \defeq X_{i0}$, the immigrant mark. 

We state the sufficient assumptions on the Hawkes process in order to obtain a corollary of Theorem~\ref{thm:clustrv}. We denote the following set of assumptions by \textbf{(A. H.nf)}.

\begin{enumerate}
    \item[] \textbf{Assumption 1.} For each $i \ge 1, j \ge 1$, $(X_{ij}, \kappa_{A_{ij}}) \in \text{RV}((\R_+ \setminus \{0\}) \times (\R_+ \setminus \{0\}), v(\cdot), \mu')$, with index $\alpha > 1$. 
    \item[] \textbf{Assumption 2. } Assumption~\ref{assumption2} is assumed for the order of the large deviation scaling sequence $x_T$. 
\end{enumerate}
\begin{rem}
    Recall that the combination of Assumption 1 and Assumption 2 yields a speed $v(x_T)$ consistent with the scaling $x_T.$
\end{rem}
In order to state a forthcoming corollary in non-functional setting, we isolate the following assumption that we denote by \textbf{(A. H.f)}. 
\begin{enumerate}  
    \item[] \textbf{Assumption 6'.} For each $i \ge 1, j \ge 1,$ all $\epsilon > 0$ and some small $\delta >0$, it holds $$x_T \p{W_{ij} > \epsilon T^{1-\delta}} = \smallO(1),\, \text{ as } T \rightarrow \infty.$$
\end{enumerate}

Together, the set of assumptions \textbf{(A. H.nf)} and \textbf{(A. H.f)} is simply denoted by \textbf{(A. H)}.

We can now state the following corollary to Theorem~\ref{thm:clustrv}. Let $g(\cdot, \cdot): \R^2_+ \rightarrow \R_+$ be defined by $g(x, k) = x + (\E{X}/(1-\E{\kappa_A}))k.$

\begin{cor}\label{cor:hawkes}
For the Hawkes process introduced in Section~\ref{section:intropp}, assume Assumptions \textbf{(A. H)} hold. Then, for $k \ge 0$, and $\hat{R}^{H}_T(\cdot)$ in Equation~\eqref{eq:cs1}, 
    $$v'(x_T)^{k+1} \p{x_T^{-1} \hat{R}^{H}_T \in \cdot} \rightarrow \mu_{k+1}^{\#}(\cdot) \text{ in } \M(\D \setminus \D_k),\, \text{ as } T \rightarrow \infty, $$ with respect to the $M_1$ topology, and  where, for Borel $B \in \cB(\D \setminus \D_k)$, $$\mu_{k+1}^{\#}(B) \defeq \frac{1}{(k+1)!} \int_{([0,1]\times \R_+ )^{k+1}} \I{ (\sum_{i=1}^{k+1} y_i\I{t_i\le t})_{0\le t\le 1} \in B }  \lambda^{k+1}\diff t_1 \cdots \diff t_{k+1} \mu(\diff y_1)\cdots \mu(\diff y_{k+1})$$ with $\mu(\cdot) \defeq \mu' \circ g^{-1}(\cdot).$
\end{cor}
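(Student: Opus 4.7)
The approach is to verify that Assumptions~\ref{assumption1}--\ref{assumption7} of Section~\ref{section:res} hold for the Hawkes submodel under \textbf{(A.~H)}, so that Theorem~\ref{thm:clustrv} applies directly and produces the claimed convergence in $\M(\D \setminus \D_k)$ with the correct limit $\mu_{k+1}^{\#}(\cdot)$. The natural identifications between the quantities of Equation~\eqref{eq:cs2} and those of Equation~\eqref{eq:rthatz} are $\mfK_i = L_{A_{i0}}$, $Z_{i0} = X_{i0}$, and $Z_{ij} = D_{ij}$ for $j \ge 1$, where $D_{ij}$ denotes the total sum of the sub-cluster rooted at the $j$-th first-generation offspring of the $i$-th immigrant; the waiting times $W_{ij}$ are those of Definition~\ref{def:hawkesclus}. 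By the self-similarity asserted in item~2 of that definition, $\{D_{ij}\}_{j \ge 1}$ is i.i.d.\ with $D_{ij} \stackrel{d}{=} D$, so that $\{Z_{ij}\}_{j \ge 0}$ is independent and the decomposition $D_i = Z_{i0} + \sum_{j=1}^{\mfK_i} Z_{ij}$ required by Equation~\eqref{eq:decomp} holds; the truncations $D_{ij}^{\le tT}$ appearing in Equation~\eqref{eq:cs2} are absorbed through the remainder mechanism of Definition~\ref{def:remaindert}.

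The tail assumptions are dispatched first. Assumption~\ref{assumption1} is exactly Proposition~10 of \cite{bcw23}, which yields $D \in \text{RV}(\R_+ \setminus \{0\}, v(T), \mu' \circ g^{-1})$ under the joint regular variation of $(X, \kappa_A)$ required by Assumption~1 of \textbf{(A.~H)}; Assumption~\ref{assumption2} carries over unchanged. Assumption~\ref{assumption4} holds with $c = 1$ for each $j \ge 1$. For Assumption~\ref{assumption3}, conditioning on $A_{i0}$ makes $L_{A_{i0}}$ Poisson with mean $\kappa_{A_{i0}}$, so a standard tail computation for mixed Poisson laws combined with the regular variation of $\kappa_A$ (the second-coordinate projection of $\mu'$) gives $\p{L_{A_{i0}} > x} \sim c' \p{D > x}$; the conditional independence of $\{W_{ij}\}_{j \ge 1}$ given $\cF_{FG, i} := \sigma(A_{i0}, L_{A_{i0}})$ is inherited from the Poisson construction of the first-generation offspring process.

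The main obstacle is Assumption~\ref{assumption6} on the path waiting time $Q_{ij}$, which in the Hawkes case is a sum along a genealogical path from the immigrant to event $j$ and therefore spans several generations. Letting $\mathfrak{d}_i$ denote the depth of the $i$-th Galton--Watson tree and $W^{\star}_i$ the largest inter-generational waiting time inside cluster $i$, one has $Q_{ij} \le \mathfrak{d}_i \, W^{\star}_i$, hence $\{Q_{ij} > \epsilon T\} \subseteq \{\mathfrak{d}_i > T^{\delta'}\} \cup \{W^{\star}_i > \epsilon T^{1 - \delta'}\}$ for a judiciously chosen $\delta' > 0$. A union bound conditional on $K_i$, combined with Assumption~6' of \textbf{(A.~H)} and $\E{K_i} < \infty$ (a consequence of subcriticality; see Remark~\ref{rem:gw}), controls the second set, while exponential tail estimates for the depth of a subcritical Galton--Watson tree control the first. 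Assumption~\ref{assumption5} is established by the same scheme, with $\mfM_i$ the number of first-generation offspring whose sub-cluster extends past $T - \Gamma_i$, $\cF_{CI, i} = \sigma(A_{i0}, L_{A_{i0}}, \{A_{ij}, W_{ij}\}_{j \ge 1})$, and the conditional probability going to zero since each sub-cluster has almost surely finite extent under the subcriticality condition.

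Finally, Assumption~\ref{assumption7} reduces to a Campbell-type computation: using $\E{\sum_{i=1}^{N_0(tT)} Y_i} = \lambda t T \, \E{Y_1}$ for i.i.d.\ per-cluster quantities $Y_i$, the centering discrepancy in sup-norm is, up to asymptotically negligible boundary terms, controlled by $x_T^{-1} \lambda T \, \E{D \, \I{\text{some event past } tT}}$, which is $\smallO(1)$ uniformly in $t \in [0,1]$ by Assumption~6' and dominated convergence (since $\E{D} < \infty$). Intersecting with the risk scenario $\{d_{M_1}(x_T^{-1} \tilde R_T, \D_k) > r\}$ and using the order-statistics bound $v'(x_T)^{k+1} \p{D_{(N-k)} > 2r x_T} = \cO(1)$ then produces the required $\smallO(1)$ rate. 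Once all assumptions are verified, Theorem~\ref{thm:clustrv} delivers the conclusion, and the explicit form of $\mu = \mu' \circ g^{-1}$ identified at the outset produces precisely the stated limit measure $\mu_{k+1}^{\#}$.
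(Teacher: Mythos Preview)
Your high-level strategy matches the paper's: verify Assumptions~\ref{assumption1}--\ref{assumption7} for the Hawkes submodel and invoke Theorem~\ref{thm:clustrv}. The treatment of Assumptions~\ref{assumption1}--\ref{assumption4} is essentially right (the relevant tail result for $D$ in \cite{bcw23} is Proposition~6, not Proposition~10), and your bound $Q_{ij} \le \mathfrak{d}_i\, W_i^{\star}$ for Assumption~\ref{assumption6} is a legitimate alternative to the paper's generation-by-generation splitting at thresholds $M$ and $T^{\delta}$.

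The genuine gap is Assumption~\ref{assumption5}. With your choice of $\mfM_i$ (first-generation offspring whose sub-cluster crosses $T$) the only natural candidates for $(Z_{ij}, Q_{ij})$ in Definition~\ref{def:remaindert} are $Z_{ij} = D_{ij}$ and $Q_{ij}$ equal to the extent of sub-cluster $j$; but then $Z_{ij}$ and $Q_{ij}$ are strongly dependent (a large sub-cluster sum correlates with a long extent), violating the independence of $\{Z_j\}$ from $(\mfM, \{Q_j\})$ that Lemma~\ref{lem:sbj} requires. Taking instead $Q_{ij} = W_{ij}$ restores independence but under-counts the remainder, since sub-clusters starting before $T$ yet extending past it are missed. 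The paper resolves this by descending one generational level further: it ranges over \emph{all} events $l$ of cluster $i$ landing in $[0,T]$, sets $\mfM_i = \sum_{l \le N_H(T)} L_{A_{il}}$, $Q_{ilj} = \tau_{il} + W_{ilj}$, $Z_{ilj} = D_{ilj}$, and $\cF_{CI,i} = \sigma((\tau_{il}, A_{il}): \tau_{il} \le T)$; conditional on this $\sigma$-algebra the $D_{ilj}$ are genuinely independent of the $Q_{ilj}$. The tail condition $\p{\mfM_i > x} \sim c\,\p{D_i > x}$ for this larger $\mfM_i$ is then non-trivial and is obtained via a random-walk hitting-time argument borrowed from \cite{af18} and \cite{fz03}; your one-line appeal to ``almost surely finite extent under subcriticality'' does not touch any of this, and the conditioning on $\{D > x_T\}$ (which forces the cluster to be large) makes such unconditioned finiteness statements beside the point. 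Your Assumption~\ref{assumption7} sketch is likewise incomplete: the paper's argument needs Campbell's theorem together with an explicit computation of $\E{\E{L_{A_0}\mid \cF_{FG}}\,\p{W \le tTU \mid \cF_{FG}}}$ and a Doob-inequality bound on $\sup_{t}\lvert N_0(tT) - \lambda tT\rvert$, and these steps are where the growth condition on $x_T$ from Assumption~\ref{assumption2} actually enters.
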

\begin{proof}
    The proof of this corollary consists of verifying the set of assumptions from Section~\ref{section:res}, and is relegated to Appendix~\ref{app:pchawkes}. 
\end{proof}

\begin{rem}
Considering the (constant) process $t \in (0,1] \rightarrow \hat{R}_T^{MB}(1)$ for $\hat{R}_T^{MB}(\cdot)$ in Equation~\eqref{eq:cs1} yields \begin{equation}\label{eq:stmb} S_T^{MB} - \E{S_T^{MB}} \defeq \hat{R}^{MB}_T(1) = \sum_{i=1}^{N_0(T)} \sum_{j=0}^{K_{A_{i0}}} X_{ij}  \I{\Gamma_i + W_{ij} \le T} - \E{\sum_{i=1}^{N_0(T)} \sum_{j=0}^{K_{A_{i0}}} X_{ij}  \I{\Gamma_i + W_{ij} \le T}} \end{equation} and, similarly the process $t \in (0,1] \rightarrow \hat{R}_T^{H}(1)$ for $\hat{R}_T^{H}(\cdot)$ in Equation~\eqref{eq:cs2} \begin{equation}\label{eq:sthawkes} S_T^{H} - \E{S_T^{H}} \defeq \hat{R}_T^{H}(1) = \sum_{i=1}^{N_0(T)} \sum_{j=0}^{L_{A_{i0}}} D_{ij}^{\le T} \, \I{\Gamma_i + W_{ij} \le T} - \E{\sum_{i=1}^{N_0(T)} \sum_{j=0}^{L_{A_{i0}}} D_{ij} \, \I{\Gamma_i + W_{ij} \le T}} \end{equation} where the definition of $S_T - \E{S_T}$ matches the notations of Proposition 10 in \cite{bcw23}. Using the elements in Section~\ref{section:res} designed to prove Theorem~\ref{thm:clustrv}, it is possible to state the following corollary. 

\begin{cor}\label{cor:nonfunc}
    For $S_T^{MB}$ defined in Equation~\eqref{eq:stmb}, under Assumptions \textbf{(A. MB. nf)}, it holds, for $k \ge 0$, $$v'(x_T)^{k+1} \p{x_T^{-1} (S_T^{MB} - \E{S_T^{MB}}) \in \cdot } \rightarrow \bar{\mu}_{k+1}^{\#}(\cdot)\, \text{ in } \M(\R_+ \setminus \{0\}),\, \text{ as } T \rightarrow \infty, $$ and, for $S_T^{H}$ defined in Equation~\eqref{eq:sthawkes}, under Assumptions \textbf{(A. H. nf)}, it holds, for $k \ge 0$, $$v'(x_T)^{k+1} \p{x_T^{-1} (S_T^{H} - \E{S_T^{H}}) \in \cdot } \rightarrow \bar{\mu}_{k+1}^{\#}(\cdot)\, \text{ in } \M(\R_+ \setminus \{0\}),\, \text{ as } T \rightarrow \infty, $$ where, for Borel $B \in \cB(\R_+ \backslash \{0\}),$\, $$\bar{\mu}_{k+1}^{\#}(B) = \frac{1}{(k+1)!} \int_{\R_+^{k+1}} \I{\sum_{i=1}^{k+1} y_i \in B} \lambda^{k+1}\mu(\diff y_1) \cdots \mu(\diff y_{k+1}), $$ with $\mu(\cdot)$ defined as in Corollary~\ref{cor:mb}, resp. Corollary~\ref{cor:hawkes}.  
\end{cor}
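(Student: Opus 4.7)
The plan is to mimic the scalar-valued analogue of the proof of Theorem~\ref{thm:clustrv}. Throughout, the $M_1$ metric is replaced by the absolute value on $\R$, which means the temporal spread of jumps within a single cluster no longer matters: only whether a triggered event falls inside $[0,T]$. Consequently Assumption~\ref{assumption6} (resp.\ \textbf{(A. H.f)}) becomes superfluous, so \textbf{(A. MB.nf)} / \textbf{(A. H.nf)} suffice. I would not attempt a direct continuous mapping from the functional statements in Corollaries~\ref{cor:mb}--\ref{cor:hawkes}, since those presuppose the very assumption we wish to discard.

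First I would establish the scalar counterpart of Proposition~\ref{prop:rvd}, namely
\[
v'(x_T)^{k+1}\, \p{x_T^{-1}\tilde R_T(1) \in \cdot} \to \bar{\mu}_{k+1}^{\#}(\cdot) \text{ in } \M(\R_+\setminus\{0\}),
\]
for the centered compound Poisson sum $\tilde R_T(1)=\sum_{i=1}^{N_0(T)}(D_i-\E{D})$. Since Proposition~\ref{prop:rvd} is stated for the path under $M_1$ and the evaluation map at a continuity point is $M_1$-continuous, and since the limiting paths have jump times uniformly distributed on $[0,1]$ (hence no mass at $t=1$), the scalar version follows from the path version by an application of the continuous mapping theorem; Assumptions~\ref{assumption1}--\ref{assumption2} are the only hypotheses invoked, and both sit inside \textbf{(A. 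MB.nf)} and \textbf{(A. H.nf)}. Next, Lemma~\ref{lem:slutsky} specialised to $(E,F)=(\R_+,\{0\})$ reduces the corollary to verifying, for all $\epsilon,r>0$,
\[
v'(x_T)^{k+1}\, \p{\lvert x_T^{-1}(\tilde R_T(1) - \hat R_T(1))\rvert > \epsilon,\; x_T^{-1}\tilde R_T(1) > r} = \smallO(1).
\]

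A direct rewriting identifies $\tilde R_T(1)-\hat R_T(1)$ with the centered cluster-remainder sum $\sum_{i=1}^{N_0(T)}(D_i^{>T}-\E{D_i^{>T}})$, with $D_i^{>T}$ as in Definition~\ref{def:remaindert}. I would proceed exactly along the decomposition $\mathcal{T}_1+\mathcal{T}_2+\mathcal{T}_{31}+\mathcal{T}_{32}$ used in the proof of Theorem~\ref{thm:clustrv}: $\mathcal{T}_1$ follows verbatim from Proposition~\ref{prop:t1}, since that proposition is already a statement in the $M_1$ topology whose specialisation to $t=1$ is immediate. In place of $\mathcal{T}_2$ (whose $M_1$ version is the step truly needing Assumption~\ref{assumption6}), the scalar version reduces to showing that the sum of the $(k+1)$ largest cluster remainders $\sum_{i=0}^{k} D_{(N-i)}^{>T}$ is $\smallO_p(x_T)$ on the rare event $\{D_{(N-k)}>2rx_T\}$ at speed $v'(x_T)^{k+1}$; this uses only the conditional decomposition and $\smallO(1)$ decay built into Assumption~\ref{assumption5}. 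The term $\mathcal{T}_{31}$ copies the proof in Theorem~\ref{thm:clustrv} (with the leftover $x_T^{-1}\sum_{i=0}^{k}\E{D}=\smallO(1)$ absorbed harmlessly), and $\mathcal{T}_{32}$ is handled by Assumption~\ref{assumption7} evaluated at $t=1$, all of which are verified inside the proofs of Corollaries~\ref{cor:mb} and~\ref{cor:hawkes}.

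The main obstacle is the scalar replacement of $\mathcal{T}_2$: proving that $\sum_{i=0}^{k} D_{(N-i)}^{>T}$ is negligible of order $x_T$ on the event $\{D_{(N-k)}>2rx_T\}$ at speed $v'(x_T)^{k+1}$, \emph{without} any tail control on $W_{ij}$. The key is the conditional structure of Assumption~\ref{assumption5}: given $\cF_{CI,i}$, each $D_i^{>T}$ is a sum of i.i.d.\ nonnegative variables thinned by a probability $\p{Q_{ij}>T-\Gamma_i\mid\cF_{CI,i}}$ which vanishes a.s.\ as $T\to\infty$. A Markov inequality applied to the conditional expectation of the sum, combined with $\E{Z}<\infty$, the integrability $\E{\mfM_i\cdot Z}<\infty$ transferred from the subcriticality of the model, and the Facts~1--2 on $N_0(T)/T$, yields the required $\smallO_p(x_T)$ bound. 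Intersecting with $\{D_{(N-k)}>2rx_T\}$ and using Assumption~\ref{assumption1} together with the independence between $D_i$ and the internal waiting-time structure of the $i$th cluster gives the advertised speed $v'(x_T)^{k+1}$. Once this piece is in place, the corollary follows for both $S_T^{MB}$ and $S_T^{H}$, since Assumptions~\ref{assumption3}--\ref{assumption5} and~\ref{assumption7} are already established for these two submodels inside the proofs of Corollaries~\ref{cor:mb} and~\ref{cor:hawkes}.
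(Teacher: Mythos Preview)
Your overall strategy matches the paper's: both recognise that the functional results in Corollaries~\ref{cor:mb}--\ref{cor:hawkes} cannot be invoked directly (they require Assumption~\ref{assumption6}), and that the argument must be re-run at $t=1$ where the temporal component of the $M_1$ distance---the only place Assumption~\ref{assumption6} enters---becomes irrelevant. The paper phrases this as a continuous mapping $\pi(f)=f(1)$ together with the observation that at $t=1$ the proof of Proposition~\ref{prop:t2} reduces to controlling the cluster remainder terms; you phrase it as a scalar re-derivation of the $\T_1+\T_2+\T_{31}+\T_{32}$ decomposition. These are the same plan.

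The gap is in your treatment of the scalar $\T_2$. You propose a Markov inequality on the conditional expectation of $D_i^{>T}$, appealing to $\E{\mfM_i}<\infty$, $\E{Z}<\infty$, and the a.s.\ vanishing of $\p{Q_{ij}>T-\Gamma_i\mid\cF_{CI,i}}$. But the relevant bound is on the \emph{conditional} probability $\p{D_i^{>T}>\epsilon x_T\mid D_i>2rx_T}$, and conditioning on $\{D_i>2rx_T\}$ forces $\mfM_i$ to be of order $x_T$ with positive limiting probability whenever the constant $c>0$ in Assumption~\ref{assumption5}. Hence $\condE{D_i^{>T}}{D_i>2rx_T}$ is only $O(x_T)$, not $o(x_T)$, and a first-moment bound yields $O(1)$ rather than $o(1)$. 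Your claimed ``independence between $D_i$ and the internal waiting-time structure of the $i$th cluster'' is false: $D_i$ and $D_i^{>T}$ share the cluster size $\mfM_i$ (and, in the Hawkes case, the entire genealogy up to time $T$). The paper handles this step via Lemma~\ref{lem:sbj}, a conditional single-big-jump bound adapted from Proposition~3.1 of \cite{olvera12}, which separates the contributions of one large $Z_j$, two moderately large $Z_j$'s, and the many-small-terms regime under the conditioning $\{D>x_T\}$; this machinery, not a first-moment bound, is what delivers $\p{D^{>T}>x_T\mid D>x_T}=\smallO(1)$ from Assumptions~\ref{assumption1}--\ref{assumption5} alone.
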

\begin{proof}
    For brevity, we give a sketch of the proof. The stated result follows from applying a continuous mapping argument: define $\pi:\D\to\R_{+}$, $\pi(f)=f(1)$, which is continuous except at paths that jump at $t=1$, and that discontinuity set has zero mass under both limit measures $\mu_{k+1}^{\#}(\cdot)$ and $\bar{\mu}_{k+1}^{\#}(\cdot) \defeq \pi(\mu_{k+1}^{\#}(\cdot))$ (jump times are a.s. in $(0,1)$). The sets of assumptions \textbf{(A. MB. nf)} and \textbf{(A. H. nf)} allow us to prove all sufficient conditions for both processes to fulfill the assumptions from Section~\ref{section:res} (see the proof of Corollary~\ref{cor:mb} and Corollary~\ref{cor:hawkes} in Appendix~\ref{app:pcmb} and~\ref{app:pchawkes}), except for Assumption 6 which is not needed in the non-functional setting. Indeed, at $t=1$, the proof of Proposition~\ref{prop:t2} relies on showing that the remainder terms of both models are negligible, which is shown by appealing to Lemma~\ref{lem:sbj} which only relies on Assumptions 1 to 5 from Section~\ref{section:res}. 
\end{proof}

By Assumption~\ref{assumption2}, the above corollary holds for scaling sequences $x_T$ corresponding to large deviation regimes $x_T^{-1} = \smallO(T^{-\max(1/\alpha,\, 1/2)})$, which is an improvement over the large deviation principle of Proposition 10 in \cite{bcw23}, which holds over $x-$regions of the form $x \ge \gamma \lambda T$, for every fixed $\gamma > 0.$ However, this latter result holds uniformly over the $x-$region considered, which is not the case of Corollary~\ref{cor:nonfunc}. Finally, note that, for each $k \ge 0$, Corollary~\ref{cor:nonfunc} also provides the correct regular variation speed $v'(x_T)^{k+1}$. 
\end{rem}

\begin{rem}\label{rem:Worder}
We give some intuition into Assumption 6, specifically in the context of the Hawkes process. This condition is the price to pay to establish a functional large deviation principle for $\tilde{R}_T(\cdot)$ with respect to the $M_1$ topology. 
It restricts the moments of $W$ by considering its interplay with $x_T$. Indeed, suppose that $x_T \equiv T^{\eta}$, with $\eta > \max(1/\alpha, 1/2)$ for $\alpha > 1$, its exact order dictating the asymptotic regime in Assumption 2. Then, Assumption 6 requires, for some small $\delta > 0$, that $\E{W^{\eta - \delta}} < \infty.$
This constrains slightly the (potential) heavy-tailedness of $W$, depending on the asymptotic regime considered. 
\end{rem}  

\section{Proof of Result in Subsection~\ref{subsection:slutsky}}\label{section:proof2}

\subsection{Proof of Lemma~\ref{lem:slutsky}}\label{secp:slutsky}
    \begin{proof}[Proof of Lemma~\ref{lem:slutsky}]
          The proof essentially follows the ones of Theorem 3.1 in \cite{billingsley99} and of Proposition 2.4 in \cite{dtw22}. 
    Let $r > 0$, $B \in \cB(E \setminus F^r)$ be a $\mu$-continuity set, and take $E \setminus F^r$ also to be $\mu$-continuous (see Theorem 2.2 in \cite{hl06}). For an upper bound, note that 
    \begin{align*} v(x_T) \p{x_T^{-1} Y_T^{(2)} \in B} &= v(x_T) \p{x_T^{-1} Y_T^{(2)} \in B, d(x_T^{-1} Y_T^{(1)}, x_T^{-1} Y_T^{(2)}) \le \epsilon} \\
    &\qquad + v(x_T) \p{x_T^{-1} Y_T^{(2)} \in B, d(x_T^{-1} Y_T^{(1)}, x_T^{-1} Y_T^{(2)}) > \epsilon} \\
    &\le v(x_T) \p{x_T^{-1}Y_T^{(1)} \in B^{\epsilon}} + v(x_T) \p{x_T^{-1} Y_T^{(2)} \in B, d(x_T^{-1} Y_T^{(1)}, x_T^{-1} Y_T^{(2)}) > \epsilon}
    \end{align*}
    with $B^{\epsilon} = \{x \in E: d(x, B) \le \epsilon \}.$ Take $\epsilon < r/2$ and note that $B^{\epsilon} \in \cB(E \setminus F^{r/2})$. Since  the distribution of the scaled $Y_T^{(1)}$ converges in $\M(E \setminus F)$, it holds that 
    $$\limsup_{T \rightarrow \infty} v(x_T) \p{x_T^{-1} Y_T^{(1)} \in B^{\epsilon}} \le \mu(\text{cl }B^{\epsilon}).$$
    Now, because $B$ is a $\mu$-continuity set, letting $\epsilon \rightarrow 0$ implies by monotone convergence that $$\limsup_{T \rightarrow \infty}v(x_T) \p{x_T^{-1} Y_T^{(1)} \in B^{\epsilon}} \le \mu(\text{cl } B) = \mu(B).$$
    On the other hand, first note that $$\{x_T^{-1}Y_T^{(2)} \in B \} \subseteq \{x_T^{-1} Y_T^{(2)} \in B, d(x_T^{-1} Y_T^{(2)}, F) > r \} \cup \{x_T^{-1} Y_T^{(2)} \in B, d(x_T^{-1} Y_T^{(2)}, F) \le r \} \subseteq \{d(x_T^{-1} Y_T^{(2)}, F) > r \}, $$ the second event being empty by the choice of $B \in \cB(E \setminus F^r).$ 
    By the assumption of the lemma, it follows that 
    \begin{align*}\limsup_{T \rightarrow \infty}\, &v(x_T) \p{x_T^{-1} Y_T^{(2)} \in B, d(x_T^{-1} Y_T^{(1)}, x_T^{-1} Y_T^{(2)}) > \epsilon} \\ &\qquad \le \limsup_{T \rightarrow \infty} v(x_T) \p{d(x_T^{-1} Y_T^{(2)}, F) > r, d(x_T^{-1} Y_T^{(1)}, x_T^{-1} Y_T^{(2)}) > \epsilon} = 0 \end{align*} and hence, that 
    $$\limsup_{T \rightarrow \infty} v(x_T) \p{x_T^{-1} Y_T^{(2)} \in B} \le  \mu(B).$$

    For the lower bound, note that 
    $$v(x_T) \p{x_T^{-1} Y_T^{(2)} \in B} = v(x_T) \p{x_T^{-1} Y_T^{(2)} \in E \setminus F^r} - v(x_T) \p{x_T^{-1} Y_T^{(2)} \in E \setminus F^r \cap B^c }$$ and using the upper bound above, it follows that 
    \begin{align*}
        \liminf_{T \rightarrow \infty} v(x_T) \p{x_T^{-1} Y_T^{(2)} \in B} &\ge \liminf_{T \rightarrow \infty} v(x_T) \p{x_T^{-1} Y_T^{(2)} \in E \setminus F^r} - \limsup_{T \rightarrow \infty} v(x_T) \p{x_T^{-1} Y_T^{(2)} \in E \setminus F^r \cap B^c } \\
        &\ge \liminf_{T \rightarrow \infty} v(x_T) \p{x_T^{-1} Y_T^{(2)} \in E \setminus F^r} - \mu(E \setminus F^r \cap B^c).  
    \end{align*}
    Now, as in the proof of Proposition 2.4 in \cite{dtw22}, one can show that $$\p{x_T^{-1} Y_T^{(2)} \in E \setminus F^r} \ge \p{x_T^{-1} Y_T^{(1)} \in E \setminus F^{r+\epsilon}} - \p{d(x_T^{-1} Y_T^{(1)}, x_T^{-1} Y_T^{(2)}) > \epsilon, x_T^{-1} Y_T^{(1)} \in E \setminus F^{r+\epsilon}}$$
    which yields, by similar arguments as for the upper bound (using the assumption on the $\limsup$) and monotone convergence as $\epsilon \rightarrow 0$, since $E \setminus F^r$ is a $\mu$-continuity set, that
    \begin{align*}
       \liminf_{T \rightarrow \infty} v(x_T) \p{x_T^{-1} Y_T^{(2)} \in E \setminus F^r} \ge \mu(\text{int } E \setminus F^r) = \mu(E \setminus F^r).
    \end{align*}
    All in all, this shows that $$\mu(B) \le \liminf_{T \rightarrow \infty} v(x_T) \p{x_T^{-1} Y_T^{(2)} \in B} \le \limsup_{T \rightarrow \infty} v(x_T) \p{x_T^{-1} Y_T^{(2)} \in B} \le \mu(B).$$
    \end{proof}

\section{Proof of Results of Section~\ref{section:res}}\label{section:proof4}

\subsection{Proof of Proposition~\ref{prop:t1}}\label{app:pt1}
   
\begin{proof}[Proof of Proposition~\ref{prop:t1}]
    This proof is an adaptation of the proof of Theorem 4.3 in \cite{dtw22}. We split the cases for $k$, as the situation where $k=0$ serves as a building block to prove the proposition in full generality. 

    \textbf{Case of $k=0$:}
    
    We start by noticing, from Lemma 6.1 in \cite{dtw22}, that 
    $$\{d_{M_1} (x_T^{-1}\tilde{R}_T, \D_0) > r\} \subseteq \{\Delta_1(x_T^{-1} \tilde{R}_T^{0}) > 2r\} = \{D_{(N)} > 2rx_T \}$$
    where $\Delta_{1}(R)$ denotes the largest jump of $R$ and $\tilde{R}^{0}_T$ is the non-centered version of $\tilde{R}_T$, and just as in \cite{dtw22}, it is possible to neglect the centering term in front of $x_T$, as $T \rightarrow \infty.$ Hence, the distance between the removed cone $\D_0$ and the process is upper-bounded by the largest jump $D_{(N)}$ (among the $N_0(T)$ possible jumps) of the process. 

     Let $c = \E{D}$ and note that it holds that  
\begin{align*}
   \T_1 &\defeq  v'(x_T) \p{d_{M_1}\big(x_T^{-1}\tilde{R}_T,\,  x_T^{-1} D_{(N)} \I{\Gamma_{(N)}/T \le t}\big) > \epsilon,\,  d_{M_1}(x_T^{-1}\tilde{R}_T, \D_0) > r} \notag \\
    &\le v'(x_T) \p{\sup_{0 \le t \le 1} \Big\lvert x_T^{-1} \Big( \sum_{i=1}^{N_0(tT)} \big(D_i - \E{D}\big)\I{\Gamma_{i}/T \le t} - D_{(N)}\I{\Gamma_{(N)}/T\le t} \Big\rvert > \epsilon, \, D_{(N)} > 2rx_T } \\
    &\le v'(x_T)\p{\sup_{0 \le t \le 1} \Big\lvert x_T^{-1} \Big( \sum_{i=1}^{N_0(tT)} \big(D_i - c\big)\I{\Gamma_{i}/T \le t} - \big(D_{(N)} - c \big)\I{\Gamma_{(N)}/T \le t}   \Big) -x_T^{-1}c\I{\Gamma_{(N)}/T<t} \Big\rvert > \epsilon, \, D_{(N)} > 2rx_T } \notag
\end{align*}
where the extra term in the last upper bound comes from the centering of $D_{(N)}$. 

We proceed similarly as in the proof of Theorem 4.3 in \cite{dtw22}, and we consider the classical probability integral transform, for any $i \ge 1$, $D = F^{\leftarrow}_D(U_i)$ for some $U_i \sim \text{Unif}[0,1]:$
{\footnotesize
\begin{align*}
     \T_{1} &\le v'(x_T) \p{\sup_{0 \le t \le 1} \Big\lvert x_T^{-1} \Big( \sum_{i=1}^{N_0(tT)} \big(D_i - c\big)\I{\Gamma_{i}/T \le t} - \big(D_{(N)} - c \big)\I{\Gamma_{(N)}/T \le t}   \Big) - x_T^{-1}c\I{\Gamma_{(N)}/T \le t}  \Big\rvert > \epsilon, \, D_{(N)} > 2r x_T} \\
    &\le v'(x_T)\p{\sup_{0 \le t \le 1} \Big\lvert x_T^{-1} \Big( \sum_{i=1}^{N_0(tT)} \big(F^{\leftarrow}_D(U_i) - c\big)\I{\Gamma_{i}/T \le t} - (F^{\leftarrow}_D(U_{(N)})-c)\I{\Gamma_{(N)}/T \le t} \Big) - x_T^{-1}c\I{\Gamma_{(N)}/T \le t}  \Big\rvert > \epsilon,\, U_{(N)} > F_D(2r x_T) }. 
\end{align*}
}

Now, if $\sigma(j)$ denotes the rank of observation $i$ over the interval $[0,T]$, i.e. $(U_i)_{1 \le i \le N_0(T)}= (U_{(\sigma(i))})_{1 \le i \le N_0(T)}$, we note that there exists a unique permutation $\sigma'$ of the indices $\{1, \ldots, N_0(T)-1\}$ such that the smallest order statistics appear in the same order in the sequences $(U_i)_{1 \le i \le N_0(T)}$ and $(U_{(\sigma'(i))})_{1 \le i \le N_0(T)-1}$, meaning that according to this permutation, the last position actually consists of the largest order statistics. This means that the above upper bound is in fact
\begin{align*}
    \T_{1} &\le v'(x_T)\E{\p{\max_{1 \le t \le N_0(T)-1} \Big\lvert x_T^{-1} \sum_{i=1}^{t} \big(F^{\leftarrow}_D(U_{(\sigma'(i))}) - c\big) - x_T^{-1}c  \Big\rvert > \epsilon/2, \, U_{(N)} > F_D(2r x_T) \mid N_0(T)}}. 
\end{align*}
If we condition on the maximal value $U_{(N)} = u$, the permutation $\sigma'$ over $\{1, \ldots, N_0(T)-1\}$ is uniform and independent of $(U_i)_{1 \le i \le N_0(T)-1}$ and $(V_i)_{1 \le i \le N_0(T)-1} = (U_{(\sigma'(i))}/u)_{1 \le i \le N_0(T)-1}$ is independent uniform on~$[0,1]$. Using Etemadi's inequality, 
\begin{align*}
    \T_{1} &\le 3 v'(x_T) \E{\E{\max_{1 \le t \le N_0(T)-1} \p{\Big\lvert \sum_{i=1}^{t} x_T^{-1}\big(F_D^{\leftarrow}(U_{(N)}V_i) - c\big) - x_T^{-1}c  \Big\rvert > \epsilon/6 } \I{U_{(N)} > F_D(2rx_T) } \mid N_0(T)}  }.
\end{align*}

Using a triangular inequality, one can neglect the additional centering term $x_T^{-1}c = x_T^{-1} \E{D}$ asymptotically, since $D$ is regularly varying with index $\alpha > 1$ by assumption and, hence, $\E{D} < \infty.$

Below, we want to show that \begin{equation}\label{eq:maxcond}
    \max_{1 \le t \le N_0(T)-1} \p{\Big\lvert \sum_{i=1}^{t} x_T^{-1}\big(F_D^{\leftarrow}(U_{(N)}V_i) - c\big) - x_T^{-1}c  \Big\rvert > \epsilon/6 } \le C N_0(T) \p{D > \delta x_T}
\end{equation} for some constant $C > 0$, because, then, it will follow that the above upper bound is further bounded from above by 
\begin{align}
    \T_1 &\le 3 v'(x_T) \E{ C N_0(T) \p{D > \delta x_T} \E{ \I{U_{(N)} > F_D(2rx_T) } \mid N_0(T)}}\label{eq:T1-bound} \\
    &\le 3 C v(x_T)\p{D > \delta x_T} \E{\frac{N_0(T)}{v(x_T)} v'(x_T)\E{ \I{U_{(N)} > F_D(2rx_T) } \mid N_0(T)}} \notag.
\end{align}
Upon using the properties (specifically the density $f_{D_{(N)}}$) of the order statistics $D_{(N)}$ (see e.g. Chapter 2 in~\cite{ans13}), we have 
\begin{align*}
    v'(x_T) \E{\I{D_{(N)}> 2rx_T} \mid N_0(T)} &= v'(x_T) \int_{2rx_T}^{\infty} f_{D_{(N)}}(z) \diff z = v'(x_T) \int_{2rx_T}^{\infty} \frac{N_0(T)!}{(N_0(T)-1)!} F_D(z)^{N_0(T)-1} f_D(z) \diff z \\
    &\le v'(x_T) N_0(T) \p{D > 2rx_T}.
\end{align*}
Now, this yields as a further upper bound, recalling $v'(x_T) = v(x_T) / T$, and using the Poisson property of $N_0$, 
\begin{align*}
    \T_1 &\le 3 C v(x_T)\p{D > \delta x_T} \E{\frac{N_0(T)}{v(x_T)} v'(x_T)\E{ \I{U_{(N)} > F_D(2rx_T) } \mid N_0(T)}} \\
    &\le 3 C \big(v(x_T)\p{D > \delta x_T}\big)\big(v(x_T)\p{D > 2r x_T}\big) \E{\frac{N_0(T)^2}{T} } \\
    &\le 3 C \big(v(x_T)\p{D > \delta x_T}\big)\big(v(x_T)\p{D > 2r x_T}\big) \Big(\frac{\lambda + \lambda^2 T}{v(x_T)}\Big)
\end{align*}
and, because $D$ is regularly varying, $v(x_T)\p{D > \delta x_T} = \cO(1)$ and  $v(x_T) \p{D > 2rx_T} = \cO(1),$ as $T  \rightarrow \infty$, while the choice of $v(x_T)$ (which is regularly varying with index $\alpha > 0$) and by Assumption~\ref{assumption2}, this ensures that $T/v(x_T) = \smallO(1),$ as $T  \rightarrow \infty$, and this allows us to conclude that $$\T_1 = \smallO(1), \text{ as } T \rightarrow \infty.$$

We hence show that the maximum over the inner probability can be controlled as $T \rightarrow \infty$. Fix $U_{(N)} = u$ and note that, for $\delta > 0$, using a union bound, 
\begin{align*}
    \p{\Big\lvert \sum_{i=1}^{t} x_T^{-1}\big(F_D^{\leftarrow}(u V_i) - c\big) \Big\rvert > \epsilon/6 } &\le \p{\Big\lvert \sum_{i=1}^{t} x_T^{-1}\big(F_D^{\leftarrow}(u V_i) \I{F_D^{\leftarrow}(u V_i)  \le \delta x_T} - c\big) \Big\rvert > \epsilon/12 } \\
    &\quad + \p{\Big\lvert \sum_{i=1}^{t} x_T^{-1}\big(F_D^{\leftarrow}(u V_i) \I{F_D^{\leftarrow}(u V_i)  > \delta x_T} - c\big) \Big\rvert > \epsilon/12 } \\
    &\eqdef \T_{11} + \T_{12}.
\end{align*}

We start by treating $\T_{12}$, which is handled in the same way for any $\alpha > 1$; in particular, using Markov inequality of order 1, note that 
\begin{align*}
     \p{\Big\lvert \sum_{i=1}^{t} x_T^{-1}\big(F_D^{\leftarrow}(u V_i) \I{F_D^{\leftarrow}(u V_i)  > \delta x_T} - c\big) \Big\rvert > \epsilon/12 } &\le 12 \epsilon^{-1} \sum_{i=1}^t \E{x_T^{-1} F^{\leftarrow}_D(uV_i) \I{F^{\leftarrow}_D(uV_i) > \delta x_T}}
\end{align*}
and, using a change of variable, one has $$\E{F^{\leftarrow}_D(uV_i) \I{F^{\leftarrow}_D(uV_i) > \delta x_T}} = \int_{F_D(\delta x_T) / u}^{1} F^{\leftarrow}_D(uv) \diff v = u^{-1} \int_{\delta x_T}^{1} x F(\diff x) = u^{-1} \E{D_i \I{D_i > \delta x_T}}.$$
Because the $V_i$s are i.i.d., it follows that 
\begin{align*}
    \p{\Big\lvert \sum_{i=1}^{t} x_T^{-1}\big(F_D^{\leftarrow}(u V_i) \I{F_D^{\leftarrow}(u V_i)  > \delta x_T} - c\big) \Big\rvert > \epsilon/12 } &\le 12 \epsilon^{-1} t u^{-1} \E{x_T^{-1} D \I{D > \delta x_T}}
\end{align*}
Using Karamata's Theorem (see e.g. Proposition 1.4.6 in \cite{ks20}), it follows that
$$\E{x_T^{-1 }D \I{D > \delta x_T}} = \delta \E{(x_T \delta)^{-1} D \I{D > \delta x_T}} \sim \delta  \frac{\alpha}{\alpha-1} \p{D > \delta x_T}, \text{ as } T \rightarrow \infty,$$
which implies that 
\begin{align*}
    \p{\Big\lvert \sum_{i=1}^{t} x_T^{-1}\big(F_D^{\leftarrow}(u V_i) \I{F_D^{\leftarrow}(u V_i)  > \delta x_T} - c\big) \Big\rvert > \epsilon/12 }  &\le 12 \epsilon^{-1} t u^{-1} \delta \frac{\alpha}{\alpha-1} \p{D > \delta x_T} \eqdef C_2(\epsilon, \delta, \alpha, u) t \p{D > \delta x_T}
\end{align*}
for some $C_2(\epsilon, \delta, \alpha, u) > 0$. 



To treat term $\T_{11}$, note it is necessary to replace $c$ by $c_{T, \delta}(u) = \E{F^{\leftarrow}_D(uV_i) \I{F^{\leftarrow}_D(uV_i) \le \delta x_T}}$ (this can always be done -- see matching expression in the proof of Equation 6.6 in \cite{dtw22}). We have to split into two cases: \\ 
\textbf{Case of $\alpha \in (1,2)$:} Using Markov inequality of order 2, we have that 
\begin{align*}
 \p{\Big\lvert \sum_{i=1}^{t} x_T^{-1}\big(F_D^{\leftarrow}(u V_i) \I{F_D^{\leftarrow}(u V_i)  \le \delta x_T} - c\big) \Big\rvert > \epsilon/12 } &\le 12^2 \epsilon^{-2} x_T^{-2}\E{\Big(\sum_{i=1}^t \big(F^{\leftarrow}_D(uV_i) \I{F^{\leftarrow}_D(uV_i) \le \delta x_T} - c_{T,\delta}(u) \big) \Big)^2} \\
 &\le 12^2 \epsilon^{-2}x_T^{-2} t \V{F^{\leftarrow}_D(uV) \I{F^{\leftarrow}_D(uV)\le \delta x_T}}.
\end{align*}
Now, note that $$\V{F^{\leftarrow}_D(uV) \I{F^{\leftarrow}_D(uV)\le \delta x_T}} \le \int_{0}^{F_D(\delta x_T)/u} F^{\leftarrow}_D(uv)^2 \diff v = u^{-1} \E{D^2 \I{D \le \delta x_T}}.$$
Using Karamata's Theorem (see e.g. Proposition 1.4.6 in \cite{ks20}), once again one has that $$x_T^{-2} \E{D^2 \I{D \le \delta x_T}} = \delta^2 \E{\Big((\delta x_T)^{-2} D^2 \I{D \le \delta x_T}} \sim \delta^2 \frac{\alpha}{2-\alpha} \p{D > \delta x_T}, \text{ as } T \rightarrow \infty,$$
which in turn implies for the above probability the upper bound
\begin{align*}
   \p{\Big\lvert \sum_{i=1}^{t} x_T^{-1}\big(F_D^{\leftarrow}(u V_i) \I{F_D^{\leftarrow}(u V_i)  \le \delta x_T} - c\big) \Big\rvert > \epsilon/12 } &\le 12^2 \epsilon^{-2} u^{-1} \delta^2 \frac{\alpha}{2-\alpha} t \p{D > \delta x_T} \eqdef C_1(\epsilon, \delta, \alpha, u) t \p{D > \delta x_T} 
\end{align*}
for some constant $C_1(\epsilon, \delta, \alpha, u) > 0$. 

Now, recollecting terms with the above development for $\T_{12}$, this shows 
\begin{align*}
    \max_{1 \le t \le N_0(T)-1} \p{\Big\lvert \sum_{i=1}^{t} x_T^{-1}\big(F_D^{\leftarrow}(U_{(N)}V_i) - c\big)  \Big\rvert > \epsilon/6 } 
    &\le C(\epsilon, \delta, \alpha, u) N_0(T) \p{D > \delta x_T} 
\end{align*}
where $C(\epsilon, \delta, \alpha, u) \defeq C_1(\epsilon, \delta, \alpha, u) + C_2(\epsilon, \delta, \alpha, u),$ which shows indeed that Equation~\eqref{eq:maxcond} holds in the case $\alpha \in (1,2)$ and we can conclude that $\T_1$ is negligible by the arguments presented above. 

\textbf{Case of $\alpha > 2$:} To treat this case, we use Fuk-Nagaev Inequality (see Equation 2.79 in \cite{petrov95}), which implies in this case that
\begin{align*}
    \p{\Big\lvert \sum_{i=1}^{t} x_T^{-1}\big(F_D^{\leftarrow}(u V_i) \I{F_D^{\leftarrow}(u V_i)  \le \delta x_T} - c\big) \Big\rvert > \epsilon/12 } &\le C_1(p) \epsilon^{-p} t \E{\Big\lvert x_T^{-1}(F^{\leftarrow}_D(uV) \I{F^{\leftarrow}_D(uV) \le \delta x_T}\Big\rvert^p} \\
    &\quad + \exp\Big(- C_2(p) \frac{\epsilon^2}{12^2 t \V{x_T^{-1} F^{\leftarrow}_D(uV) \I{F^{\leftarrow}_D(uV)}}}\Big) \\
    &\le C_1(p) \epsilon^{-p} t  x_T^{-p} \E{\Big\lvert(F^{\leftarrow}_D(uV)^p \I{F^{\leftarrow}_D(uV) \le \delta x_T}\Big\rvert^p} \\
    &\quad + \exp\Big(-C_2(p) \frac{\epsilon^2 x_T^2}{t \V{F^{\leftarrow}_D(uV)}}\Big).
\end{align*}
Note that for the first term in power $p$, one uses again Karamata's Theorem (see e.g. Proposition 1.4.6 in \cite{ks20}) to obtain that 
$$C_1(p) \epsilon^{-p} t  x_T^{-p} \E{\Big\lvert(F^{\leftarrow}_D(uV)^p \I{F^{\leftarrow}_D(uV) \le \delta x_T}\Big\rvert^p} \sim C_1(p) \epsilon^{-p} \delta^p t \p{D > \delta x_T} \eqdef C_1(p, \epsilon, \delta) t \p{D > \delta x_T}.$$ 

Collecting terms with $\T_{12}, $ this yields as an upper bound 
\begin{multline*}
    \max_{1 \le t \le N_0(T)-1}  \p{\Big\lvert \sum_{i=1}^{t} x_T^{-1}\big(F_D^{\leftarrow}(U_{(N)}V_i) - c\big)  \Big\rvert > \epsilon/6 } \\
    \le C(\epsilon, \delta, \alpha, u) N_0(T) \p{D > \delta x_T} + \exp\Big(-C_2(p) \frac{\epsilon^2 x_T^2}{N_0(T)\V{F^{\leftarrow}_D(uV)}}\Big)
\end{multline*}
where $C(\epsilon, \delta, \alpha, u) = C_1(p, \epsilon, \delta) + C_2(\epsilon, \delta, \alpha, u)$. Now, for the second term, writing 
\begin{align*}
    \exp\Big(-C_2(p) \frac{\epsilon^2 x_T^2}{N_0(T)\V{F^{\leftarrow}_D(uV)}}\Big) &= \exp\Big(-C_2(p) \frac{\epsilon^2 x_T^2}{\E{N_0(T)} \V{F^{\leftarrow}_D(uV)}} \cdot \frac{\E{N_0(T)}}{N_0(T)}\Big) \\
    &= \smallO_p(1), \text{ as } T \rightarrow \infty,
\end{align*}
the negligibility in probability following from Assumption~\ref{assumption2} which yields $x_T^2/\E{N_0(T)} \sim x_T^2/T \rightarrow \infty$ as $T  \rightarrow \infty$, and the uniform integrability of $\{N_0(T)/\E{N_0(T)}\}$. 

Hence, wee see that the conditional probability above again satisfies Equation~\eqref{eq:maxcond} and overall this shows once again, in the case $\alpha > 2$, that $$\T_{1} = \smallO(1), \text{ as } T \rightarrow \infty.$$

\textbf{Case of $k \ge 1$:}

We only provide a sketch of the proof, since term $\T_{1}$ is treated exactly as in the case of $k = 0$, by considering 
{\footnotesize
\begin{align*}
     \T_1 &\le v'(x_T)^{k+1} \p{\sup_{0 \le t \le 1} \Big\lvert x_T^{-1} \Big( \sum_{i=1}^{N_0(tT)} \big(D_i - \E{D}\big)\I{\Gamma_{i}\le tT} - \sum_{i=0}^{k} D_{(N-i)}\I{\Gamma_{(N-i)}\le tT}\Big) \Big\rvert > \epsilon, \, \Delta_{k+1}(x_T^{-1}\tilde{R}_T^{0}) > 2r } \\
    &\le v'(x_T)^{k+1}\mathbb{P}\Bigg(\sup_{0 \le t \le 1} \Big\lvert x_T^{-1} \Big( \sum_{i=1}^{N_0(tT)} \big(D_i - c\big)\I{\Gamma_{i}\le tT} - \sum_{i=0}^{k} \big(D_{(N-i)} - c \big)\I{\Gamma_{(N-i)} \le tT}   \Big) - \sum_{i=0}^k x_T^{-1}c\I{\Gamma_{(N-i)}<tT} \Big\rvert > \epsilon, \\
    &\qquad \qquad\qquad\qquad\qquad\qquad\qquad\qquad\qquad\qquad\qquad\qquad\qquad\qquad\qquad\qquad\qquad\qquad\qquad\qquad\qquad \, \Delta_{k+1}(x_T^{-1}\tilde{R}_T^{0}) > 2r \Bigg)
\end{align*}
}
where we let for simplicity $(N-i)$ corresponds to the $(N-i)$th largest order statistics of the exchangeable sequence $\{D_{i}\}_{1 \le i \le N_0(T)}$, $c \defeq \E{D},$ and $\Delta_{k+1}(R)$ denotes the $(k+1)$th largest jump of a generic $R$. 
Compared to the case $k=0$, we have to consider more terms to remove from the initial sum over $N_0(tT)$; hence, the permutation defined in the case of $k=0$ is still valid, and we consider the remaining sum over $\{1, \ldots, N_0(T)-(k+1)\}$ indices, i.e. the upper bound to control for the corresponding expression to $\T_1$ in this case is 
\begin{align*} \T_1 &\le v'(x_T)^{k+1}\E{\p{\max_{1 \le t \le N_0(T)-(k+1)} \Big\lvert x_T^{-1} \sum_{i=1}^{t} \big(F^{\leftarrow}_D(U_{(\sigma'(i))}) - c\big) - x_T^{-1}c  \Big\rvert > \epsilon, \, U_{(N-k)} > F_D(2r x_T) \mid N_0(T)}}. 
\end{align*}
In the general case $k \ge 1$, the equivalent expression to the upper bound in Equation~\eqref{eq:T1-bound}, recalling that $v'(x_T) = v(x_T)/T$, that $N_0(T)$ is Poisson, and using the distribution of the $(k+1)$ largest order statistics in Chapter 2 of \cite{ans13}, is 
\begin{align*}
    \T_1 &\le 3v'(x_T)^{k+1} \E{C N_0(T) \p{D > \delta x_T} \E{\I{U_{(N-k)}>F_D(2rx_T)}\mid N_0(T)}} \\
    &\le 3 C\big(v(x_T) \p{D > \delta x_T}\big)(v(x_T) \p{D > 2r x_T}\big) \E{\frac{N_0(T)^{k+2}}{v(x_T) T^{k+1}}} \\
    &\le  3 C\big(v(x_T) \p{D > \delta x_T}\big)(v(x_T) \p{D > 2r x_T}\big) \frac{\sum_{j=1}^{k+2} B_{j,k} (\lambda T)^{j}}{v(x_T) T^{k+1}}
\end{align*}
where $\{B_{j,k}\}_j$ are the Stirling numbers of the second kind. Because $D$ is regularly varying, $v(x_T) \p{D > \delta x_T} = \cO(1)$ and $v(x_T) \p{D > 2r x_T}\big) = \cO(1)$ as $T  \rightarrow \infty,$ and, by the choice of $v(x_T)$, just as in the case $k=0$, because the last term is dominated by $$\frac{(\lambda T)^{k+2}}{v(x_T) T^{k+1}} = \smallO(1), \text{ as } T \rightarrow \infty,$$ which implies that $\T_1 = \smallO(1), \text{ as } T \rightarrow \infty,$ and this concludes the sketch of the proof in the case $k \ge 1$. 

\end{proof}

\subsection{Proof of Proposition~\ref{prop:t2}}\label{proof:t2}

In order to prove Proposition~\ref{prop:t2}, we need the following conditional adaptation of Proposition 3.1 in \cite{olvera12} applied to the remainder term considered in Equation~\eqref{eq:remaindert}, which satisfies Assumptions~\ref{assumption3} to~\ref{assumption6} in Section~\ref{section:res}. 


\begin{lem}\label{lem:sbj}
Let $T > 0$ and $D = \sum_{j=0}^{K} Z_j$ be a generic cluster, satisfying Assumptions~\ref{assumption1}, and with $x_T$ satisfying Assumption~\ref{assumption2}. Let $D^{>T} = \sum_{j=1}^{\mfM} Z_j \I{Q_j > T - \Gamma}$ be such that $(\mfM, \{Q_j\}_{j \ge 1})$ is generically dependent, with $\{Q_j\}_{j \ge 1}$ a conditionally independent sequence given some $\sigma-$algebra $\cF_{CI}$, with $\mfM$ such that $\p{\mfM > x} \sim c_1 \p{D > x}, $ as $x \rightarrow \infty$ for $c_1 \in [0, \infty)$, and $\{Z_{j}\}_{j \ge 1}$ be a nonnegative i.i.d. sequence such that $\p{Z_j > x} \sim c_2 \p{D > x}, $ as $x \rightarrow \infty$ for $c_2 \in [0, \infty)$, independent from $(\{Q_j\}_{j \ge 1}, \mfM, \Gamma)$. Then, it holds that
    $$\p{\sum_{j=1}^{\mfM} Z_j \I{Q_j > T- \Gamma} > x_T \mid D > x_T} = \smallO(1), \text{ as } T \rightarrow \infty.$$
\end{lem}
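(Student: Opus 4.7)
The plan is to reduce the stated conditional probability to an unconditional tail bound, and then establish the latter via the classical ``single big jump'' decomposition, adapted to the thinned, conditionally independent structure described in Assumption~\ref{assumption5}. First, by the trivial inequality $\p{A \mid B} \le \p{A}/\p{B}$,
$$\p{\sum_{j=1}^{\mfM} Z_j \I{Q_j > T-\Gamma} > x_T \;\Big|\; D > x_T} \le \frac{\p{D^{>T} > x_T}}{\p{D > x_T}},$$
so it suffices to show $\p{D^{>T} > x_T} = \smallO(\p{D > x_T})$ as $T \to \infty$. The key quantitative ingredient is that the effective count of ``late'' offsprings $N^{>T} \defeq \sum_{j=1}^{\mfM} \I{Q_j > T - \Gamma}$ satisfies $\E{N^{>T}} \to 0$. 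This follows from Assumption~\ref{assumption5}: conditioning on $\cF_{CI}$, the conditional independence between $\mfM$ and $\{Q_j\}_{j \ge 1}$ together with the conditional i.i.d. structure of $\{Q_j\}_{j \ge 1}$ yield
$$\E{N^{>T} \mid \cF_{CI}} = \E{\mfM \mid \cF_{CI}}\; \p{Q_1 > T - \Gamma \mid \cF_{CI}},$$
and since $\E{\mfM} < \infty$ (because $\alpha > 1$), while the conditional probability above tends to zero almost surely, dominated convergence yields the claim.

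Next, for $\epsilon \in (0, 1/2)$, I would apply the one-big-jump decomposition
$$\set{D^{>T} > x_T} \subseteq \bigcup_{j \ge 1}\set{Z_j \I{Q_j > T-\Gamma,\; j \le \mfM} > \epsilon x_T}  \cup \set{\sum_{j=1}^{\mfM} Z_j \I{Q_j > T-\Gamma,\; Z_j \le \epsilon x_T} > (1-\epsilon) x_T}.$$
For the first piece, a union bound, the independence of $\{Z_j\}$ from $(\mfM, \{Q_j\})$, and the regular variation of $Z$ give
$$\p{\bigcup_{j \ge 1} \set{Z_j \I{Q_j > T-\Gamma,\; j \le \mfM} > \epsilon x_T}} \le \E{N^{>T}} \p{Z > \epsilon x_T} = \cO\big(\E{N^{>T}} \p{D > x_T}\big) = \smallO\big(\p{D > x_T}\big).$$
Heuristically, this says that a typical large jump of $D$ occurs at an index $j$ with $Q_j$ small, so the truncated sum sees that jump only with asymptotically vanishing probability $\p{Q_1 > T - \Gamma}$.

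The main obstacle will be the second, ``many small jumps'' piece, because $\mfM$ is itself regularly varying with the same tail index $\alpha$ as $D$, and for $\alpha \in (1,2)$ a naive second-moment Chebyshev bound applied \emph{unconditionally} fails (since $\E{\mfM^2} = \infty$). The plan is to work first conditionally on $(\cF_{CI}, \mfM)$, where the truncated summands $Z_j \I{Q_j > T-\Gamma,\; Z_j \le \epsilon x_T}$ become bounded and conditionally independent; a Chebyshev-type bound combined with Karamata's theorem on truncated moments, $\E{Z^2 \I{Z \le \epsilon x_T}} = \cO(x_T^2 \p{D > x_T})$, produces a conditional variance of order $\mfM\, \p{Q_1 > T - \Gamma \mid \cF_{CI}}\cdot x_T^2 \p{D > x_T}$, and integrating against the joint law of $(\cF_{CI}, \mfM)$ reproduces the factor $\E{N^{>T}}$, which is $\smallO(1)$. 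For $\alpha \ge 2$, the same scheme works with Markov of a higher order $p > \alpha$ in place of Chebyshev, using a Fuk--Nagaev-type bound conditionally on $(\cF_{CI}, \mfM)$. This reshuffling of the expectation through the $\sigma$-algebra $\cF_{CI}$ is precisely the conditional adaptation of the argument in Proposition~3.1 of \cite{olvera12} to the two-level (cluster size and thinning) structure at hand.
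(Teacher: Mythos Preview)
Your big-jump piece is fine and matches the paper's treatment of $\T_{12}$. The gap is in the ``many small jumps'' piece. Your Chebyshev/Fuk--Nagaev bound, applied conditionally on $(\cF_{CI},\mfM)$, controls the \emph{centered} truncated sum; to turn this into a bound on $\p{\sum Y_j > (1-\epsilon)x_T \mid \cF_{CI},\mfM}$ you need the conditional mean $\mfM\,p_T\,\E{Z\I{Z\le\epsilon x_T}}$ to be bounded away from $(1-\epsilon)x_T$. On the event $\{\mfM p_T > c\,x_T\}$ this fails, and that event is not negligible at the required scale: since $\mfM$ has tail comparable to $D$ and you only know $p_T\to 0$ a.s.\ with no rate, the best unconditional bound you can extract (e.g.\ Markov or H\"older on $\mfM p_T$) gives $\p{\mfM p_T>c\,x_T}=\cO(\p{D>x_T})$, not $\smallO(\p{D>x_T})$. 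So the step ``integrating against the joint law of $(\cF_{CI},\mfM)$ reproduces the factor $\E{N^{>T}}$'' silently assumes the mean term is always small, which is exactly the regime you have not covered.

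The paper isolates this regime as a separate term $\T_2=\p{(\E{Z}+\delta)N^{>T}>x_T\mid D>x_T}$ and, crucially, does \emph{not} discard the conditioning on $\{D>x_T\}$: by joint regular variation of $(\mfM,D)$ it invokes a Breiman-type conditional moment bound $\E{(\mfM/x_T)^p\mid D>x_T}=\cO(1)$ for $p<\alpha$, then combines it with H\"older and $\p{Q>T-\Gamma}\to 0$ to get $\T_2=\smallO(1)$. By passing to the unconditional ratio $\p{D^{>T}>x_T}/\p{D>x_T}$ at the very first step, you lose access to this conditional moment control and have no substitute for it. Either keep the conditioning throughout (as the paper does), or add a separate argument showing $\p{N^{>T}>c\,x_T}=\smallO(\p{D>x_T})$ under the stated assumptions---which cannot be done with $\E{N^{>T}}\to 0$ alone.
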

\begin{rem}
    The setup of Lemma~\ref{lem:sbj} ensures the quantities in $D^{>T}$ further satisfy Assumptions~\ref{assumption3} to~\ref{assumption6}. 
\end{rem}
\begin{proof}
We give a sketch of the proof, since it essentially follows the lines of the lemmas used to prove Proposition 3.1 in \cite{olvera12}. Let $0 < 1/\sqrt{\log(x_T)} \le \delta < 1/2$ and note 
\begin{align*}
    \p{\sum_{j=1}^\mfM Z_j \I{Q_j > T - \Gamma} > x_T \mid D > x_T} &\le \p{\sum_{j=1}^\mfM Z_j \I{Q_j> T - \Gamma} > x_T, \big(\E{Z}+\delta \big) \sum_{j=1}^\mfM \I{Q_j> T - \Gamma} \le x_T \mid D > x_T} \\
    &\quad + \p{\big(\E{Z}+\delta \big) \sum_{j=1}^\mfM \I{Q_j> T - \Gamma} > x_T \mid D > x_T} \\
    &\eqdef \T_1 + \T_2.
\end{align*}
We first treat term $\T_1$. Let $J_\mfM(y) \defeq \text{card}\{1 \le j \le \mfM: Z_j \I{Q_j> T - \Gamma} > y\}$. \\ We further decompose the term of interest as:
\begin{align*}
    \T_1 &\le \p{\sum_{j=1}^\mfM Z_j \I{Q_j> T - \Gamma} > x_T, \big(\E{Z}+\delta \big) \sum_{j=1}^\mfM \I{Q_j> T - \Gamma} \le x_T, \, J_\mfM((1-\delta)x_T) = 0  \mid D > x_T} \\
    &\qquad + \p{J_{\mfM}((1-\delta)x_T) \ge 1 \mid D > x_T} \\
    &\eqdef \T_{11} + \T_{12}. 
\end{align*}
Start with $\T_{12}$. Consider the $\sigma-$algebra $\cF_0 \defeq \sigma(\mfM, \Gamma, Q_1, Q_2, \ldots)$, and using this filtration, note that, by a union bound, 
\begin{align*}
    \T_{12} &\le \frac{\p{J_{\mfM}((1-\delta)x_T) \ge 1}}{\p{D > x_T}} \\
    &\le \frac{\E{\E{\I{\bigcup_{j=1}^{\mfM} \{Z_j \I{Q_j> T - \Gamma} > (1-\delta)x_T\}} \mid \cF_0 }}}{\p{D > x_T}} \\
    &\le \frac{\E{ \sum_{j=1}^\mfM \E{\I{\{Z_j \I{Q_j> T - \Gamma} > (1-\delta)x_T\}} \mid \cF_0 }}}{\p{D > x_T}} \\
    &\le \frac{\E{ \sum_{j=1}^\mfM \I{Q_j> T - \Gamma} \p{Z_j > (1-\delta)x_T}}}{\p{D > x_T}} \\
    &\le \frac{\p{Z > (1-\delta)x_T}}{\p{D > x_T}} \E{\sum_{j=1}^\mfM \I{Q_j> T - \Gamma}}
\end{align*}
Combining Assumptions~\ref{assumption1} and~\ref{assumption4} yields $$\frac{\p{Z > (1-\delta)x_T}}{\p{D > x_T}} = \cO(1), \text{ as } T \rightarrow \infty,$$ while the expectation in the above product converges, because $\mfM$ has finite expectation, and we appeal to Assumption~\ref{assumption5}, which guarantees $\p{Q_j > T - \Gamma\mid \cF_{CI}} = \smallO(1) \text{ a.s.},$ as $T \rightarrow \infty.$ This shows that $$\T_{12} = \smallO(1), \text{ as } T \rightarrow \infty.$$

For $\T_{11}$, we need finer granularity.  Decompose it as 
\begin{align*}
    \T_{11} &\le \p{\sum_{j=1}^\mfM Z_j \I{Q_j> T - \Gamma} > x_T, \sum_{j=1}^\mfM \I{Q_j> T - \Gamma} \le \frac{x_T}{\log(x_T)}, \, J_\mfM((1-\delta)x_T) = 0  \mid D > x_T} \\
    &\quad + \p{\sum_{j=1}^\mfM Z_j \I{Q_j> T - \Gamma} > x_T, \frac{x_T}{\log(x_T)} \le \sum_{j=1}^\mfM \I{Q_j> T - \Gamma} \le \frac{x_T}{\E{Z}+\delta}, \, J_\mfM((1-\delta)x_T) = 0  \mid D > x_T} \\
    &\eqdef \T_{111} + \T_{112}. 
\end{align*}
For the first term, we need even more granularity, by the introduction of a lower threshold for the number of points after the temporal boundary $T$: 
\begin{align*}
  \T_{111} &\le  \p{\sum_{j=1}^\mfM Z_j \I{Q_j> T - \Gamma} > x_T,  \sum_{j=1}^\mfM \I{Q_j> T - \Gamma} \le \frac{x_T}{\log(x_T)}, \, J_\mfM((1-\delta)x_T) = 0, \, J_\mfM\Big(\frac{x_T}{\log(x_T)}\Big) = 0  \mid D > x_T} \\
  &\quad + \p{\sum_{j=1}^\mfM Z_j \I{Q_j> T - \Gamma} > x_T, \sum_{j=1}^\mfM \I{Q_j> T - \Gamma} \le \frac{x_T}{\log(x_T)}, \, J_\mfM((1-\delta)x_T) = 0, \, J_\mfM\Big(\frac{x_T}{\log(x_T)}\Big) = 1  \mid D > x_T} \\
  &\quad + \p{\sum_{j=1}^\mfM Z_j \I{Q_j> T - \Gamma} > x_T, \sum_{j=1}^\mfM \I{Q_j> T - \Gamma} \le \frac{x_T}{\log(x_T)}, \, J_\mfM((1-\delta)x_T) = 0, \, J_\mfM\Big(\frac{x_T}{\log(x_T)}\Big) \ge 2  \mid D > x_T} \\
  &\eqdef \T_{1111} + \T_{1112} + \T_{1113}.
\end{align*}
For the first two terms above, let $$\overline{Z_j \mathbbm{1}} = Z_j \I{Q_j> T - \Gamma} \I{Z_j \I{Q_j> T - \Gamma} \le (\frac{x_T}{\log x_T})} \text{ and } \underline{Z_j \mathbbm{1}} = Z_j \I{Q_j> T - \Gamma} \I{Z_j \I{Q_j> T - \Gamma} > (\frac{x_T}{\log x_T})},$$ and note that, for the first one,
\begin{align*}
    \T_{1111} &= \p{\sum_{j=1}^\mfM \overline{Z_j \mathbbm{1}} > x_T, \, \sum_{j=1}^\mfM \I{Q_j> T - \Gamma} \le \frac{x_T}{\log(x_T)}, \, J_\mfM\Big(\frac{x_T}{\log(x_T)}\Big) = 0 \mid D > x_T} \\
    &\le \frac{\p{\sum_{j=1}^\mfM \overline{Z_j \mathbbm{1}} > x_T, \, \sum_{j=1}^\mfM \I{Q_j> T - \Gamma} \le \frac{x_T}{\log(x_T)}}}{\p{D > x_T}} 
\end{align*}
and, for the second term, 
\begin{align*}
  \T_{1112} &= \p{\sum_{j=1}^\mfM (\overline{Z_j\mathbbm{1}} + \underline{Z_j \mathbbm{1}}) > x_T, \sum_{j=1}^\mfM \I{Q_j> T - \Gamma} \le \frac{x_T}{\log(x_T)}, \, J_\mfM((1-\delta)x_T) = 0, \, J_\mfM\Big(\frac{x_T}{\log(x_T)}\Big) = 1  \mid D > x_T} \\
  &\le \p{\sum_{j=1}^\mfM \overline{Z_j\mathbbm{1}}  > \delta x_T, \sum_{j=1}^\mfM \I{Q_j> T - \Gamma} \le \frac{x_T}{\log(x_T)}\mid D > x_T} \\
  &\le \frac{\p{\sum_{j=1}^\mfM \overline{Z_j\mathbbm{1}}  > \delta x_T, \, \sum_{j=1}^\mfM \I{Q_j> T - \Gamma} \le \frac{x_T}{\log(x_T)}}}{\p{D > x_T}} \eqdef \frac{\T_{1112}'}{\p{D > x_T}}.
\end{align*}
We start with the observation that 
$\T_{1111} \le \T_{1112}$.
We apply Lemma~3.4 in \cite{olvera12}: in this result, an event of the form \(\{I_K(w) = 0\}\), which tracks exceedances of level some level $w$ by the weights $C_i$;  is considered; however, in our setting, the weights considered are indicators
$$C_j \equiv \I{Q_j> T - \Gamma},$$
which implies that the event \(I_\mfM(w)=0\) is trivially bounded above by 1. Therefore, we can ignore the event \(\{I_\mfM(w) = 0\}\) and no further bounding as in Lemma~3.2 of \cite{olvera12} is needed. In essence, we are allowed to take \(u \equiv 1\) in the Lemmas cited in the final expressions obtained, providing some minor constants are adjusted in the quantities throughout.

Lemma 3.2 in \cite{olvera12} directly yields that, for any \(\eta > 0\), we have, for the numerator in $\T_{1112}$,
$$\T_{1112}' = \smallO\bigl(x_T^{-\eta}\bigr),\, \text{ as } T \rightarrow \infty.$$
Since this conclusion holds in particular for $\eta = \alpha$, it follows that
$$\T_{1111} = \T_{1112} = \smallO(1), \, \text{ as } T \rightarrow \infty.$$

Consider now term $\T_{1113}.$ To show negligibility, we proceed as in the proof of Lemma 3.3 in \cite{olvera12}, neglecting once again the event $\{I_\mfM(w)=0\}$ therein, which yields, using the above-defined filtration $\cF_0$ and the independence of $\mfM$ and $\{W_j\}$ from $\{Z_j\}$,
\begin{align*}
    \T_{1113} &\le \frac{\E{\I{\sum_{j=1}^\mfM \I{Q_j> T - \Gamma} \le \frac{x_T}{\log(x_T)}} \E{\I{\bigcup_{1 \le i < j \le \mfM} \{Z_i \I{Q_i > T - \Gamma} > \frac{x_T}{\log(x_T)}, \,Z_j \I{Q_j> T - \Gamma} > \frac{x_T}{\log(x_T)} \}} \mid \cF_0 }}}{\p{D > x_T}} \\
    &\le \frac{\E{\I{\sum_{j=1}^\mfM \I{Q_j> T - \Gamma} \le \frac{x_T}{\log(x_T)}} \sum_{1 \le i < j \le \mfM}\E{\I{ \{Z_i \I{Q_i > T - \Gamma} > \frac{x_T}{\log(x_T)}, \,Z_j \I{Q_j> T - \Gamma} > \frac{x_T}{\log(x_T)} \}} \mid \cF_0 }}}{\p{D > x_T}} \\
    &\le \frac{\E{\I{\sum_{j=1}^\mfM \I{Q_j> T - \Gamma} \le \frac{x_T}{\log(x_T)}} \Big(\sum_{j = 1}^\mfM \I{Q_j> T - \Gamma} \p{Z_j > \frac{x_T}{\log(x_T)}}\Big)^2}}{\p{D > x_T}}. 
\end{align*}
Now, split the product inside the expectation above, and consider one element of it; by Markov inequality, using the moment assumption on $Z$ which, by Assumption~\ref{assumption4}, has at least moment of order $\alpha > 1+\epsilon > 1$ for some $\epsilon >0,$ 
\begin{align*}
    &\I{\sum_{j=1}^\mfM \I{Q_j> T - \Gamma} \le \frac{x_T}{\log(x_T)}}\sum_{j = 1}^\mfM \I{Q_j> T - \Gamma} \p{Z_j > \frac{x_T}{\log(x_T)}} \\
    &\qquad \qquad \le \Big(\frac{x_T}{\log(x_T)} \Big)^{-1-\epsilon} \E{\lvert Z \rvert^{1+\epsilon}} \I{\sum_{j=1}^\mfM \I{Q_j> T - \Gamma} \le \frac{x_T}{\log(x_T)}}\sum_{j=1}^\mfM \I{Q_j> T - \Gamma} \\
    &\qquad\qquad \le \Big(\frac{x_T}{\log(x_T)} \Big)^{-\epsilon} \E{\lvert Z \rvert^{1+\epsilon}}.
\end{align*}
Overall, this yields, using Assumptions~\ref{assumption1} and~\ref{assumption4}, recalling that $\p{D > x} \sim x^{-\alpha} \ell_D(x),$ as $x \rightarrow \infty$, 
\begin{align*}
    \T_{1113} &\le \frac{\Big(\frac{x_T}{\log(x_T)} \Big)^{-\epsilon} \E{\lvert Z \rvert^{1+\epsilon}} \p{Z > \frac{x_T}{\log(x_T)}} \E{\sum_{j=1}^\mfM \I{Q_j> T - \Gamma}}}{\p{D > x_T}} \\
    &\le \frac{\Big(\frac{x_T}{\log(x_T)} \Big)^{-\epsilon} \E{\lvert Z \rvert^{1+\epsilon}} \Big(\frac{x_T}{\log(x_T)} \Big)^{-\alpha} c\ell_{D}\Big(\frac{x_T}{\log(x_T)}\Big) \E{\sum_{j=1}^\mfM \I{Q_j> T - \Gamma}}}{x_T^{-\alpha} \ell_{D}(x_T)} \\
    &\le \frac{\log(x_T)^{\alpha-\epsilon}}{x_T^{\epsilon}} \E{\lvert Z \rvert^{1+\epsilon}}\E{\sum_{j=1}^\mfM \I{Q_j> T - \Gamma}} c\ell_{D}\Big(\frac{x_T}{\log(x_T)}\Big) / \ell_{D}(x_T) \\
    &= \smallO(1),\, \text{ as } T \rightarrow \infty.
\end{align*}
Combining the above shows that $$\T_{111} = \smallO(1), \text{  as } T \rightarrow \infty. $$

We turn to term $\T_{112}.$ We use Lemma 3.5 in \cite{olvera12} to note that 
\begin{align*}
    \T_{112} &\le \frac{\E{\I{\frac{x_T}{\log(x_T)} < \sum_{j=1}^\mfM \I{Q_j> T - \Gamma} \le x/(\E{Z}+\delta)} \exp\Big(-\theta x_T + \big(\E{Z} + \frac{C}{\log((1-\delta)x_T)} \big) \theta \sum_{j=1}^\mfM \I{Q_j> T - \Gamma}  \Big)^+} }{\p{D > x_T}}
\end{align*}
where $C > 0$ is a constant that only depend on the expectation of $Z$, $\theta = \frac{\epsilon}{(1-\delta)x_T} \log((1-\delta)x_T)$ for some $\epsilon > 0$, and $(\cdot)^+$ is the positive part of $(\cdot)$. Using similar techniques as in \cite{olvera12}, it can then be shown that 
\begin{align*}
    \T_{112} &\le \frac{x_T^{-\epsilon/2} \p{\sum_{j=1}^\mfM \I{Q_j> T - \Gamma} > \frac{x_T}{\log(x_T)}} + \exp(- \frac{\epsilon \sqrt{\log(x_T)}}{\E{Z}}) \p{\sum_{j=1}^\mfM \I{Q_j> T - \Gamma} > x_T/(2 \E{Z})}}{\p{D > x_T}} \\
    &= \smallO(1), \text{ as } T \rightarrow \infty.
\end{align*}
Combining with the above shows that 
$$\T_{11} = \smallO(1),\, \text{ as } T \rightarrow \infty.$$

Finally, term $\T_2$ can be treated by applying, sequentially, Markov inequality of order 1, using the conditional independence of $\{Q_j\}$ and $\mfM$ given some $\sigma-$algebra $\cF_{CI}$ specified in Assumption~\ref{assumption5}, Hölder inequality of order $p < \alpha$, with $1/p + 1/q = 1$, and the fact that $\{Q_j\}$ is conditionally independent from $D$ given $\cF_{CI}$, which includes all the required information. Letting $c= (\E{Z}+\delta),$ we note  
\begin{align*}
    \T_2 &\le \frac{\E{\sum_{j=1}^\mfM \I{Q_j> T - \Gamma} \mid D > x_T}}{cx_T} \\
    &\le \frac{\E{ (\mfM/x_T) \p{Q  > T - \Gamma \mid \cF_{CI}} \mid D > x_T}}{c} \\
    &\le \frac{\E{ (\mfM/x_T)^p \mid D > x_T}^{1/p}  \E{\p{Q > T - \Gamma \mid \cF_{CI}}^q\mid D > x_T}^{1/q} }{c} \\
    &\le \frac{\E{ (\mfM/x_T)^p \mid D > x_T}^{1/p}  \p{Q  > T - \Gamma}^{1/q} }{c}.
\end{align*}
Now, note that $(\mfM, D)$ is jointly regularly varying with index $\alpha > 1$ by Assumptions~\ref{assumption1} and~\ref{assumption3}: this can solely follow from the assumption that $D$ is regularly varying with index $\alpha > 1$, and, for joint regular variation, if one coordinate is regularly varying, then the vector is (by applying the characterisation through the regular variation of linear combinations of the elements of the vector, found in Theorem 1.1 of \cite{bdm02}). It then follows, by an application of Corollary 2.1.10 in \cite{ks20}, noting that $p < \alpha$, that
$$\E{(\mfM/x_T)^p \mid D > x_T} = \cO(1), \text{ as } T \rightarrow \infty.$$
Finally, by Assumption~\ref{assumption5}, since it also holds unconditionally that $\p{Q>T - \Gamma} = \smallO(1)$, as $T \rightarrow \infty$, we conclude $$\T_2 = \smallO(1), \text{ as } T \rightarrow \infty.$$
The proof of the lemma is complete.
\end{proof}

We are now able to prove Proposition~\ref{prop:t2}. 

\begin{proof}[Proof of Proposition~\ref{prop:t2}]
    We start by the case $k = 0$, which is a building block for the general case. 
    
\textbf{Case of $k=0$:}

The expression to control is 
$$
\T_2 = v'(x_T)\p{d_{M_1}\Big(x_T^{-1} D_{(N)}  \I{\Gamma_{(N)}/T \le t},\,  x_T^{-1} \sum_{j=0}^{\mfK_{(N)}} Z_{(N)j}  \I{(\Gamma_{(N)} + W_{(N)j})/T \le t}\Big) > \varepsilon,\,  d_{M_1}(x_T^{-1}\tilde{R}_T, \D_0) > r}.
$$

Just as in the proof of Proposition~\ref{prop:t1}, using Lemma 6.1 in \cite{dtw22}, it is possible to bound the second event in the above probability, and the claim will follow if we can show the overall negligibility of the upper bound
$$
\T_2 \le  v'(x_T)\p{d_{M_1}\Big(x_T^{-1} D_{(N)}  \I{\Gamma_{(N)}/T \le t},\,  x_T^{-1} \sum_{j=0}^{\mfK_{(N)}} Z_{(N)j}  \I{(\Gamma_{(N)} + W_{(N)j})/T \le t}\Big) > \varepsilon,\, D_{(N)} > 2rx_T}.
$$

To control the $M_1$ distance in the first part of the probability above, we need to construct the parametric representations of the stochastic processes considered, as defined in Section~\ref{m1}. We let the parametric representation of 
$$
x_1(t) \defeq x_T^{-1} D_{(N)} \I{\Gamma_{(N)} \le tT}
$$ 
be denoted by $(r, u)$ and the one of 
$$
x_2(t) \defeq x_T^{-1} \sum_{j=0}^{\mfK_{(N)}} Z_{(N)j}  \I{(\Gamma_{(N)} + W_{(N)j})/T \le t}
$$ 
by $(r_T, u_T).$ 

To construct the parametric representations, we suppose that the cluster $D_{(N)}$ is unfinished by time $T$; the case where it is finished by this time is obtained easily from it, since by construction, the spatial difference will be $0$, as is clear from below. Recall $K_{(N)}$ denotes the total size of the largest cluster $D_{(N)}$; hence, it means that there exists some $j \in \{1, 2, \ldots, K_{(N)}\}$ such that 
$$
(\Gamma_{(N)} + Q_{(N)j})/T > 1,
$$ 
recalling that, in Assumption~\ref{assumption5} and Assumption~\ref{assumption6}, for any event $j$ of the cluster of interest, $Q_{(N)j}$ is a notation localising the event by the generational depth of event $j$.  

We define the following rescaled times:
\begin{enumerate}
    \item $s_{\Gamma} := \Gamma_{(N)}/T$, the (rescaled) start time of the cluster;
    
    \item $s_{\max} :=  \max_{1 \le j \le K_{(N)}} \left\{(\Gamma_{(N)} + Q_{(N)j} \, \I{Q_{(N)j} \le T})/T \right\}$, 
    the latest (rescaled) time of an event in the cluster occurring before $T$;
    
    \item $s_{\epsilon} := s_{\max} + \epsilon$, for some fixed $\epsilon > 0$.
\end{enumerate}
Hence, by construction, we have
$$
0 < s_{\Gamma} < s_{\max} < s_{\epsilon} < 1.
$$

We construct $(u, u_T)$ and $(r, r_T)$ as follows:
\begin{enumerate}
    \item[] \textbf{Spatial parts $u, u_T$, represented in Figure~\ref{fig:both}\subref{fig1}:}
    \begin{enumerate}
        \item For $s \in [0, s_{\max}]$, we let
        $$
        u(0) = u_T(0) = 0, \quad u(s_{\max}) = u_T(s_{\max}) = \sum_{j=0}^{\mfK_{(N)}} Z_{(N)j} \I{\Gamma_{(N)} + W_{(N)j} \le T},
        $$
        and interpolate linearly in between, for $s \in (0, s_{\max})$. At $s_{\max}$ begins the spatial discrepancy, as $u$ has yet to reach the total cluster height $x_T^{-1} D_{(N)}$, whereas $u_T$ is blind to everything occurring after time $T$:
        \begin{enumerate}
            \item We let
            $$
            u_T(s) = \sum_{j=0}^{\mfK_{(N)}} Z_{(N)j} \I{\Gamma_{(N)} + W_{(N)j} \le T}, \quad \text{for } s \in [s_{\max}, 1],
            $$
            stay constant;
            \item We let
            $$
            u(s_{\epsilon}) = \sum_{j=0}^{\mfK_{(N)}} Z_{(N)j} \I{\Gamma_{(N)} + W_{(N)j} \le T} + \sum_{j=1}^{\mfM_{(N)}} Z_{(N)j} \I{Q_{(N)j} > T - \Gamma_{(N)}},
            $$
            where the second term is the generic remainder term of Equation~\eqref{eq:remaindert} considered in Assumption~\ref{assumption5}. We interpolate linearly in between, for $s \in [s_{\max}, s_{\epsilon}]$; then,
            $$
            u(s) = u(s_{\epsilon}) = \sum_{j=0}^{\mfK_{(N)}} Z_{(N)j} \I{\Gamma_{(N)} + W_{(N)j} \le T} + \sum_{j=1}^{\mfM_{(N)}} Z_{(N)j} \I{Q_{(N)j} > T - \Gamma_{(N)}}, \quad \text{for } s \in (s_{\epsilon}, 1],
            $$
            and keep $u$ constant on that interval.
        \end{enumerate}
    \end{enumerate}

    \item[] \textbf{Temporal parts $r, r_T$, represented in Figure~\ref{fig:both}\subref{fig2}:}
    \begin{enumerate}
        \item For $s \in [0, s_{\Gamma}]$, we let
        $$
        r(0) = r_T(0) = 0, \quad r(s_{\Gamma}) = r_T(s_{\Gamma}) = \Gamma/T,
        $$
        and interpolate linearly between these points. At $s_{\Gamma}$ begins the temporal discrepancy: $r$ stays constant to represent the instantaneous jump of $x_1$, while $r_T$ increases at event times within $[0, T]$ (or rescaled $[0, 1]$):
        \begin{enumerate}
            \item We let
            $$
            r(s) = \Gamma/T, \quad \text{for } s \in [s_{\Gamma}, s_{\epsilon}];
            $$
            \item For $s \in [s_{\Gamma}, s_{\max}]$, we let $r_T$ interpolate between the jump times
            $$
            (\Gamma_{(N)} + Q_{(N)1})/T,\; (\Gamma_{(N)} + Q_{(N)2})/T,\; \ldots,\; (\Gamma_{(N)} + Q_{(N)j})/T,
            $$
            where the maximum is taken over all $j$ such that $\Gamma_{(N)} + Q_{(N)j} \le T$. Then, we let
            $$
            r_T(s) = \max_j (\Gamma_{(N)} + Q_{(N)j})/T, \quad \text{for } s \in [s_{\max}, s_{\epsilon}];
            $$
            \item Finally, we let
            $$
            r(s_{\epsilon} + \delta) = r_T(s_{\epsilon} + \delta) = \max_j (\Gamma_{(N)} + Q_{(N)j})/T
            $$
            for some $\delta > 0$, and we let both grow together linearly for $s \in [s_{\epsilon} + \delta, 1]$.
        \end{enumerate}
    \end{enumerate}
\end{enumerate}

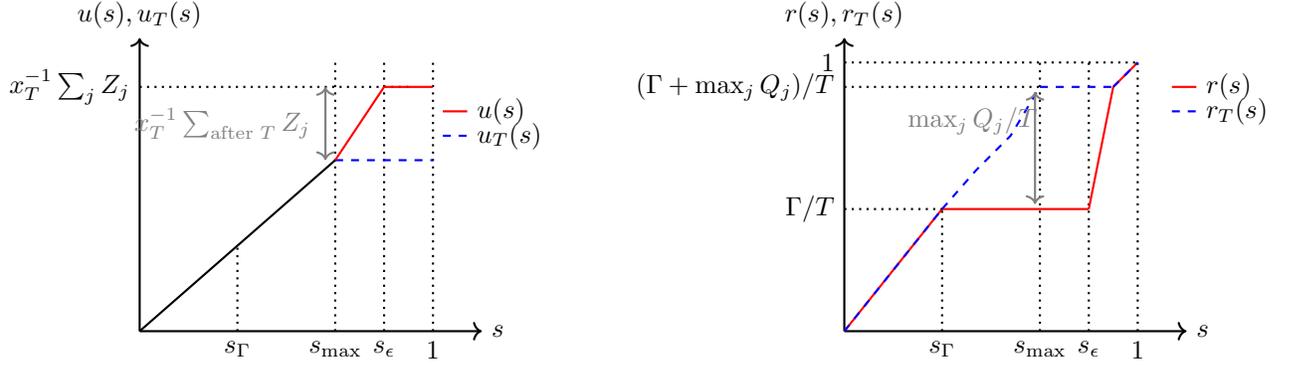
\begin{figure}[ht!]
\centering
\begin{subfigure}{0.48\textwidth}
    \centering
    \begin{tikzpicture}[scale=0.65, thick]  
    \draw[->] (0,0) -- (7,0) node[right] {$s$};
    \draw[->] (0,0) -- (0,6) node[above] {$u(s), u_T(s)$};
    \draw[dotted] (6,0) -- (6,5.5);
    \node[below] at (6,0) {$1$};

    \node[left] at (0,5) {$x_T^{-1}\sum_{j} Z_j$};
    \draw[dotted] (0,5) -- (5.5,5);

    \draw[dotted] (4,0) -- (4,5.5);
    \node[below] at (4,0) {$s_{\max}$};

    \draw[dotted] (5,0) -- (5,5.5);
    \node[below] at (5,0) {$s_{\epsilon}$};

    \draw[dotted] (2,0) -- (2,1.8);
    \node[below] at (2,0) {$s_{\Gamma}$};
    
    \draw[thick] (0,0) -- (4,3.5);
    \draw[blue,thick, dashed] (4,3.5) -- (6, 3.5);
    \draw[red,thick] (4,3.5) -- (5,5)  -- (6,5);
    \draw[gray,<->] (3.8,3.5) -- (3.8,5);
    \node[left,gray] at (3.7,4.25) {$x_T^{-1}\sum_{\text{after } T} Z_j$};
    \draw[red,thick] (6.2,4.5) -- (6.7,4.5);
    \node[right] at (6.7,4.5) {$u(s)$};
    \draw[blue,thick,dashed] (6.2,4) -- (6.7,4);
    \node[right] at (6.7,4) {$u_T(s)$};
    \end{tikzpicture}
    \caption{Spatial parametric representation mismatch in $M_1$ topology.}
    \label{fig1}
\end{subfigure}
\hfill
\begin{subfigure}{0.48\textwidth}
    \centering
    \begin{tikzpicture}[scale=0.65, thick]  
    \draw[->] (0,0) -- (7,0) node[right] {$s$};
    \draw[->] (0,0) -- (0,6) node[above] {$r(s), r_T(s)$};
    \draw[dotted] (6,0) -- (6,5.5);
    \node[below] at (6,0) {$1$};
    \draw[dotted] (2,0) -- (2,2.5);
    \node[below] at (2,0) {$s_{\Gamma}$};

    \draw[dotted] (4,0) -- (4,5.5);
    \node[below] at (4,0) {$s_{\max}$};

    \draw[dotted] (5,0) -- (5,5.5);
    \node[below] at (5,0) {$s_{\epsilon}$};
    
    \draw[red,thick] (0,0) -- (2,2.5);
    \draw[blue,thick,dashed] (0,0) -- (2,2.5);
    \draw[red,thick] (2,2.5) -- (5,2.5);
    \draw[blue,thick,dashed] (2,2.5) -- (2.7,3.3) -- (3.4,4) -- (4,5);
    \draw[gray,<->] (3.9,2.6) -- (3.9,4.9);
    \node[right,gray] at (1.1,4.3) {$\max_j Q_j/T$};
    \draw[red,thick] (5,2.5) -- (5.5,5);
    \draw[blue,thick,dashed] (4,5) -- (5.5,5);
    \draw[red,thick] (5.5,5) -- (6,5.5);
    \draw[blue,thick,dashed] (5.5,5) -- (6,5.5);
    \node[left] at (0,2.5) {$\Gamma/T$};
    \draw[dotted] (0,2.5) -- (2,2.5);
    \node[left] at (0,5.5) {$1$};
    \draw[dotted] (0,5.5) -- (6,5.5);
    \node[left] at (0,5) {$(\Gamma + \max_j  Q_j) / T$};
    \draw[dotted] (0,5) -- (4,5);
    \draw[red,thick] (6.7,5) -- (7.2,5);
    \node[right] at (7.2,5) {$r(s)$};
    \draw[blue,thick,dashed] (6.7,4.5) -- (7.2,4.5);
    \node[right] at (7.2,4.5) {$r_T(s)$};
    \end{tikzpicture}
    \caption{Temporal parametric representation mismatch in $M_1$ topology.}
    \label{fig2}
\end{subfigure}
\caption{Parametric representation mismatches in $M_1$ topology.}
\label{fig:both}
\end{figure}

By the above construction, the sup-norm of the spatial discrepancy $\norm{u-u_T}_{\infty}$ is exactly the upper bound on the remainder term of Assumption~\ref{assumption5}, $\sum_{j=1}^{\mfM_{(N)}} Z_{(N)j} \I{Q_{(N)j} > T - \Gamma_{(N)}}$, while the sup-norm of the temporal discrepancy $\norm{r-r_T}_{\infty}$ is further bounded from above by the rescaled maximal displacement time in the entire cluster $\max_{1 \le j \le K_{(N)}} Q_{(N)j}/T$; hence, it means that, overall, we have to control the probabilities 
\begin{multline*}
	\T_2 \le v'(x_T) \p{x_T^{-1} \sum_{j=1}^{\mfM_{(N)}} Z_{(N)j} \I{Q_{(N)j} > T-\Gamma_{(N)}} > \epsilon/2, D_{(N)} > 2rx_T } \\ + v'(x_T) \p{\max_{1 \le j \le K_{(N)}} \big(Q_{(N)j}/T\big) > \epsilon/2, D_{(N)} > 2rx_T } \\ \eqdef \T_{21} + \T_{22}. 
\end{multline*}

We treat term $\T_{21}$ first.

\begin{rem}
Throughout this part of the proof, we need a single random index
$$ (N)=\arg\max_{1\le i\le N_0(T)} D_i .$$
Because several indices may attain the same maximal value, we make the
choice
$$ (N)=\min\bigl\{\,i\in\{1,\dots,N_0(T)\} :
                 D_i=\max_{1\le j\le N_0(T)}D_j\bigr\},$$
i.e. we keep the smallest maximising index.
This rule produces a unique random variable, and does not affect any of the bounds that follow.  
For every collection of events $\{A_i\}_{i \ge 1}$, we have
$$\sum_{i=1}^{N_0(T)}
    \mathbf 1_{\{A_i\}}\mathbf 1_{\{(N)=i\}}
  \;\le\;
  \sum_{i=1}^{N_0(T)} \mathbf 1_{\{A_i\}},$$
so all inequalities remain valid, even if
$\p{(N)=i\mid N_0(T)}$ might not be exactly uniform
(ties slightly bias towards small indices). We proceed accordingly when $k \ge 1.$
\end{rem}

Note that, using exchangeability of the sequence $\{D_i\}_{1 \le i \le N_0(T)}$, we can consider only 
\begin{align*}
    \T_{21} &\le \E{\p{x_T^{-1} \sum_{j=1}^{\mfM_{(N)}} Z_{(N)j} \I{Q_{(N)j} > T- \Gamma_{(N)}} > \epsilon/2, D_{(N)} > 2rx_T \mid  N_0(T)}} \\
    &\le \E{\sum_{i=1}^{N_0(T)} \p{x_T^{-1} \sum_{j=1}^{\mfM_{i}} Z_{ij} \I{Q_{ij} > T- \Gamma_{i}} > \epsilon/2, D_{i} > 2rx_T, (N) = i \mid  N_0(T)}} \\
    &\le \E{N_0(T) \p{x_T^{-1} \sum_{j=1}^{\mfM_{1}} Z_{1j} \I{Q_{1j} > T- \Gamma_{1}} > \epsilon/2, D_{1} > 2rx_T \mid  N_0(T)}} \\
    &\le \E{N_0(T) \p{D_1 > 2rx_T \mid N_0(T)} \p{x_T^{-1} \sum_{j=1}^{\mfM_{1}} Z_{1j} \I{Q_{1j} > T- \Gamma_{1}} > \epsilon/2 \mid D_{1} > 2rx_T \mid  N_0(T)}}
\end{align*}




By virtue of Assumption~\ref{assumption3} to Assumption~\ref{assumption6}, applying Lemma~\ref{lem:sbj} to the second conditional probability above, it follows that 
$$\p{x_T^{-1} \sum_{j=1}^{\mfM_1} Z_{1j} \I{Q_{1j} > T- \Gamma_1} > \epsilon/2 \mid D_1 > 2rx_T, N_0(T) } = \smallO(1),\, \text{ as } T \rightarrow \infty.$$ Now, $$v'(x_T)\E{N_0(T)} \p{D_1 > 2rx_T} = (\E{N_0(T)}/T) v(x_T)\p{D_1 > 2rx_T} = \cO(1), \text{ as } T \rightarrow \infty, $$ by virtue of the regular variation assumption on $D_1$, and the uniform integrability of $N_0(T)/T.$  

Recollecting with the above, this yields $$\T_{21} = \smallO(1), \text{ as } T \rightarrow \infty.$$

Similarly, to treat term $\T_{22}$, consider (recalling that, in this case, we consider potentially all events $K_{(N)}$ in the cluster) 
\begin{multline*}
    v'(x_T) \p{\max_{1 \le j \le K_{(N)}} \big(Q_{(N)j}/T\big) > \epsilon/2,D_{(N)} > 2rx_T } \\ = v'(x_T) \E{\p{\max_{1 \le j \le K_{(N)}} \big(Q_{(N)j}/T\big) > \epsilon/2, D_{(N)} > 2rx_T \mid N_0(T)}}.
\end{multline*}

We bound the latter quantity, using exchangeability, by
\begin{align*}
    v'(x_T) & \E{\p{\max_{1 \le j \le K_{(N)}} \big(Q_{(N)j}/T\big) > \epsilon/2, D_{(N)} > 2rx_T \mid N_0(T)}} \\
    &\quad = v'(x_T) \E{\sum_{i=1}^{N_0(T)}\p{\max_{1 \le j \le K_{i}} \big(Q_{ij}/T\big) > \epsilon/2, D_{i} > 2rx_T, (N) = i \mid N_0(T)}} \\
    &\quad \le v'(x_T) \E{N_0(T)\p{\max_{1 \le j \le K_{1}} \big(Q_{1j}/T\big) > \epsilon/2, D_{1} > 2rx_T \mid N_0(T)}} \\
    &\quad \le v'(x_T) \E{N_0(T)\p{D_1 > 2rx_T \mid N_0(T)} \p{\max_{1 \le j \le K_{1}} \big(Q_{1j}/T\big) > \epsilon/2 \mid D_{1} > 2rx_T,  N_0(T)}}.
\end{align*}

Using a union bound for the second conditional probability, it holds that
\begin{align*}
    \p{\max_{1 \le j \le K_{1}}  \big(Q_{1j}/T\big) > \epsilon/2 \mid D_{1} > 2rx_T, N_0(T)} 
	&\le \E{\sum_{j=1}^{K_{1}} \p{Q_{1j}  > \epsilon T/2} \mid D_{1} > 2rx_T}\, \\
    &\le \E{x_T^{-1} \sum_{j=1}^{K_{1}} x_T\p{Q_{1j}  > \epsilon T/2} \mid D_{1} > 2rx_T}\,.
\end{align*}
By Assumption~\ref{assumption6}, for all $\delta > 0$, there exists $T_0 > T$ such that, for each $i \ge 1, j \ge 1$, $x_T\p{Q_{ij} > \epsilon T/2} < \delta$; hence, for $T > 0$ large enough, 
\begin{align*}
    \E{x_T^{-1} \sum_{j=1}^{K_{1}} x_T\p{Q_{1j} > \epsilon T/2} \mid D_{1} > 2rx_T} &\le \delta\, \E{x_T^{-1} K_1 \mid D_{1} > 2rx_T}.
\end{align*}
Now, because $K_{1}$ and $D_{1}$ are (potentially) both regularly varying, with minimal index given by $\alpha > 1$, the vector $(K_{1}, D_{1})$ is jointly regularly varying with index $\alpha > 1$ (by Theorem 1.1 in \cite{bdm02}); by Corollary 2.1.10 in \cite{ks20}, it then follows that 
$$\E{x_T^{-1} K_{1} \mid D_{1} > 2rx_T} = \cO(1), \text{ as } T \rightarrow \infty.$$ 

Now, similarly as before, by the regular variation of $D_1$ and uniform integrability of $N_0(T)/T$, $$v'(x_T) \E{N_0(T)} \p{D_1 > 2rx_T} = (\E{N_0(T)}/T) v(x_T) \p{D_1 > 2rx_T} = \cO(1), \text{ as } T \rightarrow \infty.$$

Recollecting with the above shows that 
\begin{align*}
     v'(x_T) & \p{\max_{1 \le j \le K_{(N)}} \big(Q_{(N)j}/T\big) > \epsilon/2,D_{(N)} > 2rx_T} \\ &\le \delta\, \E{x_T^{-1}K_{1} \mid D_{1} > 2rx_T} (\E{N_0(T)}/T)v(x_T)\p{D_{1} > 2rx_T} = \cO(\delta),
\end{align*}

which then yields, as $\delta \rightarrow 0$, that
$$\T_{22} = \smallO(1), \text{ as } T \rightarrow \infty.$$
This concludes the proof in the case $k=0.$

\textbf{Case of $k \ge 1$:}

We now have to show that {\footnotesize\begin{equation*} v'(x_T)^{k+1} \p{d_{M_1}\Big(x_T^{-1} \sum_{i=0}^{k} D_{(N-i)}  \I{\Gamma_{(N-i)} \le tT},\,  x_T^{-1} \sum_{i=0}^{k} \sum_{j=0}^{\mfK_{(N-i)}} Z_{(N-i)j} \I{\Gamma_{(N-i)} + W_{(N-i)j} \le tT}\Big) > \epsilon,\,  d_{M_1}(x_T^{-1}\tilde{R}_T, \D_k) > r} = \smallO(1).\end{equation*}}

We only sketch the proof in this case, since it is exactly the same as in the case $k=0$ except that we have to consider the $(k+1)$ largest clusters. Hence, first note that, as in Lemma 6.1 in \cite{dtw22}, $$\{d_{M_1}(x_T^{-1} \tilde{R}_T, \D_k) > r\} \subseteq \{D_{(N-k) }  >2rx_T\}.$$ On the spatial parts, the spirit remain the same: it is only necessary to control those events triggered by points in $[0,T]$ that occur after $T$, independently for each cluster; using Assumption~\ref{assumption5}, it means that the corresponding expression to $\T_{21}$ in the case $k=0$ is given, in the general case, by:
{\footnotesize
\begin{multline*}
   \T_{21} =  v'(x_T)^{k+1} \p{x_T^{-1} \sum_{i=0}^{k} \sum_{j=1}^{\mfM_{(N-i)}} Z_{(N-i)j} \I{Q_{(N-i)j}  > T - \Gamma_{(N-i)}} > \epsilon/2, D_{(N-k)} > 2rx_T} \\ = v'(x_T)^{k+1} \E{\p{x_T^{-1} \sum_{i=0}^{k} \sum_{j=1}^{\mfM_{(N-i)}} Z_{(N-i)j} \I{Q_{(N-i)j} > T - \Gamma_{(N-i)}} > \epsilon/2, D_{(N-k)} > 2rx_T \mid  N_0(T)}}. 
\end{multline*}
}

On the inner conditional probability, we use an union bound: 
\begin{multline*}
    \p{x_T^{-1} \sum_{i=0}^{k} \sum_{j=1}^{\mfM_{(N-i)}} Z_{(N-i)j} \I{Q_{(N-i)j} > T - \Gamma_{(N-i)}} > \epsilon/2, D_{(N-k)} > 2rx_T \mid  N_0(T)} \\ \le \sum_{i=0}^{k} \p{x_T^{-1} \sum_{j=1}^{\mfM_{(N-i)}} Z_{(N-i)j} \I{Q_{(N-i)j} > T - \Gamma_{(N-i)}} > \epsilon/(2k+2), D_{(N-k)} > 2rx_T \mid  N_0(T)}.
\end{multline*} 

Note that the full conditioning event, knowing that the clusters are exchangeable, can be written as 
\begin{align*}
    \{D_{(N-k)} > 2rx_T\} &= \{D_{(N-k)} > 2rx_T, D_{(N-k+1) }>2rx_T, \ldots, D_{(N)} > 2rx_T\} \\
    &= \{\exists 1 \le i_1 < \ldots < i_{k+1} \le N_0(T): D_{i_l} > 2rx_T\}.
\end{align*}

The number of strictly ordered $(k+1)-$tuples $(i_1 < \ldots < i_{k+1})$ that can be selected among $N_0(T)$ is $\binom{N_0(T)}{k+1}$. We proceed similarly as in the case of $k=0$, recognising that the clusters are independent, by bounding from above the quantity:
\footnotesize{
\begin{multline*}
    \sum_{i=0}^{k} \p{x_T^{-1} \sum_{j=1}^{\mfM_{(N-i)}} Z_{(N-i)j} \I{Q_{(N-i)j} > T - \Gamma_{(N-i)}} > \epsilon/(2k+2), D_{(N-k)} > 2rx_T \mid  N_0(T)} \\
    =  \sum_{i_1 < \ldots < i_{k+1}} \mathbb{P}\Bigg(x_T^{-1} \sum_{j=1}^{\mfM_{(N-i)}} Z_{(N-i)j} \I{Q_{(N-i)j} > T - \Gamma_{(N-i)}} > \epsilon/(2k+2), \\ D_{i_1} > 2rx_T, \ldots, D_{i_{k+1}} > 2rx_T, (N) = i_{k+1}, \ldots, (N-k) = i_1 \mid  N_0(T)\Bigg) \\
    \le \sum_{i_1 < \ldots < i_{k+1}} \mathbb{P}\Bigg(x_T^{-1} \sum_{j=1}^{\mfM_{i_l}} Z_{i_lj} \I{Q_{i_lj} > T - \Gamma_{i_l}} > \epsilon/(2k+2), D_{i_1} > 2rx_T, \ldots, D_{i_{k+1}} > 2rx_T \mid  N_0(T)\Bigg) \\
    \le \p{D_1 > 2rx_T \mid N_0(T)}^{k} \binom{N_0(T)}{k+1} \p{x_T^{-1} \sum_{j=1}^{\mfM_{1}} Z_{1j} \I{Q_{1j} > T - \Gamma_{1}} > \epsilon/(2k+2), D_{1} > 2rx_T \mid N_0(T)} \\
    \le \p{D_1 > 2rx_T \mid N_0(T)}^{k+1} \binom{N_0(T)}{k+1} \p{x_T^{-1} \sum_{j=1}^{\mfM_{1}} Z_{1j} \I{Q_{1j} > T - \Gamma_{1}} > \epsilon/(2k+2) \mid D_{1} > 2rx_T, N_0(T)}.
\end{multline*}
}

Now, the second conditional probability is controlled using Lemma~\ref{lem:sbj}, similarly as in the case $k=0$, which yields that $$ \p{x_T^{-1} \sum_{j=1}^{\mfM_{i_l}} Z_{i_lj} \I{Q_{i_lj} > T - \Gamma_{i_l}} > \epsilon/(2k+2) \mid D_{i_l} > 2rx_T, N_0(T)} = \smallO(1), \text{ as } T \rightarrow \infty.$$ Recollecting the combinatorial factor with the first conditional probability and the speed $v'(x_T)^{k+1}$, using the uniform integrability of $(N_0(T)/T)^{k+1}$ and the regular variation property of $D_1$ yields $$v'(x_T)^{k+1} \E{\binom{N_0(T)}{k+1}} \p{D_1 > 2rx_T}^{k+1} = \cO(1), \text{ as } T \rightarrow \infty. $$ All in all, this once again proves that $$\T_{21} = \smallO(1), \text{ as } T \rightarrow \infty.$$


The second expression to control, $\T_{22}$, is of the form

\begin{align*}
   \T_{22} &\le v'(x_T) \E{\p{\max_{0 \le i \le k}\,\max_{1 \le j \le K_{(N-i)}} \big(Q_{(N-i)j}/T\big) > \epsilon/2, D_{(N-k)} > 2rx_T \mid N_0(T) }}.
\end{align*}

Using a union bound, and strictly similar combinatorial and exchangeability arguments used to treat term $\T_{21}$ yields 

{\footnotesize
\begin{multline*}
    \p{\max_{0 \le i \le k}\,\max_{1 \le j \le K_{(N-i)}} \big(Q_{(N-i)j}/T\big) > \epsilon/2 \mid D_{(N-k)} > 2rx_T, N_0(T) } \\ \le (\E{N_0(T)}/T)^{k+1} \big(v(x_T)\p{D_1 > 2rx_T})\big)^{k+1}  \p{\max_{1 \le j \le K_{1}} \big( Q_{1j}/T \big)> \epsilon/(2k+2) \mid D_{1} > 2rx_T, N_0(T) }
\end{multline*}
}

which is negligible by the same arguments as in the case $k=0$, and overall this yields
$$\T_{22} = \smallO(1), \text{ as } T \rightarrow \infty.$$

The sketch of the proof for $k\ge 1$ is complete, which concludes the proof of the proposition. 
\end{proof}

\appendix

\section{Proof of Corollary~\ref{cor:mb}}\label{app:pcmb}

\begin{proof}

To prove the corollary, we verify Assumptions~\ref{assumption3} to~\ref{assumption5} and Assumption~\ref{assumption7} from Section~\ref{section:res}. The result then follows from Theorem~\ref{thm:clustrv}. 

\begin{enumerate}
    \item[] \textbf{Assumption 1.} The assumption that $(X_{i0}, K_{A_{i0}}) \in \text{RV}((\R_+ \setminus \{0\}) \times (\R_+ \setminus \{0\}), v(\cdot), \mu') $ implies, by Proposition 6 in \cite{bcw23}, because $\{X_{ij}\}_{j \ge 0}$ is an i.i.d. sequence, that $$\p{D_i > x} \sim \p{X_{i0} + \E{X_{i0}}K_{A_{i0}} > x} + \E{K_{A_{i0}}} \p{X_{i1} > x}, \text{ as } x \rightarrow \infty.$$ Define $g(\cdot, \cdot): \R^2_+ \rightarrow \R_+,$ by $g(x_0, k) = x_0 + \E{X} k,$ which is a positively 1-homogeneous map. By e.g. Subsection 3.2.5.2 in \cite{mw24}, this implies that $$v(x_T) \p{x_T^{-1}(X_{i0}+\E{X}K_{A_{i0}}) \in \cdot} \rightarrow \mu' \circ g^{-1}(\cdot) \eqdef \mu_g(\cdot), \text{ as } T \rightarrow \infty,$$ which shows in essence that, for each $i \ge 1$, $$v(x_T) \p{x_T^{-1} D_i \in \cdot} \rightarrow \mu(\cdot), \text{ as } T \rightarrow \infty,$$ with $\mu(\cdot) = \mu_g(\cdot) + \E{K_{A_{i0}}} \mu_{X_{i0}}(\cdot),$ where $\mu_{X_{i0}}(\cdot)$ is the projection on the first coordinate of $\mu'(\cdot)$. This essentially shows that $D_i \in \text{RV}(\R_+ \setminus \{0\}, v(\cdot), \mu).$
    \item[] \textbf{Assumption 3.} In the notations of Section~\ref{section:res}, for each $i \ge 1$, $\mfK_i \equiv K_{A_{i0}}$, the total cluster size of the $i$th cluster, and $\{W_{ij}\}_{j \ge 0}$ is the sequence of first-generation offspring waiting times, which are conditionally independent given $\cF_{FG,i} = \sigma(A_{i0})$, and conditionally independent from $K_{A_{i0}}$ given $\cF_{FG,i} = \sigma(A_{i0}).$  By the joint regular variation assumption for each $i \ge 1$ of $(X_{i0}, K_{A_{i0}})$, and because projections on coordinates are 1-homogeneous maps, $v(x_T)\p{K_{A_{i0}} > x_T} \rightarrow \mu'(\{k > 1\})$ as $T \rightarrow \infty$ for $\mu'(\cdot)$ in Assumption 1 of \textbf{(A. MB)}. Hence, $$\frac{\p{K_{A_{i0}} > x_T}}{\p{D_i > x_T}} \rightarrow \frac{\mu'(\{k > 1\})}{\mu((1, \infty))} \eqdef c \in [0,\infty)$$ as $x \rightarrow \infty$ depending on the form of $\mu'(\cdot)$ and the subspace charged by it. 
    \item[] \textbf{Assumption 4.} In notations of Section~\ref{section:check}, for each $i \ge 1, j \ge 0$, $Z_{ij} \equiv X_{ij},$ and since the transformed marks  $X_{ij} \defeq f(A_{ij})$ are i.i.d. ($f(\cdot): \A \rightarrow \R_+$ ), it follows from the joint regular variation assumption of $(X_{i0}, K_{A_{i0}})$ that, for each $i \ge 1, j \ge 1$, because projections on coordinates are 1-homogeneous, $v(x_T)\p{X_{i0} > x_T} \rightarrow \mu'(\{x_0 > 1\})$ for $\mu'(\cdot)$ in Assumption 1 of \textbf{(A. MB)}. Hence, $$\frac{\p{X_{i0} > x_T}}{\p{D_i > x_T}} \sim \frac{\p{X_{ij}> x_T}}{\p{D_i > x_T}} \rightarrow \frac{\mu'(\{k > 1\})}{\mu((1, \infty))} \eqdef c \in [0,\infty)$$ as $T \rightarrow \infty$, depending on the form of $\mu'(\cdot)$ and the subspace charged by it. 
    \item[] \textbf{Assumption~5.} 
    For each cluster $i \ge 1$, the remainder term is given by
    
    $$D_i^{>T} \defeq \sum_{j=1}^{K_{A_{i0}}} X_{ij} \I{W_{ij} > T - \Gamma_i} \text{ a.s.},$$
    where, in the notations of Section~\ref{section:res},  $\{Q_{ij}\}_{j \ge 1} \equiv \{W_{ij}\}_{j \ge 1}$ are conditionally i.i.d. given $\cF_{CI, i} \equiv \sigma(A_{i0})$, and conditionally independent from $\mfM_i \equiv K_{A_{i0}}$ given $\cF_{CI, i} \equiv \sigma(A_{i0})$; $\mfM_i \equiv K_{A_{i0}}$ is such that $\p{K_{A_{i0}} > x} \sim c_1 \p{D_i > x}, $ as $x \rightarrow \infty$ for $c_1 \in [0,\infty)$ (which is precisely what we checked in Assumption 3 above), and $\{Z_{ij}\}_{j \ge 1} \equiv \{X_{ij}\}_{j \ge 1}$ is an i.i.d. sequence of nonnegative random variables, such that $\p{Z_{ij} > x} \sim c_2 \p{D_i > x}, $ as $x \rightarrow \infty$ for $c_2 \in [0,\infty)$, precisely by what we checked in Assumption 4 above. We verify that, for each $i \ge 1$ and $j \ge 1$, $$\p{Q_{ij} > T - \Gamma_i \mid \cF_{CI,i}} = \p{W_{ij} > T - \Gamma_i \mid \cF_{CI,i}} = \smallO(1) \text{ a.s.},\, \text{ as } T \rightarrow \infty.$$



        Since $\Gamma_i \sim \text{Unif}[0,T]$, letting $T - \Gamma_i \eqdist U_{T} \sim \text{Unif}[0,T]$, we have to show, for a fixed $i \ge 1$ and for each $j \ge 1$ that $$\p{W_{ij} > U_{T} \mid \cF_{CI, i}} = \smallO(1), \text{ as } T \rightarrow \infty.$$ By Markov inequality, for each fixed $i \ge 1$ and $j \ge 1$,
        \begin{align*}
            \p{W_{ij} > U_{T} \mid \cF_{CI,i}} &= \frac{1}{T} \int_{0}^{T} \p{W_{ij} > u \mid \cF_{CI,i}} \diff u  \le \frac{1}{T} \E{W \mid \cF_{CI,i}} = \smallO(1) \text{ a.s.} , \text{ as } T \rightarrow \infty, 
        \end{align*}
        where negligibility follows because $\E{W} < \infty$ implies that $\E{W \mid \cF} < \infty$ a.s.. 

    \item[] \textbf{Assumption 7.} We finally verify that Assumption~\ref{assumption7} holds for this submodel. Recall that, for each fixed $i \ge 1$, $K_{A_{i0}}$ and $\{W_{ij}\}_{j \ge 0}$ are conditionally independent given the $\sigma-$algebra $\cF_{FG,i} =  \sigma(A_{i0})$. First, note that, for $j=0$, the indicator $\I{\Gamma_i + W_{i0} \le tT} = 1$, for the immigrant event always occur in the sum over $N_0(tT)$. We therefore consider $$ \E{\sum_{i=1}^{N_0(tT)} \sum_{j=0}^{K_{A_{i0}}} X_{ij} \I{\Gamma_i + W_{ij} \le tT}} =  \E{\sum_{i=1}^{N_0(tT)} \Big( X_{i0} +  \sum_{j=1}^{K_{A_{i0}}} X_{ij} \I{\Gamma_i + W_{ij} \le tT}\Big)}.$$ Using Campbell's Theorem (see e.g. Theorem 1.2.1 in \cite{bremaud20}), and recognising in the sum that the sequence $\{X_{ij}\}_{j \ge 1}$ is i.i.d., it holds that
        \begin{align*}
             \E{\sum_{i=1}^{N_0(tT)} \sum_{j=0}^{K_{A_{i0}}} X_{ij} \I{\Gamma_i + W_{ij} \le tT}} &= \lambda \int_0^{tT} \E{X_{i0} + \sum_{j=1}^{K_{A_{i0}}} X_{ij} \I{W_{ij} \le tT- u}} \diff u \\
             &= \lambda tT\E{X_{i0}} + \lambda \E{X_{i1}} \int_0^{tT} \E{ \E{K_{A_{i0}} \mid \cF_{FG,i}} \p{W \le tT-u \mid \cF_{FG,i}}} \diff u \\
             &= \lambda tT\E{X_{i0}} + \lambda \E{X_{i1}} \E{\E{K_{A_{i0}} \mid \cF_{FG,i}} \int_0^{tT}  \p{W \le tT-u \mid \cF_{FG,i}} \diff u} \\
             &= \lambda tT\E{X} + \lambda \E{X_{i1}} \E{\E{K_{A_{i0}} \mid \cF_{FG,i}} \int_0^{1} tT  \p{W \le tTy \mid \cF_{FG,i}} \diff y} \\
             &= \lambda tT\E{X_{i0}} + \lambda tT \E{X_{i1}} \E{\E{K_{A_{i0}} \mid \cF_{FG,i}} \p{W \le tTU \mid \cF_{FG,i}}}
        \end{align*}
        where $U \sim \text{Unif}[0,1]$, and the third equality holds by Fubini's theorem. Rewriting the quantity of Assumption~\ref{assumption7} of Section~\ref{section:res}, upon noting that, generically, because the clusters $i$ are i.i.d.,  $\E{D} \equiv \E{X} + \E{X} \E{K_{A_0}}$ therein, which holds in the mixed Binomial case, and using a triangular inequality, 
        \begin{align*}
           &\sup_{0 \le t \le 1} \Bigg\lvert x_T^{-1}\Big(N_0(tT) \big( \E{X} +  \E{X} \E{K_{A_0}} \big) - \lambda t T \big(\E{X} + \E{X} \E{\E{K_{A_0} \mid \cF_{FG}} \p{W \le tTU \mid \cF_{FG}}}\Big) \Bigg\rvert \\
           &\quad \le \sup_{0 \le t \le 1} \Bigg\lvert x_T^{-1} \Big(N_0(tT) \E{X} - \lambda tT \E{X}  \Big) \Bigg\rvert \\
           &\qquad + \sup_{0 \le t \le 1} \Bigg\lvert x_T^{-1} \Big(N_0(tT) \E{X} \E{K_{A_0}} - \lambda tT \E{X} \E{K_{A_0}}  \Big) \Bigg\rvert \\
           &\qquad + \sup_{0 \le t \le 1} \Bigg\lvert  x_T^{-1} \lambda tT \E{X} \Big(\E{K_{A_0}} - \E{\E{K_{A_0} \mid \cF_{FG,i}} \p{W \le tTU  \mid \cF_{FG}}} \Big)\Bigg\rvert \\
           &\eqdef \T_{1} + \T_{2} + \T_{3}.
        \end{align*}
        The third term $\T_{3}$ can be neglected asymptotically: one can rewrite it as 
        \begin{align*}
            \T_{3} &= \sup_{0 \le t \le 1} \Big\lvert x_T^{-1} \lambda t T \E{K_{A_0}(1-\p{W \le tTU \mid \cF_{FG}}}\Big\rvert \\
            &= \sup_{0 \le t \le 1} \Big\lvert x_T^{-1} \lambda t T \E{K_{A_0}\p{W > tTU \mid \cF_{FG}}}\Big\rvert. 
        \end{align*}
        
        This amounts to control $$\frac{\sup_{0 \le t \le 1} \E{tT \p{W > tTU \mid \cF_{FG}}}}{x_T}.$$
        For the numerator, note that one can actually consider $$\sup_{0 \le t \le 1} \E{tT \p{W > tUT \mid \cF_{FG}}} = \sup_{0 \le t \le 1} \E{tT \p{U < W/(tT) \mid \cF_{FG}}}$$ and, because $U \sim \text{Unif}(0,1)$, $tT U \eqdist U_{tT} \sim \text{Unif}(0,tT).$ This implies, by Markov, that 
        $$\p{U_{tT} < W \mid \cF_{FG} } = \frac{1}{tT} \int_{0}^{\infty} \p{W > u \mid \cF_{FG,i}} \diff u \le \frac{1}{tT} \E{W \mid \cF_{FG}} $$ which yields 
        $$\sup_{0 \le t \le 1} \E{tT \p{U_{tT} < W \mid \cF_{FG} }} \le \E{W \mid \cF_{FG}}.$$
        Since $\E{W} < \infty$, this implies $\E{W \mid \cF_{FG}} < \infty$ a.s., and it follows that
        $$\limsup_{T \rightarrow \infty} \frac{\sup_{0 \le t \le 1}\E{tT \p{W > tUT \mid \cF_{FG}}}}{x_T} = \smallO(1), \text{ as } T \rightarrow \infty. $$
        
        Because $\T_{3}$ is asymptotically negligible (in front of $x_T$) in the probability of interest, we regroup it with $\T_2$ and, by a triangular inequality, we in fact need to control the following upper bound, for each $k \ge 0$ and for all $\epsilon, r > 0$, 
        \begin{align*}
            &v'(x_T)^{k+1}\p{\T_1 + \T_2 + \T_3 > \epsilon, D_{(N-k)} > 2rx_T} \\
            &\qquad\le v'(x_T)^{k+1} \p{\T_1 > \epsilon/2, D_{(N-k)} > 2rx_T} + v'(x_T)^{k+1} \p{\T_2 + \T_3 > \epsilon/2, D_{(N-k)} > 2rx_T} \\
            &\qquad \le v'(x_T)^{k+1} \p{ \sup_{0 \le t \le 1} x_T^{-1} \Big\lvert N_0(tT) \E{X} - \lambda tT \E{X}  \Big\rvert> \epsilon/2, D_{(N-k)} > 2rx_T} \\
            &\qquad \quad + v'(x_T)^{k+1}\p{\sup_{0 \le t \le 1} \Big\lvert \big(N_0(tT) \E{X}\E{K_{A_0}} - \lambda tT \E{K_{A_0}} \E{X} \big) \Big\rvert > \epsilon x_T (1+\smallO(1))/2, \, D_{(N-k)} > 2rx_T}
        \end{align*}
        Both terms in the above upper bound are treated in the same fashion, because their first event only differs by a constant. Without loss of generality, we show that the second expression is negligible. 
        Because $N_0(tT)$ is an independent Poisson random variable, independent from $D_{(N-k)}$, $$v'(x_T)^{k+1}\p{D_{(N-k)} > 2rx_T}\p{\sup_{0 \le t \le 1} \Big\lvert \big(N_0(tT) \E{X}\E{K_{A_0}} - \lambda tT \E{K_{A_0}} \E{X} \big) \Big\rvert > \epsilon x_T(1+\smallO(1))}$$ and the first product $v'(x_T)^{k+1} \p{D_{(N-k)} > 2rx_T} = \cO(1)$ as $T  \rightarrow \infty$, due to the regular variation of $Z$ and using the properties of order statistics, see similar arguments in the proof of Proposition~\ref{proof:t2}. Hence, we are left to show, for all $\epsilon >0,$ $$\p{\sup_{0 \le t \le 1} \lvert N_0(tT) - \lambda tT \rvert > \frac{\epsilon x_T(1+\smallO(1))}{2\E{X}\E{K_{A_0}}}} = \smallO(1), \text{ as } T \rightarrow \infty.$$
        
        Let $a_T \defeq \frac{\epsilon x_T(1+\smallO(1))}{2\E{X}\E{K_{A_0}}}.$ Recognise by standard Poisson theory that $B(t) \defeq N_0(tT) - \lambda tT$ is an $\cF_{tT}^{N_0}$-martingale, where $\cF_{tT}^{N_0}$ is the natural filtration of the process $N_0(\cdot)$; it follows from Doob's inequality, and using the fact that $N_0(tT)$ is a Poisson random variable, with Jensen inequality, that $$\p{\sup_{0 \le t \le 1} |B(t)| > a_T } \le \frac{\E{|B(1)|}}{a_T} =  \frac{\E{|N_0(T)-\lambda T|}}{a_T} \le \frac{\sqrt{\lambda T}}{a_T} = \smallO(1), \text{ as } T \rightarrow \infty,$$ where final negligibility follows from the choice of $x_T$.
\end{enumerate}

Having verified all the assumptions of Section~\ref{section:res}, the proof of the corollary is complete. 
\end{proof}

\section{Proof of Corollary~\ref{cor:hawkes}}\label{app:pchawkes}

\begin{proof}
We then verify each of the assumptions from Section~\ref{section:res}. The corollary then follows by Theorem~\ref{thm:clustrv}. 
\begin{enumerate}
    \item[] \textbf{Assumption 1.} The assumption that $(X_{ij}, \kappa_{A_{ij}}) \in \text{RV}((\R_+ \setminus \{0\}) \times (\R_+ \setminus \{0\}), v(\cdot), \mu') $ implies, by Proposition 6 in \cite{bcw23}, because $\{X_{ij}\}_{j \ge 0}$ is an i.i.d. sequence, that $$\p{D_i > x} \sim \frac{1}{1-\E{\kappa_A}} \p{X_{ij} + \frac{\E{X}}{1-\E{\kappa_{A_{ij}}}} \kappa_{A_{ij}} > x },\, \text{ as } x \rightarrow \infty.$$ Define $g(\cdot, \cdot): \R^2_+ \rightarrow \R_+,$ by $g(x, k) = x + \big(\E{X}/(1-\E{\kappa_A})\big) k,$ which is a positively 1-homogeneous map. By e.g. Subsection 3.2.5.2 in \cite{mw24}, this implies that $$v(x_T)\p{x_T^{-1}\Big(X_{ij} + \frac{\E{X}}{1-\E{\kappa_{A_{ij}}}} \kappa_{A_{ij}}\Big) \in \cdot } \rightarrow \mu' \circ g^{-1}(\cdot) \eqdef \mu_g(\cdot), \text{ as } T \rightarrow \infty$$ which shows in essence that, for each $i \ge 1$, $$v(x_T) \p{x_T^{-1} D_i \in \cdot} \rightarrow \mu(\cdot), \text{ as } T \rightarrow \infty,$$ with $\mu(\cdot) = \mu_g(\cdot).$ This essentially shows that $D_i \in \text{RV}(\R_+ \setminus \{0\}, v(\cdot), \mu).$
    \item[] \textbf{Assumption 3.} For Equation~\eqref{eq:cs2}, and in the notation of Section~\ref{section:res}, for each cluster $i \ge 1$, $\mfK_i \equiv L_{A_{i0}}$, where $L_{A_{i0}}$ denotes the number of first-generation offspring associated with the immigrant mark $A_{i0}$. Note that $L_{A_{i0}} \le K_i$ a.s., where $K_i$ is the total number of events in the $i$th cluster. 

For each $i \ge 1$, the vector $(L_{A_{i0}}, \{W_{ij}\}_{j \ge 0})$ is generally dependent, but conditionally independent given $\cF_{FG,i} = \sigma(A_{i0})$. Moreover, the sequence of first-generation waiting times $\{W_{ij}\}_{j \ge 0}$ is conditionally i.i.d. given $\cF_{A_{i0}} = \sigma(A_{i0})$. Recall we set $W_{i0} \defeq 0 $ by convention for each $i \ge 1$. 

For an event $l$, the waiting time to its $j$th first-generation offspring $W_{lj}$, conditionally on the mark $A_l$ of the triggering, event admits a density with respect to Lebesgue measure given by (see e.g. \cite{mr05})
$$
\frac{h(\cdot, A_l)}{\int_0^{\infty} h(s, A_l) \, \diff s},
$$
where $h(t, A_l)$ is the fertility function defined in Subsection~\ref{section:intropp}.

By the joint regular variation assumption for each $i \ge 1, j \ge 1$ of $(X_{ij}, \kappa_{A_{ij}})$, and because projections on coordinates are 1-homogeneous maps, $v(x_T)\p{\kappa_{A_{ij}} > x} \rightarrow \mu'(\{k > 1\})$ as $T \rightarrow \infty,$ for $\mu'(\cdot)$ in Assumption 1 of \textbf{(A. H)}. By the elements in the proof of Proposition~8 of \cite{bcw23}, it further holds that $v(x_T)\p{L_{A_{ij}} > x_T} \rightarrow \mu'(\{k > 1\})$ as $T \rightarrow \infty.$ Hence, $$\frac{\p{L_{A_{ij}} > x_T}}{\p{D_i > x_T}} \rightarrow \frac{\mu'(\{k > 1\})}{\mu((1, \infty))} \eqdef c \in [0,\infty),$$ as $T \rightarrow \infty,$ depending on the form of $\mu'(\cdot)$ and the subspace charged by it. 

    \item[] \textbf{Assumption 4.} For Equation~\eqref{eq:cs2}, and in the notations of Section~\ref{section:res}, for each $i \ge 1$ and for $j = 0$, we recognise $Z_{i0} \equiv D_{i0}^{\le tT} \defeq X_{i0}$, and for each $i \ge 1, j \ge 1$, $Z_{ij} \equiv D_{ij}^{\le tT}.$ Since the transformed marks for each $i \ge 1, j \ge 0$ $X_{ij} \defeq f(A_{ij})$ are i.i.d., the sequence $\{D_{ij}^{\le tT}\}_{j \ge 0}$ is a sequence of independent elements, but is not necessarily identically distributed, because of the truncation at time $tT$; however, this suffices because, for each $i \ge 1, j \ge 1$, $D_{ij}^{\le tT} \le D_{ij} \text{ a.s.}$, the problem being circumvented in the centering term considered in Equation~\eqref{eq:cs2}. Now, for each $i \ge 1, j \ge 1,$ because the sub-sums $D_{ij}$ are self-similar, they are in particular regularly varying with the same properties as $D_i$ in Assumption 1 above. 
    \item[] \textbf{Assumption 5.} 
     In the Hawkes process, each point acts as a potential ancestor to a new, self-similar stream of events. For each cluster $i \ge 1$, the remainder term in Equation~\ref{eq:remaindert} admits a general form given by 
        $$D^{>T}_i \defeq \sum_{l=1}^{N_H(T)}\sum_{j=1}^{L_{A_{il}}} D_{ilj} \I{ \tau_{il} + W_{ilj} > T - \Gamma_i} \text{ a.s.,}$$
where $N_H(T) \defeq N_H([0,T])$ is the total number of points in the $i$th cluster up to time $T$, including all generations, $L_{A_{il}}$ is the number of first-generation offsprings of an event occurring at time $\tau_{il} \in [0,T]$ with mark $A_{il}$, with first-generation offspring waiting times given by the conditionally i.i.d. $\{W_{ilj}\}_{j\ge 1}$ given the $\sigma-$algebra generated by $A_{il}$. Noting that, for each cluster $i \ge 1$ and fixed $l \ge 1$ it is possible to write the waiting time to the $l$th event as  $\tau_{il} = \sum_{k=1}^{\cG(l)} W_{ilk}$ a.s., for an $\N-$valued random variable $\cG(l)$ giving the generation depth of the $l$th event, it is possible to define, for each $j \ge 1$, $$Q_{ilj} \defeq \tau_{il} + W_{ilj} = \sum_{k=1}^{\cG(l)} W_{ilk} + W_{ilj} \text{ a.s.,} $$ the waiting time to the $j$th first-generation offspring of the $l$th event. The sequence $\{Q_{ilj}\}_{j \ge 1}$ is conditionally i.i.d. given $\cF_{CI, i} \defeq \sigma((\tau_{il}, A_{il}): \tau_{il} \le T)$, and further conditionally independent from $L_{A_{il}}$ given $\cF_{CI, i} \defeq \sigma((\tau_{il}, A_{il}): \tau_{il} \le T)$. 

The remainder term can now be re-indexed into a single sum, fitting the form of Equation~\eqref{eq:remaindert} as needed in the proofs of the results of Section~\ref{section:res}: 

$$D^{>T}_i = \sum_{m=1}^{\mfM_i} D_{iI(m)J(m)} \I{Q_{iI(m)J(m)} > T - \Gamma_i}$$

where we define $P_0 = 0$ and $P_y := \sum_{k=1}^{y} L_{A_{ik}}$ so that:
\begin{enumerate}
    \item $I(m)$ is the unique index $y$ such that $P_{y-1} < m \le P_y$,
    \item $J(m) := m - P_{I(m)-1}$
\end{enumerate}
with $\mfM_i \defeq \sum_{l=1}^{N_H(T)} L_{A_{il}}.$ Inheriting properties from the above, the vector $(\mfM_i, \{Q_{iI(m)J(m)}\}_{m \ge 1})$ is generally dependent, but conditionally independent given the $\sigma-$algebra $\cF_{CI,i}$. For each $i \ge 1$, the sequence $\{D_{iI(m)J(m)}\}_{m \ge 1}$ inherits its i.i.d. character from $\{D_{ilj}\}_{l \ge 1, j \ge 1},$ and its elements are regularly varying with common index $\alpha > 1$ by self-similarity, from Assumption 1. For each $i \ge 1$, the sequence $\{Q_{iI(m)J(m)}\}_{m \ge 1}$ is conditionally i.i.d. given $\cF_{CI, i}. $ 

We now need to verify that 
\begin{enumerate}
    \item[] 1. for each $i \ge 1$, $\mfM_i$ is such that $\p{\mfM_i > x} \sim c\p{D_i > x},$ as $x \rightarrow \infty$, for $c \in [0,\infty)$;
    \item[] 2. for each $i \ge 1$, $m \ge 1$ the sequence of waiting times satisfy $\p{Q_{i I(m) J(m)} > T- \Gamma_i \mid \cF_{CI, i}} = \smallO(1) \text{ a.s.}, \text{ as } T \rightarrow \infty.$
\end{enumerate}

To prove 1. we use the arguments in the proof of Theorem 1 in \cite{af18}. First, let $S_n \defeq \sum_{j=1}^{n} \xi_j \defeq \sum_{j=1}^{n} (L_{A_{ij}}-1)$ and note $S_n$ is a random walk, with $\E{\xi_j} < 0$ for each $j \ge 1 $ (since $\E{L_{A_{ij}}} = \E{\kappa_{A_{ij}}} < 1$ for each $i \ge 1, j \ge 1$). Next, define $$\tau = \min\{n \ge 1: S_n <0 \} = \min\{n \ge 1: S_n = -1 \}.$$ 

As emphasised in \cite{af18}, $\tau \eqdist 1 + \sum_{j=1}^{L_A} \tau_j$ is also the total progeny size of a Galton-Watson tree rooted at 0 and giving birth to a generic $L_{A}$ number of offsprings of first generation. Note that, a.s., $L_{A_{i1}} \le \mfM_i \le \tau$. Using the arguments and notations in Theorem 1 of \cite{af18}, and using the main theorem in \cite{fz03}, it holds for each $i \ge 1$, 
\begin{align*}
   \p{L_{A_{i1}} > x_T} \le \p{\mfM_i  > x_T}  &\le \p{\max_{1 \le k \le \tau} \sum_{j=1}^k (1+r \xi_j) > x_T - r}  \le \E{\tau} \p{L_A > x_T / r}
\end{align*}
which yields in particular, since $L_{A_{i1}}$ and the generic $L_A$ are i.i.d., that $\mfM_i$ shares its properties, notably those checked in Assumption 3 above. 

We now verify 2. for each $i \ge 1$ and $m \ge 1$. For simplicity, fix an index $I(m) = l$ and $J(m) = j$.  
    Note that, because $\Gamma_i \sim \text{Unif}[0,T]$, $T- \Gamma_i \eqdef U_T \sim \text{Unif}[0,T]$. Recalling the definition of $Q_{ilj} = \sum_{k=1}^{\cG(l)} W_{ilk} + W_{ilk}$, and knowing that we work conditionally to $\cF_{CI, i}$ which includes $\tau_{il} = \sum_{k=1}^{\cG(l)} W_{ilk},$ we really need to show \begin{align*}\p{Q_{ilj} > U_T \mid \cF_{CI,i}} &= \p{W_{ilj} > U_T - \sum_{k=1}^{\cG(l)} W_{ilk} \mid \cF_{CI, i} } \\ &= \p{W_{ilj} > U_T - \tau_{il} \mid \cF_{CI,i}} \\
    &= \smallO(1) \text{ a.s., } \, \text{ as } T \rightarrow \infty.\end{align*}
    Recognise that, conditionally on $\cF_{CI,i}$, and because $\tau_{il} \in [0,T]$, we consider $U_T - \tau_{il} \sim \text{Unif}[0, T- \tau_{il}]$; by Markov inequality, \begin{align*}
        \p{W_{ilj} > U_T - \tau_{il} \mid \cF_{CI,i}} &= \frac{1}{T- \tau_{il}} \int_{0}^{T-\tau_{il}} \p{W_{ilj} > u \mid \cF_{CI, i}} \diff u  \\
        &\le  \frac{1}{T- \tau_{il}} \E{W_{ilj} \mid \cF_{CI, i}} \int_{0}^{T-\tau_{il}} \frac{1}{u} \diff u.  \end{align*}
    Because $W$ has a proper density near $u=0$, it classically follows that, for some $\epsilon >0$,  $$\p{W_{ilj} > U_T - \tau_{il} \mid \cF_{CI,i}} \le \frac{1}{T-\tau_{il}} \big(\epsilon + \E{W_{ilj} \mid \cF_{CI, i}} \log((T-\tau_{il})/\epsilon)\big) = \smallO(1) \text{ a.s.,} \, \text{ as } T \rightarrow \infty $$
    where the final step follows from the assumption that $\E{W} < \infty$ which implies $\E{W_{ilj} \mid \cF_{CI, i}} < \infty \text{ a.s..}$
    
\item[] \textbf{Assumption 6}. 
We need to show that, for each cluster $i \ge 1$, and each event $j \in \{1, \ldots, K_i\}$ (where we recall $K_i$ is the total cluster size of the $i$th cluster), it holds $$x_T \p{Q_{ij} > \epsilon T} = \smallO(1), \text{ as } T, x_T \rightarrow \infty.$$ 

Recalling that we can write for any cluster $ i \ge 1$ the waiting time to its $j$th event as $$Q_{ij} = \sum_{l=1}^{\cG(j)} W_{il}\, \text{ a.s. }$$ for some $\N-$valued generational depth random variable $\cG(j)$, letting $M > 0$ be large enough, we note~that
\begin{align*}
x_T \p{Q_{ij} > \epsilon T}
&= x_T \sum_{g=1}^{\infty} \p{Q_{ij} > \epsilon T \mid \cG(j) = g} \p{\cG(j) = g} \\
&= x_T \sum_{g=1}^{M} \p{\sum_{l=1}^{g} W_{il} > \epsilon T \mid \cG(j) = g}\p{\cG(j) = g} \\
&\quad + x_T \sum_{g=M+1}^{\infty} \p{\sum_{l=1}^{g} W_{il} > \epsilon T \mid  \cG(j) = g}\p{\cG(j) = g}.
\end{align*}

Using a union bound,  we have
\begin{align*}
x_T \sum_{g=1}^{M} & \p{\sum_{l=1}^{g} W_{il} > \epsilon T \mid \cG(j) = g}\p{\cG(j) = g} \\ &\le x_T \sum_{g=1}^M \sum_{l=1}^g \p{W_{il} > \frac{\epsilon T}{2g}}\p{\cG(j)=g} + x_T \sum_{g=M+1}^{\infty} \sum_{l=1}^g \p{W_{il} > \frac{\epsilon T}{2g}}\p{\cG(j)=g}.
\end{align*}

Because  $\{W_{il}\}_{l \ge 1}$ is an identically distributed sequence, the first sum is bounded from above by
$$x_T \sum_{g=1}^M \sum_{l=1}^g \p{W_{il} > \frac{\epsilon T}{2g}}\p{\cG(j)=g} \le x_T M \p{W>\frac{\epsilon T}{2M}},$$
which vanishes by our assumption that, for all $\epsilon > 0$, 
$$x_T\p{W>\epsilon T}=\smallO(1), \quad\text{ as }T \rightarrow \infty.$$

To control the second sum, let $\delta >0$ be small. Split further at $T^{\delta},$ recalling that for large $g$, in the case of subcritical Galton-Watson trees, $\p{\cG(j) = g} \sim C(1-\E{\kappa_A})\E{\kappa_A}^g$ for some $C > 0$, and that $\{W_{il}\}_{l \ge 1}$ are identically distributed, which yields for the second term an upper bound of the form, 
\begin{align*}
    x_T \sum_{g=M+1}^{\infty} \p{\sum_{l=1}^{g} W_{il} > \epsilon T \mid  \cG(j) = g}\p{\cG(j) = g} &\le  C(1-\E{\kappa_A})\sum_{g=M+1}^{\infty} x_T g \p{W>\frac{\epsilon T}{2g}} \E{\kappa_A}^g  
    \\ &= C(1-\E{\kappa_A})\sum_{g=M+1}^{\lfloor T^\delta\rfloor}x_T \p{W>\frac{\epsilon T}{2g}}g\E{\kappa_A}^g \\
    &\quad + C(1-\E{\kappa_A}) \sum_{g=\lfloor T^\delta\rfloor+1}^{\infty}x_T\p{W>\frac{\epsilon T}{2g}}g\E{\kappa_A}^g.
\end{align*}

The first part is bounded above by
\begin{align*}
C(1-\E{\kappa_A})\sum_{g=M+1}^{\lfloor T^\delta\rfloor}x_T \p{W>\frac{\epsilon T}{2g}}g\E{\kappa_A}^g  &\le x_T\p{W>\frac{\epsilon T^{1-\delta}}{2}}C(1-\E{\kappa_A})\sum_{g=1}^{\infty}g\E{\kappa_A}^g.
\end{align*}

Since $\E{\kappa_A}<1$, this series converges absolutely. Thus, it suffices that for sufficiently small $\delta >0$ and all $\epsilon > 0$, 

$$
x_T\p{W>\epsilon T^{1-\delta}}=\smallO(1),\, \text{ as } T \rightarrow \infty,$$ 
which is guaranteed by our Assumption 6' in \textbf{(A. H)}. 

For the second part, using properties of subcritical Galton-Watson processes, and bounding the probability by 1, is bounded from above by
\begin{align*}
    C(1-\E{\kappa_A}) \sum_{g=\lfloor T^\delta\rfloor+1}^{\infty}x_T\p{W>\frac{\epsilon T}{2g}}g\E{\kappa_A}^g &\le x_T\,C(1-\E{\kappa_A})\sum_{g=\lfloor T^\delta\rfloor+1}^{\infty}g\E{\kappa_A}^g \sim x_T \frac{T^\delta \E{\kappa_A}^{T^\delta}}{(1-\E{\kappa_A})},
\end{align*}

which vanishes exponentially fast since $\E{\kappa_A}<1$. Thus, this second term is negligible, as exponential decay dominates any polynomial (or sub-exponential) growth of $x_T$ as dictated in Assumption~\ref{assumption2}, ensuring it is $\smallO(1)$ as $T  \to \infty.$

Combining both parts yields the desired result, for each fixed $i \ge 1$ and each $j \ge 1$, all $\epsilon >0$
$$x_T\p{Q_{ij}>\epsilon T}=\smallO(1),\quad\text{ as } T\to\infty.$$

\item[] \textbf{Assumption 7.} We sketch the verification of Assumption~\ref{assumption7} for the Hawkes process, since it is very similar to the mixed Binomial case undertaken in the proof of Corollary~\ref{cor:mb}. Recall that, in the centering term of Equation~\eqref{eq:cs2}, for each $i \ge 1$, $L_{A_{i0}}$ and $\{W_{ij}\}_{j \ge 1}$ are conditionally independent given the $\sigma-$algebra $\cF_{FG,i} = \sigma(A_{i0}).$ 
    We consider the centering in Equation~\eqref{eq:cs2}: $$ \E{\sum_{i=1}^{N_0(tT)} \Big(D_{i0} +  \sum_{j=1}^{L_{A_{i0}}} D_{ij} \I{\Gamma_i + W_{ij} \le tT} \Big)} =  \E{\sum_{i=1}^{N_0(tT)} \Big( X_{i0} +  \sum_{j=1}^{L_{A_{i0}}} D_{ij} \I{\Gamma_i + W_{ij} \le tT}\Big)}.$$ Using Campbell's Theorem (see e.g. Theorem 1.2.1 in \cite{bremaud20}), and recognising in the sum that the sequence $\{D_{ij}\}_{j \ge 1}$ is i.i.d., it holds that
        \begin{align*}
             \E{\sum_{i=1}^{N_0(tT)} \Big(X_{i0} + \sum_{j=0}^{l_{A_{i0}}} D_{ij} \I{\Gamma_i + W_{ij} \le tT}\Big)} &= \lambda \int_0^{tT} \E{X_{i0} + \sum_{j=1}^{L_{A_{i0}}} D_{ij} \I{W_{ij} \le tT- u}} \diff u \\
             &= \lambda tT\E{X_{i0}} + \lambda \E{D} \int_0^{tT} \E{ \E{L_{A_{i0}} \mid \cF_{FG,i}} \p{W \le tT-u \mid \cF_{FG,i}}} \diff u \\
             &= \lambda tT\E{X_{i0}} + \lambda \E{D} \E{\E{L_{A_{i0}} \mid \cF_{FG,i}} \int_0^{tT}  \p{W \le tT-u \mid \cF_{FG,i}} \diff u} \\
             &= \lambda tT\E{X_{i0}} + \lambda \E{D} \E{\E{L_{A_{i0}} \mid \cF_{FG,i}} \int_0^{1} tT  \p{W \le tTy \mid \cF_{FG,i}} \diff y} \\
             &= \lambda tT\E{X_{i0}} + \lambda tT \E{D} \E{\E{L_{A_{i0}} \mid \cF_{FG,i}} \p{W \le tTU \mid \cF_{FG,i}}}
        \end{align*}
        where $U \sim \text{Unif}[0,1]$, and the third equality holds by Fubini's theorem. 
        Rewriting the quantity of Assumption~\ref{assumption7} of Section~\ref{section:res}, upon noting that, generically (since clusters $i$ are i.i.d.) $\E{D} \equiv \E{X_0} + \E{D} \E{L_{A_0}}$ therein, which holds in the Hawkes case, and using a triangular inequality, 
        \begin{align*}
           &\sup_{0 \le t \le 1} \Bigg\lvert x_T^{-1}\Big(N_0(tT) \big( \E{X} +  \E{D} \E{L_{A_0}} \big) - \lambda t T \big(\E{X} + \E{D} \E{\E{L_{A_0} \mid \cF_{FG}} \p{W \le tTU \mid \cF_{FG}}}\Big) \Bigg\rvert \\
           &\quad \le \sup_{0 \le t \le 1} \Bigg\lvert x_T^{-1} \Big(N_0(tT) \E{X} - \lambda tT \E{X}  \Big) \Bigg\rvert \\
           &\qquad + \sup_{0 \le t \le 1} \Bigg\lvert x_T^{-1} \Big(N_0(tT) \E{D} \E{K_{A_0}} - \lambda tT \E{D} \E{L_{A_0}}  \Big) \Bigg\rvert \\
           &\qquad + \sup_{0 \le t \le 1} \Bigg\lvert  x_T^{-1} \lambda tT \E{D} \Big(\E{L_{A_0}} - \E{\E{L_{A_0} \mid \cF_{FG,i}} \p{W \le tTU  \mid \cF_{FG}}} \Big)\Bigg\rvert
        \end{align*}
which is strictly similar as in the corresponding expression obtained in Assumption 7 in the mixed Binomial case. Hence, the negligibility, for each $k \ge 0$ and all $\epsilon, r > 0$ of $$v'(x_T)^{k+1}\p{\T_1 + \T_2 + \T_3 > \epsilon, D_{(N-k)} > 2rx_T} = \smallO(1), \text{ as } T \rightarrow \infty$$ by similar reasoning as therein. The full proof is omitted for brevity. 
\end{enumerate}
Having verified all the assumptions of Section~\ref{section:res}, the proof of the corollary is complete. 
\end{proof}

\acks 
The authors would like to express their sincere gratitude to Prof. Dr. Valérie Chavez-Demoulin for insightful discussions and valuable input throughout the development of this work, and Juraj Bodik for careful reading of an earlier version of the paper. They also extend their thanks to the Wolfgang Pauli Institute in Vienna, and its director, Prof. Dr. Norbert Mauser, for providing a welcoming and stimulating environment in which significant progress on this paper was made.

\fund 
There are no funding bodies to thank relating to this creation of this article.

\competing 
There were no competing interests to declare which arose during the preparation or publication process of this article.


\bibliography{mybib}

\end{document}